\documentclass[11pt]{amsart}

\usepackage{amsmath}
\usepackage{amssymb}
\newtheorem{theorem}{Theorem}[section]
\newtheorem{claim}[theorem]{Claim}

\newtheorem{lemma}[theorem]{Lemma}

\newtheorem{proposition}[theorem]{Proposition}
\newtheorem{corollary}[theorem]{Corollary}

\theoremstyle{definition}
\newtheorem{definition}[theorem]{Definition}

\newtheorem{question}[theorem]{Question}

\theoremstyle{remark}
\newtheorem{remark}[theorem]{Remark}

\newtheorem{conjecture}[theorem]{Conjecture}

\newcount\skewfactor
\def\mathunderaccent#1#2 {\let\theaccent#1\skewfactor#2
\mathpalette\putaccentunder}
\def\putaccentunder#1#2{\oalign{$#1#2$\crcr\hidewidth
\vbox to.2ex{\hbox{$#1\skew\skewfactor\theaccent{}$}\vss}\hidewidth}}
\def\name{\mathunderaccent\tilde-3 }


\def\smallbox#1{\leavevmode\thinspace\hbox{\vrule\vtop{\vbox
   {\hrule\kern1pt\hbox{\vphantom{\tt/}\thinspace{\tt#1}\thinspace}}
   \kern1pt\hrule}\vrule}\thinspace}




\theoremstyle{definition}

\newtheorem{fact}[theorem]{Fact}

\theoremstyle{plain}

\newcommand{\ZFC}{\mathrm{ZFC}}
\newcommand{\ZF}{\mathrm{ZF}}
\newcommand{\AD}{\mathrm{AD}}

\newcommand{\fr}{{}^\frown}

\newcommand{\force}{\Vdash}

\newcommand{\la}{\langle}
\newcommand{\ra}{\rangle}
\newcommand{\elem}{\prec}
\newcommand{\uhr}{\restriction}

\newcommand{\power}{\mathcal{P}}

\newcommand{\ol}{\ol}
\newcommand{\po}{\mathbb{P}}
\newcommand{\wo}{\mathbb{W}}
\newcommand{\qo}{\mathbb{Q}}

\newcommand{\co}{\mathbb{C}}

\newcommand{\image}{"} 

\newcommand{\F}{\mathcal{F}}

\newcommand{\U}{\mathcal{U}}
\newcommand{\Ult}{\mathrm{Ult}}
\renewcommand{\ol}{\overline}

\renewcommand{\succ}{\suc}
\newcommand{\otp}{\mathop{\mathrm{ot}}}

\DeclareMathOperator{\dom}{dom}
\DeclareMathOperator{\cf}{cf}

\DeclareMathOperator{\suc}{succ}
\DeclareMathOperator{\GCH}{GCH}

\DeclareMathOperator{\cof}{Cof}
\DeclareMathOperator{\col}{Coll}

\DeclareMathOperator{\Sing}{Sing}

\DeclareMathOperator{\HOD}{HOD}

\DeclareMathOperator{\CUB}{CUB}

\def\l{{\langle}}
\def\r{{\rangle}}

\title{On $\omega$-Strongly Measurable Cardinals}

\author{Omer Ben-Neria}
\address{Institute of Mathematics,
 The Hebrew University of Jerusalem,
 Jerusalem 91904, Israel}
\email{omer.bn@mail.huji.ac.il}
\thanks{The first author was partially supported by the Israel Science Foundation Grant
1832/19}

\author{Yair Hayut}
\address{Institute of Mathematics,
 The Hebrew University of Jerusalem,
 Jerusalem 91904, Israel}
\email{yair.hayut@mail.huji.ac.il}
\thanks{
The second author research was partially supported by the Lise Meitner FWF grant 2650-N35 and the Israel Science Foundation Grant 1967/21}
\begin{document}
\begin{abstract}
We prove several consistency results concerning the notion of $\omega$-strongly measurable cardinal in $\HOD$. In particular, we show that is it consistent, relative to a large cardinal hypothesis weaker than $o(\kappa) = \kappa$, that every successor of a regular cardinal is $\omega$-strongly measurable in $\HOD$. 
\end{abstract} 

\maketitle

\noindent

\section{Introduction}

A prominent line of research in set theory is the study of the set theoretic universe $V$ (i.e., model of the axioms of set theory, $\ZFC$) by considering canonical inner model $M \subseteq V$ with additional strong features, which approximates $V$. 
The concept builds on the suggestion that if $M$ is sufficiently ``close'' to $V$, then some of the properties of $M$ may lift to $V$, and allow us to derive new consequences about models of set theory. \\
The prospects of this approach are demonstrated in  the theory of G\"{o}del's constructible universe $L \subseteq V$, and Jensen's Covering Theorem (\cite{JensenCovering}), which asserts that under the anti-large cardinal assumption of the nonexistence of $0^{\#}$, the covering property holds for $L \subseteq V$.\footnote{I.e., every set of ordinals $x \in V$ is contained in a set $y \in L$ such that $|y| \leq |x| + \aleph_1^V$.}
The combination of covering for $L \subseteq V$ together with the rigid structure of $L$, has been shown to have many implications both on cardinal arithmetic in $V$,\footnote{E.g., it implies that the Singular Cardinal Hypothesis ($\mathrm{SCH}$) holds in $V$} as well as on the existence of incompactness phenomena in $V$ such as an Abelian group $G \in V$ of size $\aleph_{\omega+1}$ which is not free, although every subgroup $H \triangleleft G$ of smaller cardinality is free. 

Jensen's Covering Lemma describes one side of a sharp dichotomy: If $0^{\#}$ does not exists $L$ covers subsets of $V$ successfully. By Silver, if $0^{\#}$ does exist, $L$ fails to approximate even the most basic features of $V$. For example, in the presence of $0^{\#}$, $L$ never computes successor cardinals correctly.

Jensen's Covering Lemma can be utilized in order to obtain a lower bound for the consistency strength of set theoretical statements, which not necessarily mention large cardinals. But it is quite restrictive: in order to be able to obtain various lower bounds, one has to construct canonical models which can accommodate stronger large cardinal axioms. The construction of such inner models is the subject of a prominent program in set theory, known as the Inner Model program. For a large cardinal property $\Phi$,\footnote{E.g., the existence of a cardinal $\kappa$ with a large cardinal property such as a measurable cardinal, a strong cardinal, a Woodin cardinal, or a supercompact cardinal.} 
 one would like to to construct a canonical $L$-like inner model $K$ which is maximal with respect to inner models which do not satisfy the large cardinal property $\Phi$ (see Schimmerling-Steel \cite{SchimmerlingSteelMaximality} for the precise statement). This maximality property couples with a covering lemma: assuming there is no inner model of $V$ with the property $\Phi$, $K$ approximates the universe $V$ by satisfying a certain covering property. For example, for $\Phi$ being the existence of $0^{\#}$, $K = L$. 

For stronger $\Phi$, the existence of such an inner model $K$ would allow us to extend Jensen's sharp dichotomy. Namely, either the large cardinal property $\Phi$ holds, or $V$ is close to $K$ and therefore inherits various combinatorial properties such as the existence of certain incompactness phenomena.

Starting at around the 1970s, inner models for increasing large cardinal properties $\Phi$ have been constructed. Starting from the seminal studies of Kunen, Silver, and Solovay on a model $L[U]$ with a measurable cardinal (\cite{Kunen1970},\cite{Silver1971}), extended by Dodd-Jensen (\cite{DoddJensen1981}) and Mitchell (\cite{Mitchell1984}), to large cardinal properties $\Phi$ involving coherent sequences of normal measures and many measurable cardinals. Then, following major developments and the introduction of iteration trees in Martin and Steel (\cite{MartinSteel1994}), Mitchell and Steel (\cite{MitchellSteel1994}), and Steel (\cite{Steel1996}), the theory was extended to the level of Woodin cardinals. 

The program took a significant turn after Woodin showed that there cannot be a single maximal inner model, in an absolute sense, past a Woodin cardinal. This has sparked new lines of study, involving forms of the $K^c$ construction (see Jensen, Schimmerling, Schindler, and Steel \cite{Stacking2009} and Andereta, Neeman and Steel \cite{Domestic2001}). 

Another seminal development was the introduction of The Core Model Induction method, first introduced by  Woodin and extensively developed by Steel, Schindler, Sargsyan, Trang and many others (\cite{CMI}). The method establishes new consistency results for stronger large cardinal properties by incorporation ideas form descriptive set theory with various local construction methods. The relevant large cardinal properties are often described in terms of expansions of the Axiom of Determinacy ($\AD$) in inner models $M$ of $\ZF$, and can be further translated to inner models of $\ZFC$ with large cardinal properties. 

First results on fine structural inner models for finite levels of supercompact cardinals were obtained by Neeman and Steel, \cite{neemansteel}, and by Woodin. It is still unknown whether similar constructions could lead to an inner model with a (full) supercompact cardinal, and some recent results of Woodin suggest that major obstructions appear past  the level of finite supercompactness, \cite{Woodin-Midrasha}. There are many excellent resources for the introduction of the inner model theory, the inner model program and its development. 
We refer the reader to \cite{JensenHistory, MitchellHistory, NeemanLongGames, SargysyanBSL, SchimerlingABC, SteelHB, Zeman}.

The inner model of Hereditarily Ordinal Definable sets (HOD) plays a significant role in many of the recent advancements in the Inner Model program. 

\begin{definition} 
Let $M$ be a model of set theory. A set $x \in M$ is hereditarily ordinal definable in $M$ if both $x$ and every set in the transitive closure of $x$ is definable in $V$ using some formula with ordinal parameters. The class of all hereditarily ordinal definable sets in a model $M$ is denoted by $\HOD^M$. 

We write $\HOD$ for $\HOD^V \subseteq V$.
\end{definition}

The class $\HOD$ was first introduced by G\"{o}del, and has been extensively studied (for example, see \cite{MyhillScott}). The study of $\HOD$ in inner models of strong forms of $\AD$ and the associated strategic-extender models plays a critical role in Descriptive Inner Model Theory. 
See \cite{SargsyanTrang2016, SargysyanTrangLSA, WoodinSteel2016}. 


In \cite{Woodin-SuitableExtendersI}, Woodin presents a new approach of addressing inner model problem for all large cardinals.
Woodin analyses the possible properties and limitations of some of the current methods, and introduces the seminal notions of a suitable extender model $N$ for a supercompact cardinal $\delta$ (in $V$), which in addition to several properties similar to well-known inner models, requires that $N$ captures witnessing  $\delta$-supercompact measures in $V$.\footnote{Namely, for all $\lambda > \delta$ there is a supercompact measure $U$ for $P_\delta(\lambda)$ such that $N \cap P_\delta(\lambda) \in U$ and $U \cap N \in N$.}
In a following work (see \cite{Woodin-Midrasha}), Woodin presents the  ``V = Ultimate-L'' axiom, to assert (roughly) that $\Sigma_2$-definable properties of the universe are satisfied in canonical strategic-extender models of the form $\HOD^{L(A,\mathbb{R})} \cap V_{\Theta}$, for some Universally Baire set $A \subseteq \mathbb{R}$.
 
 Combining the above notions, Woodin has formulated the ``Ultimate-L'' conjecture,  asserting that there exists a suitable extender model $N \subseteq \HOD$ 
which satisfies the axiom ``V = Ultimate-L''.
Following the search for some $N$, 
the theory established in \cite{Woodin-SuitableExtendersI} studies the possibility of $\HOD \subseteq V$ being a suitable extender model, and possible implication. 
For this, Woodin introduces a new assumption known as the \emph{HOD-conjecture}  (Conjecture \ref{conj-HOD} below) and shows that remarkably, if the $\HOD$-conjecture is true then a sufficiently strong large cardinal assumption (e.g., an extendible cardinal) guarantees a version of the covering lemma for $\HOD \subseteq V$. On the other hand, if the $\HOD$-conjecture fails in the presence of sufficiently large cardinals then $\HOD$ is very far from $V$, just like the smaller inner models. See Theorem \ref{thm:hod-dichotomy} for an exact formulation.

We remark that in general, the inner model $\HOD$ of an arbitrary model of $\mathrm{ZFC}$ can be easily modified by forcing. Nevertheless, it contains every canonical inner model and thus, the $\HOD$-conjecture might be a consequence of the covering theorem for some extremely large canonical inner model, \cite{WoodinUltimateL}. 
An appealing aspect of the $\HOD$-conjecture is that it is a combinatorial statement about the $\HOD$ and $V$, and does not rely on inner model theory. Even without any further development in the inner model program, Woodin established that the $\HOD$-conjecture poses many significant limitations on the consistency of large cardinals in the choice-less context. Moreover, large cardinals beyond choice, if consistent, form a hierarchy of failures of the $\HOD$-conjecture, \cite{BKW}.

The $\HOD$-conjecture centers around the notion of $\omega$-strongly measurable cardinals in $\HOD$. Not much was know about this notion, and previously, Woodin has raised the question (\cite{Woodin-Midrasha}) of whether more than three $\omega$-strongly measurables in $\HOD$ can exist. In this work, we study the notion of $\omega$-strongly measurable cardinals in $\HOD$, we prove several consistency results concerning this notion, and establish the consistency of a model where all successors of regular cardinal are $\omega$-strongly measurable in $\HOD$. 


\begin{definition}\label{def:omega.strongly.measurable}
 Let $\kappa$ be an uncountable regular cardinal, and $S$ a stationary subset of $\kappa$. 
 We say that $\kappa$ is strongly measurable in $\HOD$  with respect to $S$ if there exists some $\eta < \kappa$ such that
  $(2^\eta)^{\HOD} < \kappa$ and there is no partition $\l S_\alpha \mid \alpha < \eta\r \in \HOD$ of $S$ into sets, all stationary sets in $V$.
 We say that $\kappa$ is $\omega$-strongly measurable in $\HOD$ if it is strongly measurable in $\HOD$ with respect to the set  $S = \kappa \cap \cof(\omega)$, and that it is strongly measurable in $\HOD$ if it is strongly measurable in $\HOD$ with respect to $S = \kappa$. 
\end{definition}
In general, one might replace $\HOD$ with any other inner model of $V$, $M$, obtaining a meaningful notion of strong measurability in $M$. Since in this paper we will be interested solely in strong measurability in $\HOD$, we will occasionally omit the emphasis ``in $\HOD$'' and say simply that $\kappa$ is strongly measurable.  

It is shown in \cite{Woodin-SuitableExtendersI} that if $\kappa$ is an $\omega$-strongly measurable in $\HOD$ then there are stationary sets $S \subseteq \kappa \cap \cof(\omega)$ for which the restriction of the filter $\CUB_\kappa \uhr S$ to $\HOD$, forms a measure on $\kappa$ in $\HOD$. 
On the other hand, Woodin shows (\cite{Woodin-SuitableExtendersI}) that the existence of a class of regular cardinals which are not $\omega$-strongly measurable in $\HOD$, together with the existence of a $\HOD$-supercompact, implies that $\HOD$ satisfies many appealing approximation properties with respect to $V$. The results promote Woodin's $\HOD$-conjecture.

\begin{conjecture}[$\HOD$ conjecture, {\cite[Definition 191]{Woodin-SuitableExtendersI}}]\label{conj-HOD}
There is a proper class of regular uncountable cardinals $\kappa$ which are not
$\omega$-strongly measurable in $\HOD$.
\end{conjecture}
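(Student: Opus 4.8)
This is Woodin's HOD Conjecture, which remains open; what follows is a description of the routes along which a proof would presumably have to proceed, rather than a proof.

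The primary route is inner-model-theoretic, extending Jensen's dichotomy one level at a time. The plan would be: working in $\ZFC$ together with whatever large cardinals $V$ happens to carry, construct a canonical $L$-like inner model $K$ (necessarily $K \subseteq \HOD$, since $K$ is ordinal definable) together with a Jensen-style covering lemma for $K \subseteq V$; then observe that in the fine structure of $K$ one has strong combinatorial principles — weak forms of $\square$, and in particular, for every regular $\kappa$ above the large cardinals of $K$, a definable partition of $\kappa \cap \cof(\omega)$ into $\kappa$ many stationary pieces — so that no such $\kappa$ is $\omega$-strongly measurable in $K$; finally, transfer this upward using covering, so that the same $\kappa$ is not $\omega$-strongly measurable in $\HOD$, yielding a proper class of witnesses. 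The decisive difficulty is that this is precisely the Inner Model Program: no canonical $K$ with full covering is known once $V$ carries a supercompact (or more), and Woodin's own results indicate genuine new obstructions past finite supercompactness. One cannot sidestep this by an anti-large-cardinal hypothesis, because the conjecture must hold \emph{in the presence of} arbitrarily large cardinals — indeed it was designed to be compatible with, and to support, the Ultimate-$L$ program.

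A second route uses the HOD Dichotomy (Theorem \ref{thm:hod-dichotomy}): from a $\HOD$-supercompact cardinal, either the conjecture holds or $\HOD$ is extremely far from $V$. Under an extendible cardinal it would then suffice to refute the second alternative, i.e.\ to show that an extendible forces $\HOD$ to be close to $V$; but this again reduces to producing a canonical inner model $N \subseteq \HOD$ that captures the supercompactness measures and satisfies covering, which is the heart of the Ultimate-$L$ conjecture. Finally, a purely combinatorial attack seems blocked by the results of the present paper: since $\omega$-strong measurability turns out to be inexpensive — consistent at every successor of a regular cardinal from a hypothesis below $o(\kappa)=\kappa$ — no elementary counting or reflection argument can produce a proper class of non-$\omega$-strongly-measurable regulars, so either new inner model theory or a genuinely new global idea is required. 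In short, I expect the main obstacle to be exactly the construction of suitable canonical inner models at the level of supercompactness, and in practice the conjecture appears to be no easier than the Ultimate-$L$ program it was formulated to serve.
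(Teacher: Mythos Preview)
You have correctly identified that this is not a theorem but Woodin's open $\HOD$ Conjecture; the paper states it as Conjecture~\ref{conj-HOD} and offers no proof, so there is nothing to compare your proposal against. Your discussion of the inner-model-theoretic and dichotomy-based routes is reasonable expository commentary, and your observation that the paper's own consistency results (Theorem~\ref{THM1}) block any naive combinatorial attack is apt, but none of this constitutes a proof---nor does the paper claim one exists.
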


In light of the $\HOD$-conjecture, it is natural to attempt forming models with as many as possible $\omega$-strongly measurable cardinals in $\HOD$. 
Woodin has established the consistency (relative to large cardinals) of models with up to three $\omega$-strong measurable cardinals (see \cite[Remark 3.43]{Woodin-Midrasha}). 
The main purpose of this work is to prove that many strongly measurable cardinals can be obtained from relatively mild large cardinal assumption of hyper-measurability. 

\begin{theorem}\label{THM1}
It is consistent relative to the existence an inaccessible cardinal $\theta$ for which $\{ o(\kappa) \mid \kappa < \theta\}$ is unbounded in $\theta$,
that every successor of a regular cardinal is strongly measurable in $\HOD$.\end{theorem}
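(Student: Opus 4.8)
The plan is to obtain the model by a reverse Easton iteration over a ground model $V$ in which $\GCH$ holds (forcing $\GCH$ preserves the hypothesis) and in which we have fixed a coherent sequence $\vec U$ of measures witnessing that $\{o(\kappa)\mid\kappa<\theta\}$ is unbounded in $\theta$. The iteration $\mathbb{P}_\theta=\langle\mathbb{P}_\xi,\dot{\mathbb{Q}}_\xi\mid\xi<\theta\rangle$ has Easton support, and its nontrivial stages occur at an increasing sequence $\langle\kappa_i\mid i<\theta\rangle$ of ground-model measurable cardinals. At stage $\kappa=\kappa_i$ the forcing $\dot{\mathbb{Q}}_{\kappa_i}$ is a two-step iteration $\dot{\mathbb{S}}_{\kappa_i}*\dot{\mathbb{C}}_{\kappa_i}$: first a ``measure-coding'' forcing $\dot{\mathbb{S}}_{\kappa_i}$ built from the lift to $V[G_{\kappa_i}]$ of $\vec U\uhr o(\kappa_i)$ (this lift exists, and $\kappa_i$ remains measurable, since $|\mathbb{P}_{\kappa_i}|<\kappa_i$), and then a Levy collapse $\dot{\mathbb{C}}_{\kappa_i}$ with a regular critical point that collapses the interval below $\kappa_i$ so that $\kappa_i$ becomes the successor of an intended regular cardinal $\mu_i$. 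The $\kappa_i$ and the collapses are chosen recursively so that the cardinals of $V[G_\theta]$ below $\theta$ are exactly $\omega$, the $\kappa_i$, and the singular limits of the $\kappa_i$; in particular every successor of a regular cardinal of $V[G_\theta]$ equals some $\kappa_i$. For each $i$ we require $o(\kappa_i)$ to be at least a certain amount, itself bounded below $\theta$, depending on $\mu_i$; the existence of such $\kappa_i$ is exactly what the hypothesis provides, as the required amounts are cofinal in $\theta$ while $\{o(\kappa)\mid\kappa<\theta\}$ is unbounded in $\theta$.

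The forcing $\dot{\mathbb{S}}_\kappa$ (for $\kappa=\kappa_i$) is a Radin/Magidor-type forcing relative to $\vec U\uhr o(\kappa)$: it adds a club $C_\kappa\subseteq\kappa$ together with the side structure needed so that, after the subsequent collapse making $\kappa=\mu_i^+$, the nonstationary ideal on $\kappa$ in $V[G_\theta]$ is concentrated along $\vec U$ in the sense that \emph{every} subset of $\kappa$ lying in $\HOD^{V[G_\theta]}$ is, in $V[G_\theta]$, either co-nonstationary (i.e.\ contains a club) or nonstationary. The length $o(\kappa)$ of the measure sequence is calibrated precisely so that this sealing reaches all members of $\mathcal{P}(\kappa)\cap\HOD^{V[G_\theta]}$, which is why the required amount of Mitchell order grows with the position of $\kappa$ yet stays below $\theta$. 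Since the Levy collapses are weakly homogeneous and a suitable presentation of $\dot{\mathbb{S}}_\kappa$ is weakly homogeneous, and since $\mathbb{P}_\theta$ is an Easton-support iteration of such factors definable over $V$, an iterated-homogeneity argument shows $\HOD^{V[G_\theta]}\subseteq V$ and, crucially, pins down which subsets of $\kappa$ can appear in $\HOD^{V[G_\theta]}$. Together with the sealing this yields that the club filter $\CUB_\kappa$ of $V[G_\theta]$, restricted to $\mathcal{P}(\kappa)\cap\HOD^{V[G_\theta]}$, is an ultrafilter, and in fact (using that $\kappa=\mu_i^+$ is regular, so $<\kappa$-intersections of clubs are clubs) a $\kappa$-complete one, i.e.\ a measure of $\HOD^{V[G_\theta]}$.

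To conclude, fix $\eta$ small enough that $(2^\eta)^{\HOD^{V[G_\theta]}}<\kappa$ — for instance $\eta=2$, as $\kappa>4$, or $\eta=\omega$, since $\mathcal{P}(\omega)^{\HOD^{V[G_\theta]}}\subseteq\mathcal{P}(\omega)^V$ has $V$-cardinality at most $(2^\omega)^V<\kappa$. By the previous paragraph $\HOD^{V[G_\theta]}$ has no partition of $\kappa$ into two, a fortiori into $\eta$, stationary-in-$V[G_\theta]$ pieces, so $\kappa=\mu_i^+$ is strongly measurable in $\HOD$ with respect to $S=\kappa$ in the sense of Definition~\ref{def:omega.strongly.measurable}. (Alternatively one could target $S=\kappa\cap\cof(\omega)$ and invoke Woodin's characterization quoted above, but working directly with $S=\kappa$ is cleaner.) It remains to check the bookkeeping: the cardinal structure of $V[G_\theta]$ is as described, and the tails of the iteration past stage $\kappa_i$ are sufficiently closed and homogeneous that they add no new subsets of $\kappa_i$ to $\HOD^{V[G_\theta]}$, so the strong measurability established at stage $\kappa_i$ is not destroyed later. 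Since every successor of a regular cardinal of $V[G_\theta]$ is some $\kappa_i$, the theorem follows.

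The main obstacle is the design and verification of the measure-coding forcing $\dot{\mathbb{S}}_\kappa$ together with the computation of $\HOD^{V[G_\theta]}$. One must: (i) carry out a Prikry-type / geometric-condition analysis of $\dot{\mathbb{S}}_\kappa$ followed by the collapse, identifying exactly which subsets of $\kappa$ remain stationary and showing that the sealing is two-sided (stationarity of the large sets is preserved while the small sets become nonstationary in a way visible to $\HOD$, all while $\kappa$ stays regular); (ii) prove the key calibration lemma that $o(\kappa)$ measures suffice to seal every potential member of $\mathcal{P}(\kappa)\cap\HOD^{V[G_\theta]}$ — the quantitative statement that dictates the exact hypothesis on $\theta$; and (iii) control $\HOD^{V[G_\theta]}$ across a class-length iteration by iterated homogeneity, ensuring in particular that no definition combining the Radin clubs, the collapse generics and ordinal parameters at various stages can produce an unwanted partition at any $\kappa_i$. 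The coexistence of Radin-type genericity with the collapses, and the interplay of the closure of the tails with the homogeneity bookkeeping, is where the real work lies.
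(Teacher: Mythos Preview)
Your outline has the right flavor but contains a genuine gap in the target conclusion and in the iteration technology.

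\textbf{The ultrafilter claim is too strong.} You assert that after the sealing forcing every subset of $\kappa$ in $\HOD^{V[G_\theta]}$ is either nonstationary or contains a club, i.e.\ that $\CUB_\kappa$ restricted to $\HOD$ is an ultrafilter. The paper does \emph{not} achieve this for $\kappa>\omega_1$; it is stated there as an open question. What the paper actually proves (Theorem~\ref{Thm-1stmeas}) is weaker: after the club forcing $\mathbb{C}_{\mathcal{F}_\kappa}$, a $V$-set $X\subseteq\kappa$ is stationary iff $X\in U_{\kappa,\tau}$ for \emph{some} $\tau\le\lambda$. Since sets $A\in U_{\kappa,0}\setminus U_{\kappa,1}$ and $\kappa\setminus A\in U_{\kappa,1}\setminus U_{\kappa,0}$ both lie in $V\supseteq\HOD^{V[G_\theta]}$ and both remain stationary, the club filter is not an ultrafilter on $\HOD$. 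Strong measurability is obtained instead via Lemma~\ref{lem:sufficient.omega.strongly}: the club filter on $\HOD$ is the intersection of the $\lambda+1$ normal measures $U_{\kappa,\tau}$, so no $\HOD$-partition of $\kappa$ into more than $\lambda+1$ stationary pieces exists. Your ``calibration lemma (ii)'' as stated would therefore fail.

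\textbf{The iteration support.} You propose an Easton-support iteration of Radin-type forcing followed by a collapse. But the essential component---the club forcing diagonalizing $\mathcal{F}_\kappa$---is only $\kappa$-distributive (Proposition~\ref{prop1}), not $\kappa$-closed, and distributivity is not in general preserved by Easton-support iteration. The paper devotes all of Section~\ref{Sec-PrikryEmbedding} to solving exactly this problem: it embeds $\mathbb{C}_{\mathcal{F}_\kappa}$ into a Prikry-type poset $\bar{\mathbb{C}}_{\mathcal{F}_\kappa}$ whose direct extension order is $\lambda$-closed, so that a \emph{Magidor} iteration (not Easton) of the resulting Prikry-type blocks $\mathbb{Q}[\mathcal{U}^\alpha]=\mathbb{P}^{\mathcal{U}^\alpha}\ast\col(\kappa_\alpha,{<}\kappa_{\alpha+1})\ast\bar{\mathbb{C}}_{\mathcal{F}_{\kappa_{\alpha+1}}}$ can be formed and shown cone homogeneous via Lemma~\ref{FACT:homogiter}. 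Your sketch provides no substitute for this step. Note also the order of the components: the cofinality-changing iteration $\mathbb{P}^{\mathcal{U}^\alpha}$ (needed in light of Lemma~\ref{lem:necessary.omega.strongly}) comes \emph{before} the collapse, and the club forcing comes last; your ordering ``measure-coding then collapse'' omits the club forcing entirely or merges it into an unspecified $\dot{\mathbb{S}}_\kappa$.
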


Cummings, Friedman, and Golshani (\cite{CFG}) have established the consistency of a model where $(\alpha^+)^{\HOD} < \alpha^+$ for every infinite cardinal $\alpha$.
In \cite[Theorem 2.2]{GitikMerimovich}, Gitik and Merimovich prove that it is consistent relative to large cardinals that every regular uncountable cardinal is measurable in $\HOD$. A similar result is obtained using a different technique in \cite[Theorem 1.4]{bunger}. Perhaps, more related to our work is \cite[Theorem 1.3]{bunger}, in which a club of cardinals which are measurable in $\HOD$ is obtained from a large cardinal axiom weaker than $o(\kappa) = \kappa$. 
In those models there are no $\omega$-strongly measurable successor cardinals. 

We note that these results do not apply to models where there is an extendible cardinal. The existence of an extendible cardinal in $V$ derives a sharp dichotomy between $\HOD$ being either very close or very far from $V$, as shown by Woodin's $\HOD$-Dichotomy Theorem (\cite{Woodin-SuitableExtendersI}).

\begin{theorem}[The $\HOD$-Dichotomy, Woodin, {\cite{WJD-HOD-Dichotomy-Survey}}]\label{thm:hod-dichotomy}
Let $\delta$ be an extendible cardinal. Then one of the following holds:
\begin{enumerate}
\item Every cardinal $\eta$ above $\delta$ which is singular in $V$, is singular in $\HOD$ and 
$(\eta^+)^{\HOD} = \eta^+$.
\item Every regular cardinal above $\delta$ is $\omega$-strongly measurable in $\HOD$.
\end{enumerate}
\end{theorem}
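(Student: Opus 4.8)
The dichotomy splits on a trivial case distinction: either every regular cardinal above $\delta$ is $\omega$-strongly measurable in $\HOD$ --- which is exactly clause~(2) --- or there is a regular $\gamma_0 > \delta$ which is not. All the content of the theorem lies in showing that in the second case clause~(1) must hold. The plan is to deduce, in that case, that $\HOD$ is a \emph{weak extender model for the supercompactness of $\delta$}, i.e.\ that for every $\lambda > \delta$ there is a $\delta$-complete, normal, fine ultrafilter $U$ on $\power_\delta(\lambda)$ with $\power_\delta(\lambda) \cap \HOD \in U$ and $U \cap \HOD \in \HOD$; and then to read off clause~(1) from Woodin's structure theory for such models. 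Throughout I would use that an extendible $\delta$ is supercompact and, for every $\alpha$, admits an elementary embedding $j \colon V_\alpha \to V_\beta$ with $\crit(j) = \delta$ and $j(\delta) > \alpha$.

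The first step is a reflection. For $\gamma$ below a $\Sigma_2$-correct cardinal, the statement ``$\gamma$ is a regular cardinal that is not $\omega$-strongly measurable in $\HOD$'' is expressible over $V_\alpha$: this uses the $\Sigma_2$-definability of $\HOD$ (so that $\HOD^{V_\alpha} = \HOD \cap V_\alpha$) and the fact that the relevant quantifiers --- over $\eta < \gamma$ with $(2^\eta)^{\HOD} < \gamma$, over candidate partitions lying in $\HOD$, and over clubs of $V$ --- are bounded. Applying an embedding $j \colon V_\alpha \to V_\beta$ with $\crit(j) = \delta$, $j(\delta) > \alpha > \gamma_0$, and $\beta$ again $\Sigma_2$-correct, the image $j(\gamma_0) > \alpha$ is a regular cardinal that is still not $\omega$-strongly measurable in $\HOD$. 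Letting $\alpha$ grow, one obtains a proper class of regular cardinals above $\delta$ that fail to be $\omega$-strongly measurable in $\HOD$; in particular the $\HOD$-conjecture holds.

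The second, and central, step is the capturing construction. Fix a regular $\gamma > \delta$ that is not $\omega$-strongly measurable in $\HOD$ and, unfolding the definition, a partition $\langle S_a \mid a < \eta \rangle \in \HOD$ of $\gamma \cap \cof(\omega)$ into $V$-stationary sets, with $(2^\eta)^{\HOD} < \gamma$ and $\eta \geq \delta$. Take a $\gamma$-supercompactness embedding $j \colon V \to M$ with $\crit(j) = \delta$; by the $\Sigma_2$-definability of $\HOD$ one has $j(x) \in \HOD^M$ for all $x \in \HOD$, and $\HOD^M$ agrees with $\HOD$ on the sets that matter. Define $U$ on $\power_\delta(\gamma)$ by $X \in U \iff j[\gamma] \in j(X)$, where $j[\gamma]$ denotes the pointwise image; then $U$ is automatically $\delta$-complete, normal and fine, and $\power_\delta(\gamma) \cap \HOD \in U$. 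The essential point is that $U \cap \HOD$ can be computed \emph{inside} $\HOD$: one uses the image partition $j(\langle S_a \rangle)$ together with the $\HOD$-clubs witnessing $V$-stationarity of the $S_a$ to decide, in an ordinal-definable way, which $X \in \HOD$ belong to $U$ --- this is Woodin's capturing argument. Projecting $U$ down to every smaller level $\power_\delta(\lambda')$, and running the construction at regular $\gamma$'s above any prescribed bound (available by the reflection step), produces a captured supercompactness ultrafilter at every level $> \delta$; so $\HOD$ is a weak extender model for the supercompactness of $\delta$. Woodin's covering theorem for weak extender models then gives that $\HOD$ has the $\omega$-covering property above $\delta$, whence every $V$-singular $\eta > \delta$ is singular in $\HOD$ and satisfies $(\eta^+)^{\HOD} = \eta^+$ --- clause~(1).

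The hard part is the capturing construction of the second step: extracting, from a single $\HOD$-partition of $\gamma \cap \cof(\omega)$ into $V$-stationary pieces, a supercompactness ultrafilter for $\delta$ that is amenable to $\HOD$. This is where extendibility of $\delta$, rather than mere supercompactness, does genuine work --- one needs reflection strong enough to transport the ordinal-definable partition data --- and where the delicate bookkeeping between $\HOD$, $\HOD^M$ and the approximations $\HOD \cap V_\alpha$, all reconciled through the $\Sigma_2$-definability of $\HOD$, has to be handled with care. By contrast, the covering theorem for weak extender models invoked at the end, though itself a substantial Jensen-style condensation argument, is by now a standard tool; I would cite \cite{Woodin-SuitableExtendersI} and \cite{WJD-HOD-Dichotomy-Survey} for both ingredients rather than reconstruct them here.
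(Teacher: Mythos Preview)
The paper does not prove this theorem. It is quoted in the introduction as background, with attribution to Woodin and a citation to \cite{WJD-HOD-Dichotomy-Survey}; no argument is supplied. So there is no ``paper's own proof'' to compare against.

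Your outline is a fair high-level sketch of Woodin's actual proof: the trivial dichotomy, the reflection step producing a proper class of non-$\omega$-strongly-measurable regulars via extendibility and the $\Sigma_2$-definability of $\HOD$, the capturing construction turning a single $\HOD$-partition at such a $\gamma$ into a supercompactness measure amenable to $\HOD$, and finally the covering theorem for weak extender models. A couple of small inaccuracies: the clause ``$\eta \geq \delta$'' is not part of the hypothesis and does not obviously follow from the definition (what one has is $(2^\eta)^{\HOD} < \gamma$); and the description of the capturing mechanism --- how the partition data lets $\HOD$ compute $U \cap \HOD$ --- is left entirely as a black box. You acknowledge this yourself in the last paragraph, effectively reducing your proof to a citation of \cite{Woodin-SuitableExtendersI} and \cite{WJD-HOD-Dichotomy-Survey}, which is exactly what the paper does.
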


\vskip\medskipamount

In the last part of this work, we prove a consistency result regarding strong measurability at successors of singular cardinals. 
Woodin (\cite{Woodin-SuitableExtendersI}) establishes the consistency of a successor of a singular cardinal $\lambda$, which is $\omega$-strong measurable cardinal in $\HOD$, from the large cardinal assumption $I_0$.
Here, we prove a weaker consistency result from a weaker large cardinal assumption. 

\begin{theorem}\label{thm:singular.strong.meas}
Suppose that $\kappa < \lambda$ are cardinals, $\kappa$ is $\lambda$-supercompact and $\lambda$ is measurable. Then, there is a generic extension in which $\kappa$ is a singular cardinal of cofinality $\omega$, and  $\lambda = \kappa^+$  is
strongly measurable in $\HOD$ with respect to $S$, for some stationary subset 
$S \subseteq \lambda \cap \cof(\omega)$.
\end{theorem}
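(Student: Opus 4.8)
The plan is to combine a Prikry-type collapse making $\kappa$ singular of cofinality $\omega$ with $\lambda=\kappa^+$, together with a club-shooting iteration which, relative to $\HOD$, converts a normal measure on $\lambda$ into the restriction of the club filter to $\lambda\cap\cof(\omega)$. To begin, I would force over $V$ to arrange that $V=\HOD$ and $\GCH$ hold, while preserving that $\kappa$ is $\lambda$-supercompact and $\lambda$ is measurable; a standard preparation does this, using a Laver-style preparation (or direct embedding lifts under $\GCH$) for $\kappa$ and Levy--Solovay-type arguments for $\lambda$. Fix a normal measure $W$ on $\lambda$ and put $S:=\{\alpha<\lambda : \cf^{V}(\alpha)=\omega \text{ or } \cf^{V}(\alpha)\geq\kappa\}$. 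Since $W$ concentrates on regular cardinals above $\kappa$, we have $S\in W$, and in particular $S$ is stationary.

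Next I would force with $\mathbb{P}=\mathbb{P}_0*\dot{\mathbb{P}}_1$. Here $\mathbb{P}_0$ is a Prikry-type forcing at $\kappa$ --- for instance $\col(\kappa,<\lambda)$ followed by Prikry forcing at $\kappa$, which exists because $\kappa$ stays measurable after the $\kappa$-directed-closed collapse --- arranged so that $\cf(\kappa)=\omega$ and $\lambda=\kappa^+$ in $V[\mathbb{P}_0]$. Since $\col(\kappa,<\lambda)$ turns every $V$-regular $\alpha\in(\kappa,\lambda)$ into an ordinal of cofinality $\kappa$, which the Prikry sequence then makes $\omega$-cofinal, and since neither factor adds bounded subsets of $\kappa$, one checks that $S$ coincides with $\lambda\cap\cof(\omega)$ in $V[\mathbb{P}_0]$; moreover both factors are $\lambda$-c.c., so every member of $W$ (in particular $S$) remains stationary. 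Then $\dot{\mathbb{P}}_1$ names a $(<\lambda)$-support iteration which, running over an enumeration of $\{Y\in V : Y\subseteq S,\ Y\notin W\}$, shoots at stage $Y$ a closed unbounded subset of $\lambda\setminus Y$; it is designed to be $\kappa$-distributive, not to collapse $\lambda$, and to keep every member of $W$ stationary. Both $\mathbb{P}_0$ and $\dot{\mathbb{P}}_1$ are definable from ground-model parameters and homogeneous enough that every subset of $\lambda$ lying in $\HOD^{V[\mathbb{P}]}$ already lies in $\HOD^{V}=V$.

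Working in $V[\mathbb{P}]$: we have $\cf(\kappa)=\omega$, $\lambda=\kappa^+$ is regular, and $S=\lambda\cap\cof(\omega)$ is a stationary element of $\HOD$. If $X\in\HOD$ with $X\subseteq S$ then $X\in V$; if $X\notin W$ then $X$ was made nonstationary by $\dot{\mathbb{P}}_1$, and if $X\in W$ then $S\setminus X\notin W$ (since $S\in W$ and $W$ is an ultrafilter on $\mathcal{P}(\lambda)^{V}$), so $S\setminus X$ was made nonstationary. Hence $S$ cannot be split in $\HOD$ into two sets that are both stationary in $V[\mathbb{P}]$; as $(2^{2})^{\HOD}<\lambda$, this is precisely the assertion that $\lambda=\kappa^+$ is strongly measurable in $\HOD$ with respect to $S$ (with $\eta=2$).

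The main obstacle is the analysis of $\dot{\mathbb{P}}_1$: shooting clubs through the complements of all $V$-non-measure-one subsets of $\lambda\cap\cof(\omega)$ must not collapse $\lambda$ (nor any $V$-cardinal $\le\kappa$, nor alter $\cf(\kappa)$) and must not destroy the stationarity of any member of $W$ --- and in general club-shooting through the complement of a stationary subset of $\cof(\omega)$ is far from harmless. This is exactly where the $\lambda$-supercompactness of $\kappa$ enters: with a careful choice of the support of the iteration, one establishes the required distributivity and chain-condition properties through a reflection argument using an embedding $j\colon V\to M$ with ${}^{\lambda}M\subseteq M$, and one exploits that each complement $\lambda\setminus Y$ being shot through contains a set in the $(<\kappa)$-complete filter generated by $W$, which keeps $W$-positive sets stationary throughout the iteration.
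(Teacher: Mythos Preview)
Your proposal has a genuine gap at exactly the point the paper singles out as the chief obstacle: the homogeneity of your $\mathbb{P}_0$. You take $\mathbb{P}_0=\col(\kappa,<\lambda)*\text{Prikry}(\dot U)$, where $\dot U$ is a name for a normal measure on $\kappa$ that exists only \emph{after} the collapse. Such a $\dot U$ is not definable from ground-model ordinals, so the Prikry component is not ordinal definable in $V$, and the standard cone-homogeneity argument (Lemma~\ref{corr: OD implies homogeneous} / Fact~\ref{fact:Levy}) does not yield $\HOD^{V[\mathbb{P}]}\subseteq V$. The paper states this explicitly at the start of Section~\ref{Sec:suc.of.sing}: ``the choice of the measure on $\kappa$ depends on the generic filter for the Levy collapse and might lead to a Prikry generic sequence which will introduce to $\HOD$ information about the collapse of $\lambda$ to $\kappa^{+}$.'' Your assertion that ``both $\mathbb{P}_0$ and $\dot{\mathbb{P}}_1$ are definable from ground-model parameters and homogeneous enough'' is therefore unjustified for $\mathbb{P}_0$. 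The paper circumvents this by using Merimovich's supercompact extender-based forcing $\mathbb{P}_E$, which is determined by a single ground-model extender $E$ and is weakly homogeneous (Gitik--Merimovich), and which simultaneously collapses the interval $[\kappa,\lambda)$ and singularizes $\kappa$.

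A secondary but significant divergence is your $\dot{\mathbb{P}}_1$: you propose a $(<\lambda)$-support iteration of length $\lambda^{+}$ of club shootings through complements of all $W$-null subsets of $S$, and you concede that the preservation analysis is ``the main obstacle'' without supplying it. The paper avoids this entirely by forcing once with $\co_{\bar{\mathcal U}}$, where $\bar{\mathcal U}=\{\Sing\cup A:A\in\mathcal U\}$; a single generic club almost contained in every $\Sing\cup A$ already makes every $V$-set $X\subseteq(S^{\lambda}_{\mathrm{reg}})^{V}$ with $X\notin\mathcal U$ nonstationary. The required distributivity and stationarity-preservation are then proved directly (Lemmas in Section~\ref{Sec:suc.of.sing}) via the ``properness''-type Lemma~\ref{lem:extender.properness} for $\mathbb{P}_E$, with no iteration and no appeal to a reflection argument of the kind you sketch. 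Even granting your $\mathbb{P}_1$ could be made to work, it would be substantially more delicate than what the theorem actually needs.
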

\vskip\bigskipamount
\emph{A brief summary of this paper}. In \textbf{section \ref{section:strong measurability}} we review some basic facts about strong measurability which will be central in the proof of the main theorem.  
In the following sections, we gradually develop the forcing methods used to prove our main results (Theorems \ref{THM1} and \ref{thm:singular.strong.meas}): 
In \textbf{section \ref{Sec-omega1}} we show how to obtain a model where $\omega_1$ is strongly measurable in $\HOD$ starting with a single measurable cardinal. The case of $\kappa = \omega_1$ is different from the general case as it does not require incorporating posets for changing cofinalities. It can also be seen as a warm-up for the general case. 
In \textbf{section \ref{Sec-Cforcing}} we further develop the ideas from the previous section and combine them with a suitable iteration for changing cofinalities. As a result, we establish the consistency of a strongly measurable cardinal which is a successor of an arbitrary regular cardinal $\lambda$, from the large cardinal assumption of $o(\kappa) = \lambda+1$. 
In \textbf{section \ref{Sec-PrikryEmbedding}} we introduce a method to construct a Prikry-type poset which is equivalent to the forcing from the previous section, and has a direct extension order that is $\lambda$-closed. This is utilized in 
\textbf{section \ref{Sec-Many}} to form iterations of the single cardinal forcing, thus obtaining models with many strongly measurable cardinals.
In \textbf{section \ref{Sec:suc.of.sing}} we prove our theorem concerning successors of singular cardinals. The results of this section do not depend on the other sections past our preliminaries. 

In the appendix we cite and prove 
some useful results related to
homogeneous forcings and their iterations (including Prikry type forcings), 
and homogeneous iterations for changing cofinalities.

Our notations are mostly standard. We follow the Jerusalem forcing convention in which for two conditions $p,p'$ in a poset $\po$, the fact $p'$ is stronger (more informative) than $p$ is denoted by $p' \geq p$. 

\section{Variations of Strong Measurability}\label{section:strong measurability}
We start with several observations concerning a natural generalization of the notion of $\omega$-strongly measurability. 
\begin{definition}
Let $S \subseteq \kappa$, $S \in \HOD$ stationary and $\eta$ a cardinal in $\HOD$. $\kappa$ is $(S,<\eta)$-strongly measurable if there is no partition in $\HOD$ of $S$ into $\eta$ many disjoint stationary sets. $\kappa$ is $(S,\eta)$-strongly measurable if it is $(S, <(\eta^{+})^{\HOD})$-strongly measurable. 
\end{definition}
\begin{definition}
A cardinal $\kappa$ is $S$-strongly measurable if $S \in \HOD$ and $\kappa$ is $(S, <\eta)$-strongly measurable for some $\eta$ such that $(2^\eta)^{\HOD} < \kappa$. We say that $\kappa$ is strongly measurable if $\kappa$ is $\kappa$-strongly measurable.
\end{definition}
Therefore a cardinal $\kappa$ is $\omega$-strongly measurable if it is $(S^{\kappa}_{\omega},\eta)$-strongly measurable for $\eta$ such that $\left(2^\eta\right)^{\HOD} < \kappa$. Note that if $S \subseteq T$ are stationary subsets of $\kappa$ in $\HOD$ and $\kappa$ is $T$-strongly measurable then it is $S$-strongly measurable. In particular, every strongly measurable cardinal is $\omega$-strongly measurable.

\begin{theorem}[Woodin]
Let $\delta$ be an extendible cardinal.
Then the following are equivalent:
\begin{enumerate}
\item There is a regular cardinal $\kappa \geq \delta$ which is not $\omega$-strongly measurable.
\item There is a regular cardinal $\kappa \geq \delta$ which is not $(S, \delta)$-strongly measurable for some $S \in \HOD$ which consists of singular ordinals of cofinality $<\delta$. 
\item The $\HOD$-conjecture.
\item There is no regular $\omega$-strongly measurable cardinal above $\delta$. 
\end{enumerate}
\end{theorem}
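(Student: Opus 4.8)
The plan is to run all four statements through Woodin's $\HOD$-Dichotomy (Theorem~\ref{thm:hod-dichotomy}), after first observing that its two alternatives are mutually exclusive; to keep notation apart I will call them \emph{case~(A)} (every $V$-singular cardinal above $\delta$ is singular in $\HOD$ with its successor computed correctly) and \emph{case~(B)} (every regular cardinal above $\delta$ is $\omega$-strongly measurable in $\HOD$). Three of the required implications are immediate. $(4)\Rightarrow(3)$: the class of regular cardinals above $\delta$ is a proper class of regular uncountable cardinals, all non-$\omega$-strongly measurable under (4), so the $\HOD$-conjecture holds. $(4)\Rightarrow(1)$ and $(3)\Rightarrow(1)$: any proper class of regular cardinals has members $\ge\delta$. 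So the substantive tasks are $(1)\Rightarrow(4)$ and fitting (2) into the cycle.

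For $(1)\Rightarrow(4)$, assume there is a regular $\kappa\ge\delta$ that is not $\omega$-strongly measurable. The first move is a reflection: by Definition~\ref{def:omega.strongly.measurable}, ``$\mu$ is not $\omega$-strongly measurable'' is a statement universally quantified over the auxiliary parameter $\eta$, hence preserved upward by elementary maps. For an arbitrarily large, suitably chosen $\lambda>\kappa$ (so that $\HOD\cap V_\lambda$ computes the relevant power sets and partitions, $\HOD^{V_\lambda}=\HOD\cap V_\lambda$, and stationarity of subsets of ordinals below $\lambda$ is absolute between $V_\lambda$ and $V$), extendibility gives an elementary $j\colon V_\lambda\to V_\beta$ with $\crit(j)=\delta$ and $j(\delta)>\lambda$; since $\kappa$ is $\delta$ or a cardinal $\ge\delta^+$ we always get $j(\kappa)>\lambda$, and elementarity makes $j(\kappa)>\delta$ a regular cardinal that is again not $\omega$-strongly measurable. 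Varying $\lambda$, this produces a proper class of non-$\omega$-strongly measurable regular cardinals above $\delta$ — which already re-derives (3) — and in particular case~(B) of the dichotomy fails. Moreover cases (A) and (B) are mutually exclusive: if some regular $\nu>\delta$ were $\omega$-strongly measurable, then by the fact of Woodin quoted after Definition~\ref{def:omega.strongly.measurable} the club filter restricted to a suitable stationary $S\subseteq\nu\cap\cof(\omega)$ generates a measure on $\nu$ in $\HOD$, making $\nu$ measurable and hence a limit cardinal in $\HOD$; so case~(B) would force $(\eta^+)^{\HOD}<\eta^+$ for every cardinal $\eta\ge\delta$, contradicting case~(A). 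Hence, as case~(B) fails, Theorem~\ref{thm:hod-dichotomy} delivers case~(A).

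It remains to upgrade case~(A) to the full statement~(4), and this is where I expect the main difficulty to be. Here I would invoke the core of Woodin's analysis (see \cite{Woodin-SuitableExtendersI}): under extendibility of $\delta$, case~(A) is equivalent to $\HOD$ being a weak extender model for the supercompactness of $\delta$, and for such a model one obtains, via the $\delta$-supercompactness embeddings $j_U\colon V\to M$ (which are continuous at every ordinal of cofinality $<\delta$), that for every regular $\mu>\delta$ the cofinalities $<\delta$ below $\mu$ agree between $\HOD$ and $V$ (so $\mu\cap\cof(\omega)\in\HOD$) and every set in $\HOD$ that is a subset of $\mu\cap\cof(\omega)$ and stationary in $\HOD$ is already stationary in $V$. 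Granting this, fix a regular $\mu>\delta$; since $\mu$ is regular in $\HOD$, Solovay's theorem applied inside $\HOD$ splits $\mu\cap\cof(\omega)$ into $\mu$ many sets stationary in $\HOD$, hence stationary in $V$, and coarsening exhibits, for every $\eta<\mu$, a $\HOD$-partition of $\mu\cap\cof(\omega)$ into $\eta$ many $V$-stationary sets. So no regular cardinal above $\delta$ is $\omega$-strongly measurable, which is (4). The routine parts are the dichotomy bookkeeping and the elementarity argument; the hard part is reproving, or carefully citing, the identification of case~(A) with weak-extender-model-hood for $\delta$ and the resulting preservation of stationarity for cofinality-$<\delta$ subsets below regular cardinals.

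Finally I would place (2), the natural ``cofinality-$<\delta$'' reformulation of the same phenomenon. For $(1)\Rightarrow(2)$: by the above, (1) yields case~(A) of the dichotomy, hence the weak-extender-model picture and $(\delta^+)^{\HOD}=\delta^+$, so taking $\kappa=\delta^+$ (regular, $>\delta$) and $S=\delta^+\cap\cof(\omega)\in\HOD$ — a stationary set of ordinals of cofinality $\omega<\delta$ — the Solovay partition inside $\HOD$ together with preservation of stationarity splits $S$ into $\delta^+\ge(\delta^+)^{\HOD}$ many $V$-stationary sets in $\HOD$, so $\kappa$ is not $(S,\delta)$-strongly measurable. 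For $(2)\Rightarrow(1)$: given such a $\kappa$ and $S$, a pigeonhole over the at most $\delta$ many cofinalities occurring stationarily in $S$ reduces to a stationary $S'\subseteq\kappa\cap\cof(\mu^*)$ with $\mu^*<\delta$ that carries $(\delta^+)^{\HOD}$ many disjoint $V$-stationary subsets in $\HOD$; then one argues — again from Woodin's analysis, and after reflecting $\kappa$ above $\delta$ by an extendibility embedding, which preserves cofinalities $<\delta$ and the splitting — that this is incompatible with case~(B) of the dichotomy, under which every regular cardinal above $\delta$ carries in $\HOD$ an ultrafilter, complete enough and concentrated on a stationary subset of $\cof(\mu^*)$, that makes such a splitting impossible. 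Hence case~(B) fails, case~(A) holds, and with it (1), (3) and (4), closing the cycle $(1)\Leftrightarrow(2)\Leftrightarrow(3)\Leftrightarrow(4)$.
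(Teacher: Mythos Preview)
The paper does not give its own proof of this theorem: it is stated as a result of Woodin and the ``proof'' in the paper is a one-line citation, ``For the proof see \cite[Theorems 197, 212, 213]{Woodin-SuitableExtendersI}.'' So there is no argument in the paper to compare your sketch against; you are attempting to reconstruct Woodin's proof, and the paper simply defers to the original source.

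Your outline is broadly faithful to how Woodin's argument runs. The trivial implications $(4)\Rightarrow(3)\Rightarrow(1)$ are fine, and your reflection of a single non-$\omega$-strongly-measurable cardinal to a proper class via extendibility is the right move (though the absoluteness bookkeeping for $\HOD^{V_\lambda}$ versus $\HOD\cap V_\lambda$ on both the source and target side of $j$ needs more care than you indicate). Your identification of the ``hard part'' is exactly right: the passage from case~(A) of the dichotomy to the full statement~(4) goes through $\HOD$ being a weak extender model for the supercompactness of $\delta$ and the resulting preservation of stationarity for sets of small cofinality --- but this is precisely the content of the Woodin theorems the paper cites, so at that point your sketch collapses back to the same citation.

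The one place where your sketch has a genuine gap rather than just a deferred citation is $(2)\Rightarrow(1)$. Your pigeonhole reduction to a single cofinality $\mu^*<\delta$ is not justified: you have a $\HOD$-partition of $S$ into $(\delta^+)^{\HOD}$ many $V$-stationary pieces, but a priori $(\delta^+)^{\HOD}$ could be $\le\delta$, so pigeonhole over the $<\delta$ many cofinalities need not produce $(\delta^+)^{\HOD}$ many pieces concentrating on one $\mu^*$. More seriously, even granting that, the claimed contradiction with case~(B) does not follow from the definition: $\omega$-strong measurability constrains only partitions of $\kappa\cap\cof(\omega)$, and says nothing directly about $\kappa\cap\cof(\mu^*)$ for $\mu^*\neq\omega$. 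In Woodin's actual argument this implication is again routed through the weak extender model characterisation (and the $\delta$-cover/$\delta$-approximation properties), not through a direct clash with the definition of $\omega$-strong measurability.
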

For the proof see \cite[Theorems 197, 212, 213]{Woodin-SuitableExtendersI}.
Without the assumption that the ordinals of $S$ have fixed $V$-cofinality the equivalence might fail. 

The next result provides a necessary and sufficient condition for a cardinal $\kappa$ to be $\omega$-strongly measurable in $\HOD$. 
This observation will guide us in devising the main forcing construction, which will be used to prove theorem \ref{THM1}.

\begin{lemma}\label{lem:sufficient.omega.strongly}
A cardinal $\kappa$ is strongly measurable with respect to $S \in \HOD$ if and only if
$\kappa$ is an inaccessible cardinal in $\HOD$ and the restriction of the club filter on $S$ to $\HOD$ is the intersection of  $\eta$ normal measures from $\HOD$, $\l U_{\kappa,i} \mid i < \eta\r \in \HOD$, for some $\eta < \kappa$. 
\end{lemma}
\begin{proof}
For the backwards implication, since $\kappa$ is inaccessible in $\HOD$ and $\eta < \kappa$,  $\left(2^\eta\right)^{\HOD} < \kappa$.

Let $\langle T_\alpha \mid \alpha < (\eta^+)^{\HOD}\rangle\in \HOD$ be a decomposition of $S$ into stationary sets. By the assumption, for each $\alpha$ there is a measure $U_{\kappa,i}$ in $\HOD$ such that $T_\alpha \in U_{\kappa,i}$. Since the sets $T_\alpha$ are pairwise disjoint, it is impossible for $\alpha \neq \beta$ to belong to the same $U_{\kappa,i}$. Thus, we obtain an injective function from $(\eta^+)^{\HOD}$ to $\eta$ in $\HOD$---a contradiction.

Let us assume now that $\kappa$ is strongly measurable with respect to $S \in \HOD$. In particular, $\kappa$ is inaccessible in $\HOD$. Let $\mathcal{S} \in \HOD$ be a maximal collection of pairwise disjoint stationary subsets of $S$, in $\HOD$, such that for all $T \in \mathcal S$, the club filter restricted to $T$ is an ultrafilter in $\HOD$. Let us denote this ultrafilter by $U_T$. Since this collection is a partition of $S$ into stationary sets, $|\mathcal S| < \kappa$.

If $\bigcap \{U_T \mid T \in \mathcal S\}$ is not the club filter restricted to $S$ in $\HOD$, then it contains a set $S \setminus S'$, where $S' \subseteq S$ stationary, $S' \in \HOD$. In particular, $S' \notin U_T$ for all $T \in \mathcal S$, so $S' \cap T$ is non-stationary for all $T \in \mathcal S$, and $S'' = S' \setminus \bigcup_{T \in \mathcal S} (S' \cap T)$ is a stationary subset of $S$, disjoint from all members of $\mathcal S$. Since $\kappa$ is strongly measurable with respect to $S$, it is also strongly measurable with respect to $S''$, and thus there is some $T' \subseteq S''$ stationary such that the club filter restricted to $T'$ is an ultrafilter. But this contradicts the maximality of $\mathcal{S}$.    
\end{proof}

\begin{corollary}\label{lem:necessary.omega.strongly}
Let $\kappa$ be $(S,{<}\kappa)$-strongly measurable. Then $S$ is contained in the regular cardinals of $\HOD$, up to a non-stationary error. 
\end{corollary}
\section{$\omega_1$ is strongly measurable from one measurable cardinal}\label{Sec-omega1}
In this section, we would like to present a forcing that forces $\omega_1$ to be strongly measurable. By Lemma \ref{lem:sufficient.omega.strongly}, this means that in $\HOD$, the club filter of $\omega_1$ is an intersection of countably many normal measures. In the case of $\omega_1$, we can take a single measure. So, we would like to collapse a measurable cardinal $\kappa$ with a normal measure to be $\omega_1$ and then using a Mathias-type forcing, to add a club that diagonalizes the normal measure. In order to show that this works, we need to show two things. First, we must show that the iteration is cone homogeneous. This is done in Lemma \ref{Lem0:CHomog}. Second, we need to show that it does not collapse $\omega_1$. This amounts to show that the second step of the iteration is $\sigma$-distributive, which in turn requires us to be able to add a $U$-generic point to the generic club, see Lemma \ref{Lem01} for the precise formulation.  

Let us present the forcing. Suppose that $\kappa$ is a measurable cardinal in a model $V$ and $U$ is a normal measure on $\kappa$. 
Force with Levy collapse poset $\col(\omega,<\kappa)$ over $V$. Let $H$ be a $V$-generic filter. 
 
 Working in the generic extension $V[H]$, let $\co_U$ be the poset consisting of pairs $x = \l c, A\r$, where $c \subseteq \kappa$ is a bounded closed subset of $\kappa$ and $A \in U$. The condition $x' = \l c',A'\r$ extends $x$ if $c'$ is an end extension of $c$, $A' \subseteq A$, and $c'\setminus c \subseteq A$. 

It is clear that if $x = \l c,A\r$ and $x' = \l c',A'\r$ are two conditions with the same bounded closed set $c = c'$ then $x,x'$ are compatible. Since
$\kappa^{<\kappa} = \kappa$ in $V[H]$ then $\co_U$ satisfies $\kappa^+$-c.c.\ (which is $\aleph_2^{V[H]}$-c.c.).
The forcing $\co_U$ adds a diagonalizing club to $U$. It has also been studied in \cite{Rinot2009} in the context of well-behaved posets which can introduce square sequences, and was found useful in other contexts. 

The following lemma is the key ingredient in the proof of the distributivity of $\co_U$.
\begin{lemma}\label{Lem01}
 Work in $V[H]$ and fix some regular cardinal $\theta > \kappa^+$. There exists a stationary set of structures $M \elem H_\theta$ of size $|M| < \kappa$, with the property that $\sup(M \cap \kappa) \in A$ for every $A \in U \cap M$.
\end{lemma}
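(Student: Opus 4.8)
The plan is to work in $V[H]$ where $H$ is $\col(\omega,<\kappa)$-generic, and to exploit the fact that $\kappa$ remains measurable-like in a useful sense: although $\col(\omega,<\kappa)$ destroys the measurability of $\kappa$ (it becomes $\omega_1^{V[H]}$), the normal measure $U$ on $\kappa$ from $V$ still generates a $V$-ultrafilter in $V[H]$, and more importantly the embedding $j\colon V\to M\cong\mathrm{Ult}(V,U)$ can be lifted or analyzed through the collapse. The key point is that $\col(\omega,<\kappa)$ is a small forcing relative to $j(\kappa)$, and $j(\col(\omega,<\kappa))=\col(\omega,<j(\kappa))$ factors as $\col(\omega,<\kappa)*\col(\omega,[\kappa,j(\kappa)))$.

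**Main construction.**

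First I would fix in $V$ the normal measure $U$ and the ultrapower embedding $j\colon V\to M$. Working in $V[H]$, I want to build the club of structures $M\prec H_\theta$ with $|M|<\kappa$ and $\sup(M\cap\kappa)\in A$ for all $A\in U\cap M$. The natural approach: take an elementary substructure $M\prec H_\theta^{V[H]}$ with $M\cap\kappa\in\kappa$ (so $|M|<\kappa$), built as an increasing union of a chain $\langle M_\xi\mid \xi<\delta\rangle$ of length $\delta=\cf(\delta)<\kappa$ where we arrange at each stage to ``diagonalize'' against the finitely-generated-so-far part of $U$. Concretely, enumerate a cofinal sequence $\langle\delta_n\mid n<\omega\rangle$ in $\sup(M\cap\kappa)=:\gamma$ via the chain; at each stage $M_\xi$ contains only $<\kappa$ many sets, but we need $\gamma\in A$ for every $A\in U$ that ends up in the final $M$. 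The trick is that $U\cap M$ has size $|M|<\kappa$, and since $U$ is $\kappa$-complete, $\bigcap(U\cap M')\in U$ is nonempty and in fact club-like (measure one) for each initial segment $M'$; so at stage $\xi+1$ we pick the next ordinal of $M_{\xi+1}\cap\kappa$ inside $\bigcap(U\cap M_\xi)$. Then $\gamma=\sup(M\cap\kappa)$ is a limit of points in every $A\in U\cap M$ (each such $A$ appeared in some $M_\xi$ and all later chosen points lie in it), so provided $A$ is closed—which it need not be—or rather provided $A\in U$ and $U$ concentrates on appropriate points, $\gamma\in A$. Since $U$ is normal, every $A\in U$ contains a club of $V$, and $\gamma$ being a sup of an $\omega$-sequence (or $\cf(\delta)$-sequence) of elements eventually inside $A$ gives $\gamma\in\bar A$; to get $\gamma\in A$ itself we use that we may intersect with the club $C_A\in V$ witnessing $A\supseteq C_A\cap(\text{measure one})$, or more cleanly replace $A$ by $A\cap\{\alpha: \alpha \text{ limit}\}\in U$. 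I would make the chain have a fixed cofinality, e.g. build $M$ as a union of an $\omega_1$-chain or handle cofinality $\omega$ carefully.

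**The stationarity and the obstacle.**

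To see the set of such $M$ is stationary in $[H_\theta]^{<\kappa}$, I would show: given any function $F\colon [H_\theta]^{<\omega}\to H_\theta$, one can close off under $F$ while maintaining the diagonalization-against-$U$ property, because at each of the $<\kappa$ many stages we only need to intersect the (so-far) $<\kappa$-sized collection $U\cap M_\xi$, using $\kappa$-completeness of $U$, and closing under $F$ adds only $<\kappa$ many new sets per stage. So the two closure requirements (under $F$, and picking ordinals in $\bigcap(U\cap M_\xi)$) can be interleaved over a chain of length $<\kappa$ with $M\cap\kappa\in\kappa$ a limit ordinal of the right cofinality. The main obstacle I anticipate is the interaction between the supremum $\gamma=\sup(M\cap\kappa)$ and membership (rather than near-membership) in sets $A\in U$: one must ensure $\gamma$ genuinely lands in $A$, not just in its closure. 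This is handled by the normality/completeness of $U$ (passing to the club of limit points, or using that $\bigcap$ of $<\kappa$-many measure-one sets stays measure-one and relatively club), but it requires care that the sequence $\langle M_\xi\cap\kappa\mid\xi<\delta\rangle$ is continuous and that $\gamma$ has cofinality matching what $U$-measure-one clubs are closed under—so I would take $\delta$ of uncountable cofinality, or alternatively observe that normal measures concentrate on a club so any sup of a cofinal subsequence lies in it. A secondary point to verify is that $H_\theta^{V[H]}$ correctly ``sees'' $U\cap M$ and that elementarity gives us the needed reflection, which is routine since $U\in V\subseteq V[H]$ and $U$ is definable enough over $H_\theta$.
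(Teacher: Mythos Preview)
Your approach is genuinely different from the paper's, and as written it has a real gap at exactly the point you flag. You correctly identify the obstacle: from a countable chain $\langle M_n\rangle$ in $V[H]$ with $\alpha_n\in\bigcap(U\cap M_n)$ chosen at each stage, you obtain that $\gamma=\sup(M\cap\kappa)$ is a \emph{limit} of points of each $A\in U\cap M$, but you need $\gamma\in A$. Your proposed fixes do not close this gap. First, ``normal measures concentrate on a club'' is the wrong direction: $U$ contains every club, but a set $A\in U$ need not itself be closed, so $\gamma\in\overline{A}$ does not give $\gamma\in A$. Replacing $A$ by $A\cap\{\alpha:\alpha\text{ limit}\}$ does nothing to help, and there is no club $C_A\subseteq A$ in general. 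Second, you cannot take the chain of uncountable cofinality: in $V[H]$ we have $\kappa=\omega_1$, so $|M|<\kappa$ forces $M$ to be countable and $\gamma$ to have cofinality $\omega$; an ``$\omega_1$-chain'' in $V[H]$ would produce $|M|=\kappa$, which is too large.

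The paper sidesteps this entirely via the ultrapower. After reducing (using the $\kappa$-c.c.\ of the collapse) to finding $M'\subseteq H_\theta^V$ closed under a $V$-function $F'$, it takes in $V$ a single $N\prec H_\theta^V$ of size $\kappa$ with $N^{<\kappa}\subseteq N$ and closed under $F'$, and looks at $\tilde M'=j``N$ in $\mathrm{Ult}(V,U)$. Then $\tilde M'\cap j(\kappa)=\kappa$ exactly, and for every $A=j(\bar A)\in\tilde M'\cap j(U)$ one has $\kappa\in j(\bar A)$ simply because $\bar A\in U$. Elementarity of $j$ reflects this down to the desired $M'$ in $V$. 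The point is that the ultrapower hands you the supremum ($\kappa$ itself) sitting inside every relevant set by definition, with no limit argument needed. If you want to salvage a chain-style proof, you must move it into $V$: build a continuous $\kappa$-length chain $\langle N_\xi\rangle$ of small elementary submodels, set $B_\xi=\bigcap(U\cap N_\xi)\in U$, and use normality via a diagonal intersection (re-indexed along the club of $\gamma_\xi=\sup(N_\xi\cap\kappa)$) to find a limit $\xi$ with $\gamma_\xi\in\bigcap(U\cap N_\xi)$. That works, but it is not what your proposal does.
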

\begin{proof}
 Fix any $f\colon H_\theta^{<\omega} \to H_\theta$ in $V[H]$. We would like to show that there exists some $M \subseteq H_\theta$ which is closed under $f$ and satisfies the conditions in the statement of the lemma. 

 Fix in $V$ a name $\name{f}$ for $f$ and let $f' \colon\col(\omega, <\kappa) \times (H_\theta^V)^{<\omega} \to H_{\theta}^V$ be a function that sends $(p, x) \in \col(\omega,<\kappa)\times (H_\theta)^{<\omega}$ to $y$ if $p \Vdash \name{f}(x) = \check{y}$. Note that $x$ is a finite sequence of names.
 
 By the definition of $f'$, if $M' \prec H_\theta^V$ 
 is closed under $f'$, $M' \cap \kappa \in \kappa$, and $\kappa, U \in M'$,  
 then $M' = M \cap H_{\theta}^V$ for some 
 $M \subseteq H_\theta$ which is closed under $f$.
 Indeed, we may take $M = M'[H \cap M]$. By the chain condition of $\col(\omega,<\kappa)$,
 every name for a ground model object that belongs to $M'$, can be refined to
 a nice name which is contained in $M'$.
 
 Since $U \subseteq H_{\theta}^V$, it is therefore sufficient to prove that there exists some $M' \subseteq H_\theta^V$ which is closed under $f'$ and satisfies
 $|M'| < \kappa$ and $\sup(M' \cap \kappa) \in A$ for all $A \in M' \cap U$. 
 
 Working in $V$, take an elementary substructure $N \elem H_\theta^V$ satisfying $f'[N] \subseteq N$,  $N^{<\kappa} \subseteq N$, $|N| = \kappa$, $\kappa \in N$. 
 Let $j \colon V \to W \cong \Ult(V,U)$ be the ultrapower embedding induced by $U$. 
 Consider the structure $\tilde{M}' = j\image N \elem j(H_\theta^V)$. $\tilde{M}' \in W$, is closed under $j(f')$, and $\tilde{M}' \cap j(\kappa) \in j(\kappa)$.   
 It follows that $0_{j(\col(\omega,<\kappa))}$ forces $\tilde{M}'$ to be closed under $j(\name{f})$.
 Finally, 
 for every $A \in \tilde{M}' \cap j(U)$, $A = j(\bar{A})$ for some $\bar{A}\in U$ and therefore $\kappa \in A$. 
 
 So $\tilde{M}$ satisfies the conclusion of the lemma in $W$, and it is closed under $j(f')$. By the elementarity of $j$, there is $M' \in V$ satisfying the conclusion of the lemma, and closed under $f'$, 
 and thus $M'[H \cap M'] = M$ satisfies the requirements of the lemma.
\end{proof}

\begin{proposition}\label{prop01}
 $\co_{U}$ is $\kappa$-distributive.
\end{proposition}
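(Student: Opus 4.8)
We need to show that $\co_U$ adds no new sequences of ordinals of length $<\kappa$; equivalently, that the intersection of $<\kappa$ many dense open subsets of $\co_U$ is dense.

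The plan: Given a condition $x = \langle c, A\rangle$, an ordinal $\delta < \kappa$, and a sequence $\langle D_i \mid i < \delta\rangle$ of dense open sets, I want to build a decreasing sequence of conditions $x_i = \langle c_i, A_i\rangle$ meeting each $D_i$, and then take a "limit" condition. The difficulty is that $\co_U$ is not $<\kappa$-closed: naively, the closed sets $c_i$ increase and their union $\bigcup_i c_i$ would be a closed bounded set only if we add its supremum $\gamma = \sup_i(\sup c_i)$ — but then we need $\gamma \in \bigcap_{i<\delta} A_i$ for the limit to be a legitimate condition (since $c_{i+1}\setminus c_i \subseteq A_i$ forces the new points, including eventually $\gamma$, to lie in the measure-one sets). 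Since a $\delta$-intersection of sets in $U$ need not be in $U$, we cannot in general guarantee $\gamma$ lands in it.

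This is where Lemma \ref{Lem01} enters, and it is the heart of the argument. Work in $V[H]$; fix $\delta < \kappa$ and the dense sets. Using Lemma \ref{Lem01}, pick an elementary substructure $M \prec H_\theta$ (for suitably large regular $\theta$) with $\delta + 1 \subseteq M$, $|M| < \kappa$, containing $\co_U$, $U$, $x$, and $\langle D_i \mid i<\delta\rangle$ as elements, and — crucially — with the property that $\gamma := \sup(M\cap\kappa) \in A$ for every $A \in U \cap M$. Now carry out the construction of the decreasing sequence $\langle x_i \mid i<\delta\rangle$ *inside* $M$: at successor stages extend into $D_i$ (possible by density and elementarity, and the witness can be chosen in $M$), at limit stages $\lambda < \delta$ take $c_\lambda = \bigcup_{i<\lambda} c_i \cup \{\sup_{i<\lambda}\sup c_i\}$ and $A_\lambda = \bigcap_{i<\lambda} A_i$ — here $\sup_{i<\lambda}\sup c_i < \gamma$ lies in $A_\lambda \in U \cap M$ by the defining property of $M$, and one checks $\bigcap_{i<\lambda}A_i \in U \cap M$ since $\lambda < \kappa$ and $U$ is $\kappa$-complete and $M$ is closed under the relevant operations (or at least $\langle A_i\mid i<\lambda\rangle \in M$). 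Keeping the sequence in $M$ ensures each $\sup c_i < \gamma$ and each $A_i \in M$.

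Finally, form the limit condition $x_\delta = \langle c^*, A^*\rangle$ where $c^* = \bigcup_{i<\delta} c_i \cup \{\gamma\}$ and $A^* = A \cap \bigcap_{i<\delta} A_i$. Since $\langle A_i \mid i < \delta\rangle$ can be taken in $M$ (it is definable from the parameters we put into $M$, or built recursively inside $M$), $\bigcap_{i<\delta}A_i \in U \cap M$, so $\gamma \in \bigcap_{i<\delta}A_i$ by the key property of $M$; hence $c^*$ is a closed bounded subset of $\kappa$ with $c^*\setminus c_i \subseteq A_i$ for all $i$ (the new points above $\sup c_i$, including $\gamma$, lie in $A_i$), so $x_\delta$ is a genuine condition extending every $x_i$, and it meets every $D_i$. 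This shows $\bigcap_{i<\delta}D_i$ is dense, establishing $\kappa$-distributivity. The main obstacle, as indicated, is exactly engineering the limit points to stay inside the measure-one sets; Lemma \ref{Lem01} is tailored precisely to overcome it, so the proof is mostly a matter of setting up the recursion carefully within such an $M$.
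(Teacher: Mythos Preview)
Your overall strategy matches the paper's, but there is a genuine gap in your handling of limit stages. You invoke ``the defining property of $M$'' to conclude that $\sup_{i<\lambda}\sup c_i$ lies in $A_\lambda = \bigcap_{i<\lambda} A_i$ at an intermediate limit $\lambda<\delta$. That is not what Lemma~\ref{Lem01} gives you: the lemma only guarantees $\gamma = \sup(M\cap\kappa) \in A$ for $A\in U\cap M$, and says nothing about other ordinals below $\gamma$. Similarly, you have no guarantee that the externally constructed sequence $\langle A_i\mid i<\lambda\rangle$ lies in the countable model $M$, so even the claim $A_\lambda\in M$ is unjustified. The same issue resurfaces at the final stage: for $c^* = \bigcup_i c_i \cup\{\gamma\}$ to be closed you need $\sup_i \max(c_i) = \gamma$, which you never arrange; if that supremum is some $\beta<\gamma$ then $\beta$ is a limit point of $c^*$ and you would need $\beta\in\bigcap_i A_i$, which again is not provided.

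Both problems disappear once you notice what the paper exploits: since $H$ is $\col(\omega,<\kappa)$-generic, $\kappa=\aleph_1$ in $V[H]$, so it suffices to check that countably many dense open sets have dense intersection. Taking $\delta=\omega$ eliminates all intermediate limit stages. For the final limit, fix in advance a cofinal $\omega$-sequence $\langle\alpha_n\mid n<\omega\rangle$ in $\gamma$ and, when building $x_{n+1}\in D_n\cap M$, also require $\max(c_{n+1})\geq\alpha_n$; this forces $\sup_n\max(c_n)=\gamma$, so $c^*=\{\gamma\}\cup\bigcup_n c_n$ is closed and $x^*=\langle c^*,\bigcap_n A_n\rangle$ is a legitimate common extension. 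With these two adjustments your argument becomes exactly the paper's.
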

\begin{proof}
 Since $\kappa = \aleph_1$ in $V[H]$, we need to check that the intersection of a countable family $\{ D_n \mid n<\omega\}$ of dense open subsets of $\co_{U}$ is dense.  Pick some regular cardinal $\theta  > \kappa^+$ such that $\co_U, \{D_n \mid n<\omega\} \in H_\theta$. 
 By lemma \ref{Lem01}, for every condition $x \in \co_{U}$ there exists an elementary substructure $M \elem H_\theta$ of size $|M| = \aleph_0$, with $x, \po,\co_{U},\{D_n \mid n < \omega\} \in M$ and further satisfies that $\sup(M \cap \kappa) \in A$ for every $A \in M \cap \kappa$. We may also assume that $M = M'[H \cap M']$ for $M' \in V$, $M' \prec H_\theta^V$.

 Denote $\sup(M \cap \kappa)$ by $\alpha$ and pick a cofinal sequence $\l \alpha_n \mid n < \omega\r$ in $\alpha$. 
 We can construct  an increasing sequence of extensions $\l x_n \mid n<\omega\r \subseteq M$ of $x$, $x_n = \l c_n,A_n\r$ such that $x_{n+1} \in D_n$ and $\max(c_n) \geq \alpha_n$ for every $n<\omega$. Without loss of generality, we may assume that for every $A \in U \cap M$ there is $n < \omega$ such that $A_n \subseteq A$.
 
 Since $x_n = \l c_n,A_n\r \in M$ then $\alpha \in A_n$ for all $n < \omega$. 
 It follows that $x^* = \l \{ \alpha \} \cup (\bigcup_n c_n), \bigcap_n A_n\r$ is a condition in $\co_U$, which is clearly an upper bound of $\l x_n \mid n < \omega\r$.\footnote{Note that $\bigcap A_n = \bigcap_{A \in M' \cap U} A$.} 
 We conclude that there exists $x^*$ extending our given condition $x$ such that $x^* \in \bigcap_n D_n$. 
\end{proof}

\begin{lemma}\label{Lem0:CHomog}
 $\co_{U}$ is cone homogeneous.
\end{lemma}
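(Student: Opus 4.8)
The plan is to show directly that any two conditions in $\co_U$ can be extended to conditions below which the poset looks the same. Given $p = \langle c, A\rangle$ and $q = \langle d, B\rangle$ in $\co_U$, first fix an ordinal $\gamma < \kappa$ with $\gamma > \sup(c)$ and $\gamma > \sup(d)$, and set $E = (A \cap B) \setminus \gamma$, so $E \in U$. Then $p^* := \langle c, E\rangle$ and $q^* := \langle d, E\rangle$ are conditions extending $p$ and $q$ respectively (the closed parts are unchanged, and $E \subseteq A \cap B$). The key feature of this choice is that every condition $\langle c', A'\rangle$ in the cone $\co_U/p^*$ satisfies $c' \setminus c \subseteq E$, hence $c' \setminus c$ consists of ordinals above $\gamma > \sup(d)$; symmetrically, every condition $\langle d', B'\rangle \in \co_U/q^*$ has $d' \setminus d$ lying above $\sup(c)$.

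Define $\varphi \colon \co_U/p^* \to \co_U/q^*$ by $\varphi(\langle c', A'\rangle) = \langle d \cup (c' \setminus c),\, A'\rangle$, with proposed inverse $\langle d', B'\rangle \mapsto \langle c \cup (d' \setminus d),\, B'\rangle$. The main point to verify is that $d \cup (c' \setminus c)$ is again a bounded closed subset of $\kappa$ which end-extends $d$: it is bounded since $c'$ is; it end-extends $d$ because, as noted, $c' \setminus c$ lies entirely above $\sup(d)$; and it is closed because any limit point of $d \cup (c' \setminus c)$ below $\sup(d)$ must be a limit point of $d$ itself (the points of $c'\setminus c$ all exceed $\sup(d)$ and so contribute nothing), hence lies in $d$; while any limit point above $\sup(d)$ must be a limit point of $c' \setminus c \subseteq c'$, hence lies in $c'$ and, being above $\sup(c)$, lies in $c' \setminus c$. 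Once this is established, the remaining checks are immediate from the definition of the order on $\co_U$: $\varphi(\langle c', A'\rangle)$ extends $q^*$ since $A' \subseteq E$ and $(d \cup (c'\setminus c)) \setminus d = c' \setminus c \subseteq E$; $\varphi$ and the displayed inverse map are order-preserving (an end-extension of the top coordinate is preserved by grafting it onto $d$, while the $U$-component is carried along unchanged); and the two maps are mutually inverse since $(c \cup (d' \setminus d)) \setminus c = d' \setminus d$ and vice versa.

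There is no serious obstacle here; the only slightly delicate point is the closedness of $d \cup (c' \setminus c)$, which is exactly why $E$ was arranged to lie above $\sup(c) \cup \sup(d)$: splicing the tail of $c'$ onto $d$ then creates no new limit points below $\sup(d)$. Since such extensions $p^*, q^*$ and isomorphism $\varphi$ exist for every pair $p, q \in \co_U$, the poset $\co_U$ is cone homogeneous.
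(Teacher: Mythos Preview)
Your proof is correct and follows essentially the same approach as the paper's: shrink the $U$-sets to a common set lying above both closed stems, and define the cone isomorphism by swapping the stems while keeping the tail and the $U$-component fixed. The only cosmetic difference is that the paper first adjoins a common point $\nu \in A_1 \cap A_2$ above both stems before shrinking, so that the two extended stems share the same maximum; you instead leave the stems unchanged and rely on $E$ lying above $\gamma > \sup(c), \sup(d)$, which works just as well.
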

\begin{proof}
 Let $x_1 = \l c_1,A_1\r$, $x_2 = \l c_2,A_2\r$ be two conditions of $\co_{U}$. Take $\nu \in A_1 \cap A_2$ above $\max(c_1),\max(c_2)$
 and consider the extensions $y_1 = \l c_1 \cup \{\nu\}, (A_1 \cap A_2) \setminus (\nu+1)\r$, $y_2 =  \la c_2 \cup \{\nu\}, (A_1 \cap A_2) \setminus (\nu+1)\r$ of $x_1$ Define a cone isomorphism $\sigma\colon \co_U/y_1 \to \co_U/y_2$ by
 \[\sigma( \l c, A\r) = \l c_2 \cup (c\setminus \nu), A\r \]
 $\sigma$ is clearly order preserving map onto $\co_U/y_1$,  and has an order preserving inverse which is given by 
 \[    
   \sigma^{-1}( \l c, A\r) = \l c_1 \cup (c\setminus \nu), A\r
 \]
 \end{proof}

\begin{theorem}\label{Thm-0stmeas}
Suppose $C \subseteq \co_U$ is a $V[H]$-generic filter. Then, in $V[H \ast C]$,  
 $\kappa = \aleph_1^{V[H \ast C]}$ is strongly measurable. 
\end{theorem}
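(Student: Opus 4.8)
The plan is to show that in $V[H\ast C]$ the cardinal $\kappa$ satisfies the hypotheses of Lemma~\ref{lem:sufficient.omega.strongly} with $\eta=1$, and then to quote that lemma together with Definition~\ref{def:omega.strongly.measurable}.

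First I would pin down $\HOD^{V[H\ast C]}$. The Levy collapse $\col(\omega,<\kappa)$ is ordinal definable and weakly homogeneous, hence cone homogeneous, and $\co_U$ is cone homogeneous by Lemma~\ref{Lem0:CHomog}; since $\co_U$ is defined over $V[H]$ by a fixed formula from $\kappa$ and the ground-model parameter $U$, the cone automorphisms of $\col(\omega,<\kappa)$ fix the canonical name for $\co_U$, so the bookkeeping behind Lemma~\ref{corr: OD implies homogeneous} shows $\po:=\col(\omega,<\kappa)\ast\name{\co_U}$ is cone homogeneous. Consequently $0_{\po}$ decides $\varphi(\bar\alpha)$ for every formula $\varphi$ and ordinals $\bar\alpha$, so every set of ordinals ordinal definable in $V[H\ast C]$ is definable over $V$ from $U$ and hence lies in $V$; thus $\HOD^{V[H\ast C]}\subseteq V$, and in particular $\power(\kappa)^{\HOD^{V[H\ast C]}}\subseteq\power(\kappa)^V$. (Alternatively one may pass at the outset to $L[U]$, where $V=\HOD$ and $\po$ is outright ordinal definable.) I would also record that $\kappa=\aleph_1^{V[H\ast C]}$, since the collapse makes $\kappa=\aleph_1$ and $\co_U$ is $\kappa$-distributive (Proposition~\ref{prop01}) and $\kappa^+$-c.c.\ hence preserves cardinals; and that $\kappa$ is inaccessible in $\HOD^{V[H\ast C]}$, as it is regular in $V$ and, for $\lambda<\kappa$, $\power(\lambda)^{\HOD^{V[H\ast C]}}\subseteq\power(\lambda)^V$ has $V$-cardinality below the $V$-cardinal $\kappa$.

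The crux is the identification of the club filter. Let $b\subseteq\kappa$ be the fast club added by $C$. For every $A\in U$ the set of conditions $\l c,A'\r$ with $A'\subseteq A$ is dense in $\co_U$ (shrink $A'$ to $A'\cap A$), so by genericity $b$ is almost contained in $A$; hence for $S\subseteq\kappa$ with $S\in V$, if $S\in U$ then $S$ contains a tail of $b$, hence a club of $V[H\ast C]$, while if $S\notin U$ then $\kappa\setminus S\in U$ contains such a club and $S$ is nonstationary in $V[H\ast C]$. So for $S\in V$ the statements ``$S\in U$'', ``$S$ lies in the club filter of $V[H\ast C]$'' and ``$S$ is stationary in $V[H\ast C]$'' are equivalent. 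Restricting to $\power(\kappa)^{\HOD^{V[H\ast C]}}\subseteq\power(\kappa)^V$, the restriction of the club filter of $V[H\ast C]$ to $\HOD^{V[H\ast C]}$ equals $U\cap\HOD^{V[H\ast C]}$. I would then verify that this is a single normal measure belonging to $\HOD^{V[H\ast C]}$: it equals $\{S\in\HOD^{V[H\ast C]}\mid S\text{ is stationary in }V[H\ast C]\}$, which is ordinal definable in $V[H\ast C]$ with all members hereditarily ordinal definable, hence an element of $\HOD^{V[H\ast C]}$; it is an ultrafilter on $\power(\kappa)^{\HOD^{V[H\ast C]}}$ since exactly one of $S,\kappa\setminus S$ lies in $U$ for each such $S$; and it is $\kappa$-complete and normal because $\HOD^{V[H\ast C]}\subseteq V$ is closed under the relevant small intersections and under preimages of regressive functions while $U$ has these properties in $V$.

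With all this in place, Lemma~\ref{lem:sufficient.omega.strongly} (with $\eta=1$) applies and gives that $\kappa$ is $(\kappa,1)$-strongly measurable, i.e.\ there is no partition of $\kappa$ into two disjoint stationary sets inside $\HOD^{V[H\ast C]}$; since $(2^{2})^{\HOD^{V[H\ast C]}}=4<\kappa$, Definition~\ref{def:omega.strongly.measurable} (with $S=\kappa$ and witness $\eta=2$) yields that $\kappa$ is strongly measurable in $\HOD$. The step I expect to be most delicate is the first one: because $\co_U$ genuinely depends on the non-ordinal-definable object $U$, one must argue carefully — or else restrict to $L[U]$ — that the forcing adds no ordinal-definable subset of $\kappa$ outside $V$; granting that, the identification of $U$ with the $V$-trace of the club filter via the fast club is the conceptual heart, and checking that this trace is a normal measure living in $\HOD$ is routine bookkeeping.
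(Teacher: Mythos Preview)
Your proposal is correct and follows essentially the same approach as the paper: establish $\HOD^{V[H\ast C]}\subseteq V$ via cone homogeneity of the iteration, observe that for $S\in V$ stationarity in $V[H\ast C]$ coincides with membership in $U$, and conclude strong measurability. The paper is terser---it passes directly from ``the club filter is a $\HOD$-ultrafilter'' to the conclusion without explicitly invoking Lemma~\ref{lem:sufficient.omega.strongly}---but your more careful bookkeeping (inaccessibility in $\HOD$, normality, the definability concern about $U$) is sound and your caveat about needing $U$ ordinal definable (or passing to $L[U]$) is well taken, as the paper tacitly relies on this for Fact~\ref{fact:Levy} to apply.
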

\begin{proof}
By Lemma \ref{corr: OD implies homogeneous}, $\col(\omega,<\kappa) * \co_U$ is cone homogeneous, and therefore $\HOD^{V[H \ast C]} \subseteq V$. 
 It is clear from the definition of $\co_U$ that for every subset $S \subseteq \kappa$ in $V$, $S$ is stationary in $V[H \ast C]$ if and only if 
 $S \in U$. It follows that the closed unbounded filter on $\kappa = \aleph_1^{V[H \ast C]}$ in $V[H \ast C]$ is a $\HOD$-ultrafilter. Therefore $V[H \ast C]\models \kappa$ is strongly measurable. 
\end{proof}

\section{Strongly measurable successor of a regular cardinal}\label{Sec-Cforcing}
In this section we would like to force a successor of an uncountable regular cardinal, $\kappa = \lambda^+$ to be strongly measurable. There are a few difficulties that arise. First, there is a definable splitting of the ordinals below $\kappa$, according to the cofinalities, so the club filter cannot be an ultrafilter but rather an intersection of a few normal measures. This means that we should fix a collection of normal measures, that their intersection is indented to become the club filter. Moreover, when killing a stationary set which is small with respect to the designated filter, we are forcing a club through the previous regulars, which are now going to change cofinalities to various possibilities. This means that a Levy collapse by itself would not provide all the cofinality changes that we need, and we must use a more complicated method of changing cofinalities in a homogeneous way. 

Suppose that $\lambda < \kappa$ are two cardinals such that $\lambda$ is regular and $\kappa$ is measurable with $o(\kappa) = \lambda+1$. Let $\mathcal{U} = \l U_{\alpha,\tau} \mid \lambda < \alpha \leq \kappa, \tau < o^{\mathcal{U}}(\alpha)\r$ be a coherent sequence of normal measures with $o^{\mathcal{U}}(\kappa) = \lambda + 1$.

Let $\po^\mathcal{U}_\kappa = \l \po_\alpha, \qo_\alpha \mid \alpha < \kappa\r$ be the homogeneous iteration of subsection \ref{ssec-non-stationary-iteration}. In the next two sections, $\po$ stands for $\po^\mathcal{U}_\kappa$. For the main properties of $\po$, we refer the reader to Fact \ref{fact:summary-properties-of-PU}. We will explicitly need the following additional property of the iteration.

\begin{remark}\label{rmk1}
We note that it follows at once from the definition of $\qo_\alpha$ that every $V$-set $A \in \bigcap_{i<o^{\U}(\alpha)} U_{\alpha,i}$ contains a tail of the cofinal sequence $b_\alpha$. This is because 
every condition $q = \l t, T\r \in \qo_\alpha$ has a direct extension $q^A = \l t, T^A\r$ of $q$, which satisfies that $\suc_T(s) \subseteq A$ for all $s \in T$. 
\end{remark}

\begin{definition}
Let $G \subseteq \po$ be a $V$-generic filter and $H \subseteq \col(\lambda,<\kappa)$ be the Levy collapse generic over $V[G]$. 
Working in $V[G \ast H]$ we consider the filter $\F_\kappa$ generated by $\bigcap_{i \leq \lambda} U_{\kappa,i}$, 
\[ \F_\kappa = \{ A \subseteq \kappa \mid \exists B \in \bigcap_{i\leq \lambda} U_{\kappa,i}, B \subseteq A\}. \]
\end{definition}
The filter $\mathcal{F}_\kappa$ is going to generate the club filter in $\HOD$ in the generic extension.  
\begin{lemma}\label{Lem1}
 $\F_\kappa$ is a $\kappa$-complete filter in $V[G\ast H]$. 
\end{lemma}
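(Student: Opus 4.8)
The plan is as follows. First, $\F_\kappa$ is trivially a proper filter: writing $F_0=\bigcap_{i\le\lambda}U_{\kappa,i}$, which is a filter on $\kappa$ in $V$ (an intersection of ultrafilters) with $\kappa\in F_0$ and $\emptyset\notin F_0$, the set $\F_\kappa$ is by definition the upward closure of $F_0$ in $V[G\ast H]$, hence upward closed and closed under finite intersections, and it does not contain $\emptyset$. So the real content is $\kappa$-completeness: given $\delta<\kappa$ and $\langle A_j\mid j<\delta\rangle\in V[G\ast H]$ with each $A_j\in\F_\kappa$, I must produce a single $V$-set $D\in F_0$ with $D\subseteq\bigcap_{j<\delta}A_j$. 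Fixing $V$-sets $B_j\in F_0$ with $B_j\subseteq A_j$, it suffices to find $D\in F_0$ below $\bigcap_{j<\delta}B_j$.

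The idea is to reflect the (possibly non-ground-model) sequence $\langle B_j\mid j<\delta\rangle$ down to $V$ up to covering, and then invoke the $\kappa$-completeness of $F_0$ in $V$. This I would do in two steps. Step one removes the collapse: since $\kappa$ remains inaccessible in $V[G]$ (as follows from the Fact in \S\ref{ssec-non-stationary-iteration}), $\col(\lambda,<\kappa)$ is $\kappa$-c.c.\ over $V[G]$, so each $B_j$ lies in some $\mathcal B_j\in V[G]$ with $\mathcal B_j\subseteq F_0$ and $|\mathcal B_j|<\kappa$ (read off along a maximal antichain the possible values of a name for $B_j$); as $\kappa$ is regular in $V[G]$ and $\delta<\kappa$, the union $\mathcal B=\bigcup_{j<\delta}\mathcal B_j$ lies in $V[G]$, is contained in $F_0$, has size $<\kappa$, and $\bigcap\mathcal B\subseteq\bigcap_{j<\delta}B_j$. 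Step two removes the iteration $\po$: fixing in $V$ an enumeration $\langle C_\eta\mid\eta<\theta\rangle$ of $F_0$, the set $X=\{\eta : C_\eta\in\mathcal B\}$ is a set of fewer than $\kappa$ ordinals lying in $V[G]$, and by the $\kappa$-covering property of the non-stationary support iteration $\po$ — every set of fewer than $\kappa$ ordinals of $V[G]$ is covered by one in $V$, a consequence of its fusion structure (see \S\ref{ssec-non-stationary-iteration} and \cite{bunger}) — there is $X'\in V$ with $X\subseteq X'$ and $|X'|<\kappa$; then $\mathcal D=\{C_\eta : \eta\in X'\}\in V$ satisfies $\mathcal B\subseteq\mathcal D\subseteq F_0$ and $|\mathcal D|<\kappa$. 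Finally, $F_0$ is an intersection of $\le\lambda+1<\kappa$ many $\kappa$-complete ultrafilters, hence $\kappa$-complete in $V$, so $D:=\bigcap\mathcal D\in F_0$; and $D=\bigcap\mathcal D\subseteq\bigcap\mathcal B\subseteq\bigcap_{j<\delta}B_j\subseteq\bigcap_{j<\delta}A_j$, as required.

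The one non-bookkeeping ingredient, and the step I expect to be the main obstacle, is the $\kappa$-covering property invoked in step two: unlike the Easton-style Gitik iteration, which outright satisfies the $\kappa$-c.c., the non-stationary support variant used here is $\kappa$-c.c.\ only in a weaker, fusion-theoretic sense, and one has to extract from this that every set of fewer than $\kappa$ ordinals of $V[G]$ is already covered by one in $V$. (Alternatively, one could argue through the lift of the measures $U_{\kappa,i}$ to $V[G]$ together with a capturing argument, but the covering route is cleaner and keeps the lemma independent of the later sections.)
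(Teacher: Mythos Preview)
Your proposal is correct and follows essentially the same route as the paper: reduce from $V[G\ast H]$ to $V[G]$ via the $\kappa$-c.c.\ of $\col(\lambda,<\kappa)$, then from $V[G]$ to $V$ via a $\kappa$-covering property of the non-stationary support iteration (which the paper cites as \cite[Lemma 3.6]{bunger}), and finish with the $\kappa$-completeness of $\bigcap_{i\le\lambda}U_{\kappa,i}$ in $V$. The only cosmetic differences are that the paper applies the covering index-by-index (producing $Y_\nu\supseteq X_\nu$ for each $\nu$) rather than first taking the union and covering once, and it does not pass through an explicit ordinal enumeration of $F_0$; neither difference is substantive.
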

\begin{proof}
Suppose that $\l A_\nu \mid \nu < \beta\r \in V[G\ast H]$ is a sequence of $\beta < \kappa$ many sets of $\F_\kappa$. We would like to show that $\bigcap_{\nu < \beta} A_\nu$ belongs to $\F_\kappa$. We may assume that $A_\nu \in \bigcap_{i \leq \lambda} U_{\kappa_i}$ for all $\nu < \beta$. 

In order to prove the claim, we move from $V[G\ast H]$ to $V[G]$, and then to $V$. 
Working in $V[G]$, let $\name{A_\nu}$ be a $\col(\lambda,<\kappa)$-name for the $V$-set in $\bigcap_{i\leq\lambda} U_{\kappa,i}$. Since $\col(\lambda,<\kappa)$ satisfies $\kappa$-c.c., there exists a family of $V$-sets $X_\nu  \subseteq \bigcap_{i\leq\lambda} U_{\kappa,i}$, $X_\nu \in V[G]$, of size $<\kappa$ such that $\Vdash\name{A_\nu} \in \check{X}_\nu$. Fix in $V$ a $\po$-name $\name{X}_\nu$ for each $X_\nu$.

 Let $p \in G$ be a condition forcing the above. Moving back to $V$, the fusion lemma for non-stationary support iteration of Prikry type forcings, \cite[Lemma 3.6]{bunger}, guarantees
 that there exists some $q \in G$ and a sequence of sets $\l Y_\nu \mid \nu < \beta\r$ in $V$, 
 so that for each $\nu < \beta$, $Y_\nu \subseteq \bigcap_{i \leq \lambda}U_{\kappa,i}$ has size $|Y_\nu| < \kappa$, 
 and $q\Vdash \name{X}_\nu \subseteq \check{Y}_\nu$. 
 For each $\nu$, let $A'_\nu = \bigcap Y_\nu$, and $A' = \bigcap_{\nu < \beta} A'_\nu$. 
 Since $U_{\kappa,i}$ is $\kappa$-complete for all $i \leq \lambda$, we have in $V[G\ast H]$ that
 $A' \in \F_\kappa$ and $A' \subseteq \bigcap_{\nu < \beta} A_\nu$. 
\end{proof}
To produce a model where $\kappa$ is $\omega$-strongly measurable, we will force over $V[G\ast H]$ to add 
a closed unbounded set $C \subseteq \kappa$ which is almost contained in every set $A \in \F_\kappa$. 

\begin{definition}
 Working in a $V$-generic extension $V[G\ast H]$ by $G\ast H \subseteq  \po*\col(\lambda,<\kappa)$, 
 we define the forcing $\co_{\F_\kappa}$.  Conditions $x \in \co_{\F_\kappa}$ are pairs $x = \l c, A\r$ where
 $c$  is a closed and bounded subset of $\kappa$ and $A \in \F_\kappa$. 
 A condition $x' = \l c', A'\r \in \co_{\F_\kappa}$ extends $x$ (denoted $x' \geq x$) if
 \begin{itemize}
 
 \item[(i)] $c' \cap \max(c) = c$, 
 \item[(ii)] $A' \subseteq A$, and 
 \item[(iii)] $c' \setminus c \subseteq A$.  
 \end{itemize}

\end{definition}

For conditions $x = \l c,A\r \in \co_{\F_{\kappa}}$ we will frequently denote $c$ and $A$ by  $c^x$ and $A^x$ respectively. 
It is clear that if $R \subseteq \co_{\F_\kappa}$ is generic then the union $C = \bigcup\{ c^x \mid x \in R\}$ is a closed and unbounded subset of $\kappa$ which is almost contained in every $A \in \F_{\kappa}$. 
Since $\F_\kappa$ is a filter and $\kappa^{<\kappa} = \kappa$, the forcing $\co_{\F_\kappa}$ is $\kappa$-centered and therefore satisfies $\kappa^+$-chain condition.

The following lemma is a parallel of Lemma \ref{Lem01}. From this lemma we will infer the distributivity of the forcing $\co_{\F_\kappa}$.

\begin{lemma}\label{Lem2}
 Working in $V[G\ast H]$, for any regular cardinal $\theta > \kappa^+$ and $\tau \leq \lambda$, there exists a stationary set of structures $M \elem H_\theta$ with $\sup(M \cap \kappa) = \alpha$ which satisfy 
\begin{itemize} 
 \item[(i)] $M^{<\tau} \subseteq M$; 
 \item[(ii)] $o^{\U}(\alpha) = \tau$; 
 \item[(iii)] For every $A \in \F_\kappa \cap M$, $\alpha \in A$ and moreover $b_\alpha \subseteq^* A$ (namely $b_\alpha \setminus A$ is bounded in $\alpha$).
\end{itemize}
 
\end{lemma}
\begin{proof}
Fix a function $f\colon [H_{\theta}]^{<\omega} \to H_{\theta}$ in $V[G\ast H]$. Back in the ground model $V$, let $\name{f}$ be a $\po * \col(\lambda,<\kappa)$-name for $f$. Since $\col(\lambda,<\kappa)$ is $\kappa$-c.c., there exists a $\po$-name function $\name{F} \colon [H_\theta^V]^{<\omega} \to [H_\theta^V]^{<\kappa}$ such that $\name{f}(x)$ is forced to be a member of $\name{F}(x)$ for every $x \in H_\theta^V$.

Let us consider our ability to approximate $\name{F}$ in $V$. Let $N \elem H_\theta^V$ be an elementary substructure of size $\kappa$ with $N^{<\kappa} \subseteq N$ and $\kappa, \po,\name{F}\in N$.

\begin{claim}\label{claim:properness}
Let $N$ be as above and $p \in \po \cap N$. Then, there is $p^* \leq p$ which is \emph{$N$-generic}, namely for every name for an ordinal $\name\sigma \in N$, there is set of ordinals $S \in N$ such that $S \subseteq N$ and $p^* \Vdash \name\sigma \in S$.
\end{claim} 
\begin{proof}
 By a standard argument concerning capturing dense open sets in Prikry-type forcings and fat-trees (e.g., see \cite{Gitik-HB}) 
 for every dense open set $D$ of $\po$, $p \in \po$ there exists a direct extension $p' \geq^* p$ which reduces capturing $D$ to a dense subset of $\po_\mu$ for some $\mu < \kappa$, namely the set of all $r \in \po_\mu$ such that $r ^\smallfrown p' \restriction [\mu,\kappa) \in D$ is dense below $p' \restriction \mu$. 
 Moreover, given $\nu < \kappa$ we can also make the direct extension $p'$ to agree with $p$ up to $\nu+1$ (i.e., $p'\uhr\nu+1 =  p\uhr\nu+1$) in which case $\mu > \nu$. 

 Given an initial condition $p \in \po$, we can list the dense open sets in $N$, 
 $\l D_i \mid i < \kappa\r$, and form an increasing sequence of direct extensions of $p$, $\l p^i \mid i < \kappa\r$, together with a closed unbounded set $C^* = \l \nu_i \mid i  <\kappa\r$ such that for every successor ordinal $i = i'+1$,  $p^i \in N$ reduces the dense set $D_{i'}$ of $\po$ to a bounded dense set $D_{i'}'$ of $\po_{\mu_i}$ for some $\nu_i <\mu_i < \kappa$, and $p^i\uhr \nu_i+1 = p^{i'}\uhr \nu_i+1$. 
 By a standard argument concerning non-stationary support iterations (e.g., see the fusion argument in the proof of \cite[Lemma 2.2]{bunger}), the sequence of direct extensions $\l p^i \mid i < \kappa\r$ has an upper bound $p^* \geq^* p$. 
 It follows that for every $\po$-name $\name{\sigma} \in N$ of an element of $H_\theta^V$, there exists some $\mu < \kappa$ and 
 a $\po_\mu$-name $\name{\sigma}' \in N$ such that $p^* \Vdash \name{\sigma} = \name{\sigma}'$. 

 In particular, for 
 each such name $\name{\sigma}$, $p^*$ forces that it can take $<\kappa$ many values in $H_\theta^V$, all of which are in $N$. This follows from the elementarity of $N$ in $H_\theta$ and the fact $\kappa+1 \subseteq N$. 
  \end{proof}

 Let $j_\tau\colon V \to M_\tau$ be the ultrapower embedding by $U_{\kappa,\tau}$ and  $M' = j_{\tau}\image N \elem j_\tau(H_\theta^V)$, $M' \in M_\tau$. 
\begin{claim}\label{claim:closure of j_tau''N}
 $j_\tau(p^*)$ forces that $M'$ is closed under $j(\name{F})$. 
 \end{claim}
 \begin{proof}
 Indeed 
 if $G^* \subseteq j_\tau(\po)$ is $M_\tau$-generic with $j_\tau(p^*) \in G^*$ 
 then for each 
 $\mu < \kappa$, $G^*_\mu = \{ p\uhr \mu \mid p \in G^*\}$ is a $V$-generic filter for $\po_\mu$. 
 For every  $y = j_\tau(\name{\sigma})_{G^*} \in M' \cap j_\tau(H_\theta^V)$ and $F^* = j_\tau(\name{F})_{G^*}$, $F^*(y) = j_\tau(\name{F})\left( j_\tau(\name{\sigma}) \right)_{G^*}$ is the $G^*$-generic interpretation of the $j_\tau(\po)$-name $j_\tau\left( \name{F}(\name{\sigma}) \right)$. As $p^*$ forces $\name{F}(\name{\sigma}) = \name{\sigma}'$ for some $\name{\sigma}' \in N$ 
 which is a $\po_\mu$-name for some $\mu < \kappa$, 
 we see that $j_\tau(p^*)$ forces $j_\tau\left( \name{F}(\name{\sigma}) \right)= j_\tau(\name{\sigma}')$, where $j_\tau(\name\sigma')$ is a $j_\tau(\po_\mu) = \po_\mu$-name.
 If $q \in G_\mu$ and $z \in N \cap H_\theta^V$ are such that $q \Vdash_{\po_\mu} \name{\sigma}' = \check{z}$ then 
 $j_\tau(q) = q \Vdash j_\tau(\name{\sigma}') = j_\tau(\check{z})$. We conclude that 
 $F^*(y) = z \in M'$.
 \end{proof}
 
 We now return to prove the statement of the lemma. It
  is sufficient to prove that in $V[G]$ there exists some 
  $M' \subseteq H_\theta^V$ which is closed under $F$ and satisfies
 requirements (i)-(iii).
 Let $p \in \po$ be a condition. 
 By a standard density argument there are $N \elem H_\theta^V$ and $p^* \in G$ which is $N$-generic, with $p^* \geq^* p$.\footnote{This is true, since for every $q \geq p$ there is $r \geq^* p$ such that $q \geq r$ is a finite Prikry extension. Let $q^* \geq^* q$ be $N$-generic. Then, there is $\alpha$ such that $q \restriction [\alpha, \kappa) = r \restriction [\alpha, \kappa)$. So, the condition $p^* = r \restriction \alpha \cup q^* \restriction [\alpha, \kappa)$ is an $N$-generic direct extension of $p$, from $G$.} 
 By Claim \ref{claim:closure of j_tau''N}, $j_\tau(p^*)$ forces that 
 $M' = j_\tau\image N$ is closed under $j_\tau(F)$. 
 It is now clear that $M'$ satisfies condition (ii) in the ultrapower, 
 as $o^{j_\tau(\U)}(\kappa) = \tau$ and $M' \cap j_\tau(\kappa) = \kappa$. 
 Condition (i) holds as well, since $j(\po_\kappa) / \po_\kappa$ does not 
 introduce new ${<}\tau$-sequences to $j_\tau \image N$. 
 Therefore, it remains to verify that $j_\tau(p^*)$
 forces $M'$ to satisfy condition (iii). 
 For every $A \in M' \cap j_\tau(\F_\kappa)$ there is some $B \in \F_\kappa$ such that $A = j_\tau(B)$. In particular, $A \cap \kappa = B \in \F_\kappa$ and $\kappa \in A$. 
 Since $\F_\kappa \subseteq \bigcap_{i \leq \tau} U_{\kappa,i}$ (which is $\mathcal{F}_\kappa^{M_\tau}$), it follows form remark \ref{rmk1} that for every generic filter $G^* \subseteq j_\tau(\po)$
 over $M_\tau$, if $b_\kappa$ is the $G^*$-induced $\qo_\kappa^\tau$ cofinal generic sequence, then it is almost contained in $B = A \cap \kappa$.
\end{proof}

\begin{proposition}\label{prop1}
 $\co_{\F_\kappa}$ is $\kappa$-distributive.
\end{proposition}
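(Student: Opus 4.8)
The plan is to run the argument of Proposition~\ref{prop01}, now using Lemma~\ref{Lem2} with $\tau=\lambda$ in place of Lemma~\ref{Lem01}. Since $\kappa=\lambda^+$ in $V[G\ast H]$, it suffices to show that the intersection of any family $\langle D_i\mid i<\lambda\rangle$ of dense open subsets of $\co_{\F_\kappa}$ is dense (families of size $<\lambda$ are treated the same way, and more easily); fix such a family and a condition $x$. Pick a regular $\theta>\kappa^+$ with $\co_{\F_\kappa},\langle D_i\mid i<\lambda\rangle,x\in H_\theta$ and apply Lemma~\ref{Lem2} to get $M\elem H_\theta$ with $x,\langle D_i\rangle\in M$, $M^{<\lambda}\subseteq M$, $\alpha:=\sup(M\cap\kappa)$ satisfying $o^{\U}(\alpha)=\lambda$, $\alpha\in\bigcap(\F_\kappa\cap M)$, and $b_\alpha\subseteq^* A$ for all $A\in\F_\kappa\cap M$; inspecting the construction of $M$ in that proof (the witness produced there contains every ordinal below the relevant cardinal) we may moreover take $b_\alpha\subseteq M$. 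Note that $\cf(\alpha)=\cf(\otp(b_\alpha))=\cf(\omega^{\lambda})=\lambda$ since $\lambda$ is regular, that $\F_\kappa$ is $\kappa$-complete by Lemma~\ref{Lem1}, and fix a cofinal sequence $\langle\alpha_i\mid i<\lambda\rangle$ of $\alpha$ in $M$.

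I would then build recursively an increasing sequence $\langle x_i\mid i\le\lambda\rangle$ of conditions above $x$, with $x_i\in M$ for $i<\lambda$, so that $x_{i+1}\in D_i$; that $\max(c^{x_i})$ is a point of $b_\alpha$ lying above $\alpha_i$ and above every ordinal of $b_\alpha\setminus A^{x_i}$; and that at limit $j<\lambda$, $x_j=\langle\{\gamma_j\}\cup\bigcup_{i<j}c^{x_i},\ \bigcap_{i<j}A^{x_i}\rangle$, where $\gamma_j:=\sup_{i<j}\max(c^{x_i})$. At a successor step, given $x_i\in M$, use elementarity to take $y\in D_i\cap M$ with $y\ge x_i$; then $A^y\in\F_\kappa\cap M$, so $b_\alpha\setminus A^y$ is bounded below $\alpha$, and (using that $b_\alpha\subseteq M$ is cofinal in $\alpha$ and $\cf(\alpha)=\lambda$) there is $\nu\in b_\alpha\cap M$ above $\max(c^y)$, above $\alpha_i$, and above that bound; put $x_{i+1}:=\langle c^y\cup\{\nu\},A^y\rangle$, which extends $y\ge x_i$, still lies in $D_i$ (openness) and in $M$, and keeps the same $A$-coordinate as $y$. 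At a limit step $j<\lambda$: $\bigcap_{i<j}A^{x_i}\in\F_\kappa$ by $\kappa$-completeness; $\gamma_j<\alpha$ since $j<\lambda=\cf(\alpha)$, so $\gamma_j\in b_\alpha$, being a limit point of $b_\alpha$ below $\alpha$; and since $\langle x_i\mid i<j\rangle\in M^{<\lambda}\subseteq M$, the set $\bigcap_{i<j}A^{x_i}$ lies in $\F_\kappa\cap M$, so $b_\alpha\setminus\bigcap_{i<j}A^{x_i}=\bigcup_{i<j}(b_\alpha\setminus A^{x_i})$ consists of ordinals below the various $\max(c^{x_i})<\gamma_j$, hence $\gamma_j\in\bigcap_{i<j}A^{x_i}$; end-extension and boundedness of the $c^{x_i}$ make $\{\gamma_j\}\cup\bigcup_{i<j}c^{x_i}$ a closed bounded set, $x_j$ is a common extension of $\langle x_i\mid i<j\rangle$, and it lies in $M$ by definability from that sequence. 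At the top, $\sup_{i<\lambda}\max(c^{x_i})=\alpha$ by the choice of the $\alpha_i$, and I set $x^*:=\langle\{\alpha\}\cup\bigcup_{i<\lambda}c^{x_i},\ \bigcap_{i<\lambda}A^{x_i}\rangle$: here $\bigcap_{i<\lambda}A^{x_i}\in\F_\kappa$ by $\kappa$-completeness, and $\alpha\in\bigcap_{i<\lambda}A^{x_i}$ because each $A^{x_i}\in\F_\kappa\cap M$ and $\alpha\in\bigcap(\F_\kappa\cap M)$, so $x^*$ is a condition extending $x$; by openness $x^*\in D_i$ for every $i<\lambda$, as required.

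The main obstacle is exactly the tension this construction resolves, and it is why Lemma~\ref{Lem2} is formulated with $\tau=\lambda$ and with clause (iii). To keep meeting the $D_i$ via elementarity one must keep the conditions $x_i$ inside $M$, so that their $A$-coordinates lie in $\F_\kappa\cap M$ and clause (iii) applies to them; but at a limit stage the new top point $\gamma_j$ is forced to be a point of $b_\alpha$, since a set of the form $b_\alpha$ is the only thing ``almost absorbed'' by every member of $\F_\kappa\cap M$, and $b_\alpha\notin M$ in general. The construction gets around this because enlarging the $c$-coordinate by a single point of $b_\alpha\subseteq M$ leaves the $A$-coordinate — the part that has to stay in $M$ — untouched; and the two halves of clause (iii) do complementary jobs: $b_\alpha\subseteq^* A$ handles the intermediate limits $j<\lambda$, where $\langle x_i\mid i<j\rangle$ still lies in $M$, while $\alpha\in\bigcap(\F_\kappa\cap M)$ handles the terminal stage $\lambda$, where the full $\lambda$-sequence need not lie in $M$ although its individual $A$-coordinates still do. The remaining checks — closure and boundedness of the $c^{x_i}$, and (if one wants the limit conditions to lie literally in $M$) that $|c^{x_i}|<\lambda$ throughout, which follows from regularity of $\lambda$ — are routine.
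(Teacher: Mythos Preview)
Your proof is correct and follows essentially the same approach as the paper's: both invoke Lemma~\ref{Lem2} with $\tau=\lambda$ to obtain $M$, then build a $\lambda$-chain of conditions in $M$ whose maxima track $b_\alpha$, using clause~(iii) to close off at limits and at the terminal stage. Your bookkeeping differs only cosmetically (you track the invariant ``$\max(c^{x_i})$ lies above $b_\alpha\setminus A^{x_i}$'' rather than indices $i_j$ into an enumeration of $b_\alpha$), and you make explicit the point $b_\alpha\subseteq M$---which the paper's argument uses tacitly when keeping the $x_j$ inside $M$.
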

\begin{proof}
 Since $\kappa = \lambda^+$ in $V[G\ast H]$, we need to check that the intersection of every set $\{ D_i \mid i < \lambda\}$ of $\lambda$-many dense open subsets of $\co_{\F_\kappa}$ is dense.  Pick some regular cardinal $\theta  > \kappa^+$ such that $\po, \co_{\F_\kappa}, \{D_i \mid i < \lambda\} \in H_\theta$. 
 By Lemma \ref{Lem2}, for every condition $x \in \co_{\F}$ there exists an elementary substructure $M \elem H_\theta$ of cardinality $< \kappa$, with $x, \po,\co_{\F_\kappa},\{D_i \mid i < \lambda\} \in M$ and which further satisfies (i) $M^{<\lambda} \subseteq M$; (ii) $\sup(M \cap \kappa) = \alpha$ has $o^{\U}(\alpha) = \lambda$; and 
 (iii) $\alpha \in A$ and $b_\alpha$ is almost contained in $A$ for every  $A \in \F_\kappa \cap M$.
 
 Let $\l \alpha_i \mid i < \lambda\r$ be an increasing enumeration of $b_\alpha$. 
 We construct by induction an increasing sequence of extensions $\l x_j \mid j < \lambda\r$ of $x$, together with an increasing sub-sequence $\l \alpha_{i_j} \mid j < \lambda\r$ of $b_\alpha$ such that $x_{j+1} \in D_j$ for every $j < \lambda$, and  $\{\alpha\} \cup \{ \alpha_{i_j} \mid j > j^*\} \subseteq A^{x_{j^*}}$ for all $j^* < \lambda$. 
 For notational simplicity, denote $x$ by $x_{-1}$.  Given a condition $x_j \in M$ with a suitable $\alpha_j$ as above, 
 we take $x_{j+1}\in D_{j+1}$ to be an extension of $x_j$ with $\max(c^{x_{j+1}}) > \alpha_{i_j}$.  Since $A^{x_{j+1}} \in M \cap \F_{\kappa}$ we can use (iii) and get that $\alpha \in A^{x_{j+1}}$ and
 there exist some $i' > i_j$ such that $\{ \alpha_i \mid i >i'\} \subseteq A^{x_{j+1}}$. Take $i_{j+1} < \lambda$ to be the minimal such $i' > i_j$.
 It remains to show that the construction goes through at limit stages $\delta \leq \lambda$. 
 Given $\l x_j \mid j < \delta\r$ we define $i_\delta = \sup_{j < \delta} \alpha_{i_j}$. It is clear from our construction at successor steps that
 $\alpha_{i_\delta} = \sup_{j < \delta} \max(c^{x_j})$ and $\alpha_{i_\delta} \in A^{x_j}$ for every $j < \delta$. 
 It follows that the condition $x_\delta = \l \{ \alpha_\delta \} \cup \bigcup_{j < \delta} c^{x_j}, \bigcap_{j<\delta} A^{x_j}\r$ satisfies the desirable conditions. Moreover if $\delta < \lambda$ then $x_\delta \in M$ since $M$ is closed under $<\lambda$-sequences. 
 
 Since the limit construction goes through at stage $\lambda$ as well (although not producing a condition in $M$), the limit condition $x_\lambda$ is an extension of $x$, and belongs to $\bigcap_{j <\lambda} D_j$. 
\end{proof}

The argument of the proof of lemma \ref{Lem0:CHomog} for $\co_U$ applies to $\co_{\F_\kappa}$ as well.
\begin{lemma}\label{Lem:CHomog}
 $\co_{\F_\kappa}$ is cone homogeneous.
\end{lemma}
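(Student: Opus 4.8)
The plan is to mimic exactly the proof of Lemma \ref{Lem0:CHomog}, since the two forcings $\co_U$ and $\co_{\F_\kappa}$ have the same shape: conditions are pairs $\l c, A\r$ with $c$ a bounded closed subset of $\kappa$ and $A$ a member of a fixed $\kappa$-complete filter (there $U$, here $\F_\kappa$), and the extension relation is identical (end-extension of the stem, shrinking of the measure-one/large set, new points of the stem lying inside the old large set). The only feature of $U$ used in the proof of Lemma \ref{Lem0:CHomog} is that it is a filter closed under finite intersections and that any two of its members have common elements above any given bound; $\F_\kappa$ enjoys both properties by Lemma \ref{Lem1} (it is a $\kappa$-complete filter), so the argument transfers verbatim.

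Concretely, given two conditions $x_1 = \l c_1, A_1\r$ and $x_2 = \l c_2, A_2\r$ of $\co_{\F_\kappa}$, I would first pick an ordinal $\nu \in A_1 \cap A_2$ with $\nu > \max(c_1), \max(c_2)$; such $\nu$ exists because $A_1 \cap A_2 \in \F_\kappa$ is in particular unbounded in $\kappa$ (being $\kappa$-complete and containing, say, the tail filter). Then set
\[
 y_1 = \l c_1 \cup \{\nu\},\, (A_1 \cap A_2)\setminus(\nu+1)\r, \qquad
 y_2 = \l c_2 \cup \{\nu\},\, (A_1 \cap A_2)\setminus(\nu+1)\r,
\]
which are extensions of $x_1$ and $x_2$ respectively. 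Define $\sigma \colon \co_{\F_\kappa}/y_1 \to \co_{\F_\kappa}/y_2$ by $\sigma(\l c, A\r) = \l c_2 \cup (c\setminus\nu), A\r$, with inverse $\sigma^{-1}(\l c, A\r) = \l c_1 \cup (c\setminus\nu), A\r$. One checks that $\sigma$ is a well-defined order isomorphism: any condition $\l c, A\r \geq y_1$ has $c \cap (\nu+1) = c_1 \cup \{\nu\}$ and $A \subseteq (A_1\cap A_2)\setminus(\nu+1)$, so $c_2 \cup (c\setminus\nu)$ is a bounded closed set whose new points past $\nu$ lie in $A_1\cap A_2 \supseteq A$, making $\sigma(\l c,A\r)$ a genuine extension of $y_2$; and the end-extension, containment, and $c'\setminus c \subseteq A$ clauses are preserved because the parts of the stems below and including $\nu$ are frozen and the parts above $\nu$ are left untouched.

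I do not anticipate any real obstacle here — this is a routine transfer of the earlier argument. The one point worth a sentence of care is verifying that $A_1 \cap A_2$ is unbounded in $\kappa$ so that a suitable $\nu$ exists; this follows since $\F_\kappa$ contains all co-bounded subsets of $\kappa$ (every $B \in \bigcap_{i\leq\lambda} U_{\kappa,i}$ is unbounded, and $\F_\kappa$ is closed under finite, indeed ${<}\kappa$, intersections by Lemma \ref{Lem1}). Thus I would simply write: "The proof is identical to that of Lemma \ref{Lem0:CHomog}, replacing $U$ by $\F_\kappa$ and using Lemma \ref{Lem1} to see that $A_1 \cap A_2 \in \F_\kappa$ is unbounded in $\kappa$."

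\begin{proof}
The proof is identical to that of Lemma \ref{Lem0:CHomog}. Let $x_1 = \l c_1, A_1\r$ and $x_2 = \l c_2, A_2\r$ be conditions of $\co_{\F_\kappa}$. By Lemma \ref{Lem1}, $A_1 \cap A_2 \in \F_\kappa$, and in particular $A_1 \cap A_2$ is unbounded in $\kappa$; pick $\nu \in A_1 \cap A_2$ above $\max(c_1), \max(c_2)$ and set
\[
 y_1 = \l c_1 \cup \{\nu\},\, (A_1 \cap A_2)\setminus(\nu+1)\r, \qquad
 y_2 = \l c_2 \cup \{\nu\},\, (A_1 \cap A_2)\setminus(\nu+1)\r,
\]
which extend $x_1$ and $x_2$ respectively. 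Define $\sigma \colon \co_{\F_\kappa}/y_1 \to \co_{\F_\kappa}/y_2$ by
\[
 \sigma(\l c, A\r) = \l c_2 \cup (c\setminus \nu),\, A\r.
\]
This is an order preserving map onto $\co_{\F_\kappa}/y_2$ with order preserving inverse
\[
 \sigma^{-1}(\l c, A\r) = \l c_1 \cup (c\setminus \nu),\, A\r,
\]
as is verified exactly as in Lemma \ref{Lem0:CHomog}.
\end{proof}
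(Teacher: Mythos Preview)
Your proposal is correct and follows exactly the paper's own approach: the paper simply states that the argument of Lemma \ref{Lem0:CHomog} applies verbatim to $\co_{\F_\kappa}$, and your write-up carries this out in full detail, with the minor added observation (via Lemma \ref{Lem1}) that $A_1 \cap A_2 \in \F_\kappa$ is unbounded.
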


\begin{theorem}\label{Thm-1stmeas}
In the generic extension by $\po * \col(\lambda,<\kappa) * \co_{\F_\kappa}$, $\kappa$ is strongly measurable.
\end{theorem}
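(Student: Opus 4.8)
The plan is to mimic the proof of Theorem~\ref{Thm-0stmeas}. We may assume $\U$, hence the iteration $\po$, is ordinal definable (e.g.\ by passing first to a ground model with $V=\HOD$, keeping the hypotheses on $\kappa,\lambda$). Write $W=V[G\ast H\ast R]$ for the extension by $\po\ast\col(\lambda,<\kappa)\ast\co_{\F_\kappa}$. I would first argue $\HOD^{W}\subseteq V$: the iteration $\po$ is cone homogeneous --- this is the homogeneous iteration from \S\ref{ssec-non-stationary-iteration}, and it is precisely what Lemma~\ref{FACT:homogiter} yields once its hypotheses are verified for the iterands $\qo_\alpha$ --- while $\col(\lambda,<\kappa)$ is homogeneous and $\co_{\F_\kappa}$ is cone homogeneous by Lemma~\ref{Lem:CHomog}; since all three forcings are ordinal definable ($\co_{\F_\kappa}$ from $\U$ via the earlier generics), Lemma~\ref{corr: OD implies homogeneous} makes the three-step iteration cone homogeneous, and Fact~\ref{fact:Levy} gives $\HOD^{W}\subseteq V$. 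I would also record that $\kappa=\lambda^+$ in $W$: $\po$ preserves $\kappa$, $\col(\lambda,<\kappa)$ collapses $(\lambda,\kappa)$ while preserving cardinals $\le\lambda$ and $\ge\kappa$, and $\co_{\F_\kappa}$ is $\kappa$-distributive (Proposition~\ref{prop1}) and $\kappa^+$-c.c.; in particular $\kappa$ is an uncountable regular cardinal in $W$, as required by Definition~\ref{def:omega.strongly.measurable}.

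Next I would bring in the club $C=\bigcup\{c^x\mid x\in R\}$ added by $\co_{\F_\kappa}$: a routine density argument shows $C$ is closed unbounded in $\kappa$ and $C\subseteq^* A$ for every $A\in\F_\kappa$ (computed in $V[G\ast H]$). The central claim is that \emph{every $S\in V$ that is stationary in $W$ lies in $U_{\kappa,i}$ for some $i\le\lambda$}. Indeed, if $S\in V$ belongs to none of the $U_{\kappa,i}$, $i\le\lambda$, then $\kappa\setminus S\in\bigcap_{i\le\lambda}U_{\kappa,i}$, and being a $V$-set this puts $\kappa\setminus S$ into $\F_\kappa$; hence $C\subseteq^*\kappa\setminus S$, so a tail of the club $C$ misses $S$ and $S$ is nonstationary in $W$. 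Now suppose toward a contradiction that $\langle S_\alpha\mid\alpha<\eta\rangle\in\HOD^{W}$ is a partition of $\kappa$ into $\eta:=(\lambda^+)^V$ many sets, each stationary in $W$. Since $\HOD^{W}\subseteq V$, the sequence and all of its entries lie in $V$, so by the claim there is a function $i\colon\eta\to\lambda+1$ in $V$ with $S_\alpha\in U_{\kappa,i(\alpha)}$ for every $\alpha$; as the $S_\alpha$ are pairwise disjoint and each $U_{\kappa,i}$ is an ultrafilter, $i$ is injective --- but $V$ contains no injection of $(\lambda^+)^V$ into $\lambda+1$, a contradiction.

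Assembling: no $\HOD^{W}$-partition of $\kappa$ splits it into $(\lambda^+)^V$ many $W$-stationary pieces, and since $\kappa$ is inaccessible in $V$ we have $\eta=(\lambda^+)^V<\kappa$ and $(2^\eta)^{\HOD^{W}}\le(2^\eta)^V<\kappa$; so Definition~\ref{def:omega.strongly.measurable} gives that $\kappa$ is strongly measurable in $\HOD$ in $W$. The step I expect to be the main obstacle is the first one --- checking the hypotheses of Lemma~\ref{FACT:homogiter} for the iterands from \S\ref{ssec-non-stationary-iteration}, so that the length-$\kappa$ iteration $\po$ is genuinely cone homogeneous, and making sure the whole forcing is ordinal definable so Fact~\ref{fact:Levy} applies; everything after that is short. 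I would also point out that, unlike the route through Lemma~\ref{lem:sufficient.omega.strongly}, this argument never needs the $U_{\kappa,i}$ to restrict to measures of $\HOD^{W}$: the contradiction is obtained inside $V$, using only that a partition lying in $\HOD^{W}$ is a $V$-object whose entries are $V$-sets.
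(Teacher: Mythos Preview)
Your proposal is correct and follows essentially the same line as the paper's proof: both establish $\HOD^{W}\subseteq V$ via cone homogeneity of the three-step iteration, observe that any $V$-set not in $\bigcup_{i\le\lambda}U_{\kappa,i}$ is almost disjoint from the generic club $C$ and hence nonstationary in $W$, and conclude that no $\HOD$-partition of $\kappa$ into $W$-stationary sets can have more than $\lambda+1$ pieces. One small point: the homogeneity of $\po$ is not an instance of Lemma~\ref{FACT:homogiter} (which concerns Magidor-style iterations), but rather the weak homogeneity of the non-stationary support iteration recorded as item~(4) of the Fact at the end of \S\ref{ssec-non-stationary-iteration}; you correctly flagged this step as the one requiring care, and the paper simply cites that fact.
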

\begin{proof}
 Suppose $G(\co_{\F_\kappa}) \subseteq\co_{\F_\kappa}$ be a generic filter over $V[G\ast H]$.
We may identify $G(\co_{\F_\kappa})$ with its derived generic closed and unbounded set \[C = \bigcup \{ c \mid \exists A \l c,A\r \in G(\co_{\F_\kappa})\}.\]
By a standard density argument we have that for every set $X \subseteq \kappa$ in $V$, 
if $X \notin U_{\kappa,\tau}$ for all $\tau \leq \lambda$ then $|C \cap X| < \kappa$.

We conclude that for $X \subseteq\kappa$ in $V$ to be stationary in $V[G \ast H \ast C]$ it must belong to $U_{\kappa,\tau}$ for some $\tau \leq \lambda$.
It follows that if $\l S_i \mid i <\eta\r \subseteq V$ is a partition of $\kappa$ into disjoint sets which are stationary in $V[G \ast H \ast C]$ then $|\eta| \leq \lambda$. 
Moreover, since $\kappa$ is inaccessible in $V$ we have $(2^\eta)^V < \kappa$.  

Finally, we know that each poset $\po$, $\col(\lambda,<\kappa)$, and $\co_{\F_\kappa}$ is forced in turn to be cone homogeneous and clearly definable using parameters from the ground model. Therefore
$\po* \col(\lambda,<\kappa) \ast \co_{\F_\kappa}$ is cone homogeneous, and therefore $\HOD^{V[G \ast H \ast C]} \subseteq V$. The claim follows. 
\end{proof}
The result in this section is weaker than the result of section \ref{Sec-omega1}, since the club filter is not an ultrafilter in $\HOD$. Since the club filter restricted to $S^{\omega_2}_{\omega}$ is an ultrafilter in a model of $\AD + V = L(\mathbb{R})$, one can force with the $\mathbb{P}_{max}$ forcing and obtain a generic extension in which the club filter restricted to $S^{\omega_2}_{\omega}$ is an ultrafilter in $\HOD$.\footnote{We would like to thank the referee for pointing us to this fact.}  

\begin{question}
Is it consistent that the club filter restricted to $S^{\lambda}_\omega$ is an ultrafilter in $\HOD$ for a regular cardinal $\lambda > \aleph_2$?
\end{question}

By the general behavior of covering arguments, it is possible that the consistency strength of $\omega_2$ being $\omega$-strongly measurable in $\HOD$ might be much lower than the same property for other successor of a regular cardinal and even be as low as a single measurable cardinal. 
\begin{question}
What is the consistency strength of $\omega_2$ being $\omega$-strongly measurable in $\HOD$? 
\end{question}

\newpage
\section{Many $\omega$-strongly measurable cardinals}\label{Sec-Many}
Suppose that $\U$ is a coherent sequence of normal measures so that $\lambda < \kappa$ are regular cardinals and $o^{\U}(\kappa) = \lambda + 1$ and that the first measure in $\U$ is on a cardinal strictly greater than $\lambda$. 
Let $\po^{\U}$ be the non-stationary support iteration of Prikry/Magidor forcing from \cite{bunger}, and $\co_{\F^{\U}_\kappa}$ be the $\po^{\U}* \col(\lambda,<\kappa)$-name of the associated 
diagonalizing club forcing for the filter $\F^{\U}_\kappa = \bigcap_{\tau \leq \lambda} U_{\kappa,\lambda}$ on $\kappa$. 
In the next section we construct a $\po^{\U} * \col(\lambda,<\kappa)$-name of a Prikry-type forcing notion $\bar{\co}_{\F^{\U}_\kappa}$, which is equivalent to $\co_{\F_\kappa}$, and its direct extension order is $\lambda$-closed. We will use that as a black box in this section.

\begin{definition}
  Denote the post $\po^{\U} * \col(\lambda,<\kappa) * \bar{\co}_{\F_\kappa}$ by $\qo[\U]$. 
\end{definition}

We have shown in the previous section that $\qo[\U]$ is cone homogeneous and equivalent as a forcing notion to 
the iteration $\po^{\U} \ast \col(\lambda,<\kappa) \ast \co_{\F_\kappa}$. 
By theorem \ref{Thm-1stmeas} we conclude that $\kappa$ is strongly measurable in the generic extension by $\qo[\U]$. \vskip\bigskipamount

In what follows, we would like to view $\qo[\U]$ as a Prikry-type forcing whose direct extension order is $\lambda$-closed. 
This is easily possible since $\qo[\U]$ is an iteration of three posets, each of which can be seen as a Prikry-forcing whose direct extension order is $\lambda$-closed (for $\col(\lambda,<\kappa)$ we identify the direct extension order with the standard order of the poset). \vskip\bigskipamount
We finally turn to prove our main result. 

\begin{proof}(Theorem \ref{THM1})\vskip\bigskipamount
To simplify our arguments, we work over a minimal Mitchell model $V = L[\U]$ with a coherent sequence of measures $\U$ witnessing the assumed large cardinal assumption. Therefore 
$\theta$ is the least inaccessible cardinal in $V$ for which $\{ o(\kappa) \mid \kappa < \theta\}$ is unbounded in $\theta$. 
We note that all normal measures in this model appear on the main sequence $\U$, in particular, $o(\kappa) = o^{\U}(\kappa)$ for all $\kappa$. 
We also record here that by Mitchell Covering Theorem and the fact $\theta$ is not measurable,  there is no generic extension of $V = L[\U]$ which preserves the cardinals below $\theta$ and changes the cofinality of $\theta$.
Similarly, the Mitchell Covering Theorem guarantees that generic extensions of $V = L[\U]$ satisfy the Weak Covering Lemma with respect to $V$, which implies that successors of singular cardinals cannot be collapsed. 


Let $\langle \kappa_\alpha \mid \alpha < \theta\rangle$ be an increasing sequence of cardinals below $\theta$, which satisfies the following conditions:
\begin{enumerate}
\item $\kappa_0 = \omega$, $\kappa_1$ is the least measurable,
\item for a limit ordinal $\alpha$, $\kappa_\alpha = \left(\sup_{\beta < \alpha} \kappa_{\beta}\right)^+$,
\item for a successor ordinal $\alpha + 1$ let $\kappa_{\alpha + 1}$ is the least cardinal such that $o^{\mathcal{U^\alpha}}(\kappa_{\alpha + 1}) = \kappa_{\alpha} + 1$, for the coherent sequence of measures $\U^\alpha = \U\uhr_{(\kappa_\alpha,\kappa_{\alpha+1}]}$. In particular, the first measure of the sequence $\U^\alpha$ has critical point $> \kappa_\alpha$.
\end{enumerate}

We define by induction on $\alpha < \theta$ a Magidor iteration $\po = \l \po_\alpha, \qo_\alpha \mid \alpha < \theta\r$ of Prikry type forcings. Our description of the Magidor style iteration follows  Gitik's handbook chapter, \cite{Gitik-HB}. We recall that conditions are sequences of the form $\langle q_\alpha \mid \alpha < \theta\rangle$ where only finitely many coordinates are not a direct extension of the weakest condition $0_{\qo_\alpha}$. 
Let $\mathbb{Q}_{0}$ be $\col(\omega, <\kappa_1) * \co^*_{\F_{\kappa_1}}$, where $\F_{\kappa_1}$ is the filter generated from the normal measure on $\kappa_1$. For $\alpha > 0$, we define $\qo_{\alpha} = \qo[\U^\alpha]$. 

The coherent sequence $\U^\alpha$ from $L[\U]$ uniquely extends in a generic extension by $\po_\alpha$, and can therefore be used to force with $\qo[\U^\alpha]$. This is because as $L[\U]$ satisfies the $\GCH$, we have that $|\po_\alpha| \leq \kappa_\alpha$ and all measures of $\U^\alpha$ are assumed to have critical points strictly above $\kappa_\alpha$. 
It is clear from our definitions that $\qo_\alpha$ satisfies the  Prikry Property, that  its direct extension order  is $\kappa_\alpha$-closed, and that $\mathbb{Q}_\alpha$ is forced to be cone homogeneous.

By the general theory of Magidor iteration of Prikry type posets, the iteration $\mathbb{P}_{\theta}/ \po_1$ also satisfy the Prikry Property. Moreover, for every $\alpha < \theta$, $\mathbb{P}_{\theta} / \mathbb{P}_\alpha$ has the Prikry Property in the generic extension by $\mathbb{P}_{\alpha}$, and its direct extension order is $\kappa_\alpha$-closed (see \cite{Gitik-HB} for details). 

\begin{claim}
Every bounded subset of $\kappa_{\alpha}$ is introduced by $\po_{\alpha}$. Moreover, in the generic extension by $\mathbb{P}_{\theta}$, $\kappa_{\alpha}$ is a regular cardinal for all $\alpha < \theta$.
\end{claim}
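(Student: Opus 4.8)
The claim has two parts. First, that every bounded subset of $\kappa_\alpha$ is added by $\po_\alpha$; second, that $\kappa_\alpha$ remains regular in the full extension by $\po_\theta$. I would prove the first part by invoking the factorization $\po_\theta \cong \po_\alpha \ast (\po_\theta/\po_\alpha)$ and then arguing that the tail forcing $\po_\theta/\po_\alpha$ does not add bounded subsets of $\kappa_\alpha$. For this, recall the facts already assembled in the excerpt: the tail $\po_\theta/\po_\alpha$ is a Prikry-type forcing whose direct extension order $\leq^*$ is $\kappa_\alpha$-closed. The standard Prikry-type argument then applies: given a $\po_\theta/\po_\alpha$-name $\name{f}$ for a function from some $\delta < \kappa_\alpha$ into the ordinals and a condition $p$, one builds a $\leq^*$-increasing sequence of direct extensions of $p$, of length $\delta$, each deciding $\name{f}(\xi)$ for successive $\xi < \delta$; by $\kappa_\alpha$-closure of $\leq^*$ these fuse into a single direct extension deciding $\name{f}$ entirely, so $\name{f}$ is already in the $\po_\alpha$-extension. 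Hence bounded subsets of $\kappa_\alpha$ in the $\po_\theta$-extension all lie in the $\po_\alpha$-extension; and within $\po_\alpha = \po_\alpha$, which is itself an iteration with the corresponding $\leq^*$-closure above each stage, one further pushes the witness down, but for the claim as stated it suffices that they appear in the $\po_\alpha$-extension.

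**Regularity of $\kappa_\alpha$.** For the second part, fix $\alpha < \theta$ and suppose toward a contradiction that $\kappa_\alpha$ is singular (or collapsed) in $V[\po_\theta]$. First handle the case $\alpha$ limit: then $\kappa_\alpha = (\sup_{\beta<\alpha}\kappa_\beta)^+$ in $V = L[\U]$, and $\sup_{\beta<\alpha}\kappa_\beta$ is singular. By the Mitchell Covering Theorem and the Weak Covering Lemma for $L[\U]$ — explicitly invoked in the proof of Theorem \ref{THM1} above — successors of singular cardinals cannot be collapsed in generic extensions of $L[\U]$, so $\kappa_\alpha$ stays a cardinal; and it is a successor cardinal, hence regular. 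For $\alpha$ a successor, say $\alpha = \beta+1$, the cofinality of $\kappa_\alpha$ could only be changed by the forcing, so it suffices to show $\po_\theta$ adds no cofinal map into $\kappa_\alpha$ of length $<\kappa_\alpha$. By the first part of the claim (and its proof), any such map would be added by $\po_\alpha$; but $\po_\alpha = \po_\beta \ast \qo_\beta$ with $\qo_\beta = \qo[\U^\beta]$, whose components are either $\col(\kappa_\beta, <\kappa_\alpha)$ (which, being $\kappa_\beta$-closed and of size $\kappa_\alpha$, preserves $\kappa_\alpha = \kappa_\beta^+$... wait, $\col(\kappa_\beta,<\kappa_\alpha)$ does collapse the interval, but $\kappa_\alpha$ is exactly $\kappa_\beta^+$ after the collapse, since $|\col(\kappa_\beta,<\kappa_\alpha)| = \kappa_\alpha$ — the collapse is $\kappa_\alpha$-c.c.). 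So in the $\po_\alpha$-extension $\kappa_\alpha$ is still regular. Combined with the first part — bounded subsets of $\kappa_\alpha$ come from $\po_\alpha$ and $\po_\theta/\po_\alpha$ adds no new subsets of $\kappa_\alpha$-bounded sets nor new cofinal maps below $\kappa_\alpha$ — we conclude $\kappa_\alpha$ is regular in $V[\po_\theta]$.

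**Main obstacle.** The delicate point is the fusion argument for the tail forcing $\po_\theta/\po_\alpha$: I am relying on the fact, stated in the excerpt for the non-stationary-support iteration and for Magidor iterations of Prikry-type posets, that the quotient is Prikry-type with a $\kappa_\alpha$-closed direct extension order. One must be careful that this closure is genuinely $\kappa_\alpha$-closed and not merely $<\kappa_\alpha$-strategically-closed, and that the relevant Prikry lemma for the \emph{iteration} (not just a single $\qo_\beta$) is available — but this is exactly what \cite{Gitik-HB} and \cite{bunger} provide, and the excerpt explicitly records ``its direct extension order is $\kappa_\alpha$-closed (see \cite{Gitik-HB} for details)'' for $\po_\theta/\po_\alpha$. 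The other subtle point is bookkeeping around which cardinals $\qo_\beta$ collapses: $\qo_\beta$ collapses the interval $(\kappa_\beta,\kappa_\alpha)$ to $\kappa_\beta$, making $\kappa_\alpha$ the successor of $\kappa_\beta$, so one must verify $\kappa_\alpha$ is not itself collapsed — which follows from the $\kappa_\alpha$-c.c. of $\col(\kappa_\beta,<\kappa_\alpha)$ together with the $\kappa_\alpha$-c.c. (or appropriate chain-condition / distributivity) of the remaining components $\co^*_{\F_{\kappa_\alpha}}$, established in Proposition \ref{prop1} and the surrounding discussion. I would state these inputs explicitly and let the fusion argument do the rest.
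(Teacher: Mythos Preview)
Your proposal is correct and follows essentially the same approach as the paper: the first part via the Prikry property and $\kappa_\alpha$-closure of $\leq^*$ on the tail, the limit case via Weak Covering over $L[\U]$, and the successor case by analyzing $\po_\alpha = \po_\beta \ast \qo_\beta$ and invoking Proposition~\ref{prop1}. Two minor omissions to tighten: in the successor case you should note $|\po_\beta| \leq (2^{\kappa_\beta})^V < \kappa_\alpha$ (so $\po_\beta$ cannot singularize $\kappa_\alpha$), you should not forget the $\po^{\U^\beta}$ component of $\qo_\beta = \po^{\U^\beta} \ast \col(\kappa_\beta,<\kappa_\alpha) \ast \bar{\co}_{\F_{\kappa_\alpha}}$, and you should make explicit that applying Proposition~\ref{prop1} requires the inductive hypothesis that $\kappa_\beta$ is still regular after $\po_\beta$.
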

\begin{proof}
The first assertion is an immediate consequence of the fact $\po_\theta/ \po_\alpha$ satisfies the Prikry Property and its direct extension order is $\kappa_\alpha$ closed. 
It follows that in order to show that all cardinal $\kappa_\alpha$ remain regular in a generic extension by $\po_\theta$, it suffices to show that 
$\kappa_\alpha$ remains regular in the intermediate generic extension by $\po_\alpha$. We prove the last assertion by induction on $\alpha < \theta$. 

For a limit ordinal $\alpha$, the assertion follows from the fact that the generic extension by $\po_\alpha$ satisfies the Weak Covering property with respect to the ground model $V = L[\U]$. Indeed, $\kappa_{\alpha} = \left(\sup_{\beta < \alpha} \kappa_{\beta}\right)^+$ cannot be collapsed without collapsing a tail of the cardinals $\kappa_\beta$, $\beta < \alpha$, which would contradict our inductive assumption. 

Suppose now that $\alpha$ is a successor ordinal. Then the forcing $\po_{\alpha}$ naturally breaks into two parts $\po_\alpha \cong \po_{\alpha - 1} * \qo_{\alpha-1}$. The size of $\po_{\alpha - 1}$ is $\left(2^{\kappa_{\alpha-1}}\right)^V < \kappa_\alpha$, and cannot singularize $\kappa_{\alpha}$. The second poset $\qo_{\alpha-1}$ does not collapse $\kappa_{\alpha}$ by Proposition \ref{prop1}. Note that in order to apply the result of Proposition \ref{prop1} we use our inductive hypothesis that $\kappa_{\alpha-1}$ remain regular in a generic extension by $\po_{\alpha-1}$.
\end{proof}

\begin{claim}
In the generic extension $\theta$ is regular.
\end{claim}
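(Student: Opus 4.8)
The plan is to show that $\theta$ is a cardinal of the generic extension and then that it is regular there. For the first point, an easy induction on $\alpha<\theta$ using clauses (1)--(3) of the definition of $\langle\kappa_\alpha\mid\alpha<\theta\rangle$ gives $\kappa_\alpha\ge\alpha$, so $\theta=\sup_{\alpha<\theta}\kappa_\alpha$; since each $\kappa_\alpha$ is a regular cardinal of $V[G_\theta]$ by the previous claim, $\theta$ is a limit of cardinals of $V[G_\theta]$ and hence a cardinal there.

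Now suppose toward a contradiction that $\cf^{V[G_\theta]}(\theta)=\delta<\theta$, and fix a $\mathbb{P}_\theta$-name $\dot f$ and a condition $p\in G_\theta$ with $p\force\dot f\colon\check\delta\to\check\theta$ cofinal. Pick $\alpha_0<\theta$ large enough that $\delta<\kappa_{\alpha_0}$ and that the (finite) Prikry-support of $p$ is contained in $\alpha_0$; write $p=(p\uhr\alpha_0)\fr p^{\mathrm{tail}}$ and pass to the intermediate model $V[G_{\alpha_0}]$, where $p\uhr\alpha_0\in G_{\alpha_0}$. Since $|\mathbb{P}_{\alpha_0}|<\theta$ (computed as in the proof of the previous claim, using $\GCH$ in $L[\U]$ and the inaccessibility of $\theta$), $\theta$ remains inaccessible, in particular regular, in $V[G_{\alpha_0}]$. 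We work there with the tail forcing $\mathbb{R}=\mathbb{P}_\theta/\mathbb{P}_{\alpha_0}$, which by the cited properties of Magidor iterations has the Prikry Property and whose direct extension order $\le^*$ is $\kappa_{\alpha_0}$-closed; note that $\dot f$ reinterprets as an $\mathbb{R}$-name and $p^{\mathrm{tail}}\in\mathbb{R}$ has empty Prikry-support.

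The heart of the argument is a \emph{reflection of names} for $\mathbb{R}$: for every $\mathbb{R}$-name $\dot\tau$ for an ordinal below $\theta$ and every $q\in\mathbb{R}$ there are a direct extension $q^*\ge^* q$ and an ordinal $\gamma<\theta$ with $q^*\force\dot\tau<\check\kappa_\gamma$. Granting this, I would build a $\le^*$-increasing sequence $\langle q_\xi\mid\xi\le\delta\rangle$ of conditions below $p^{\mathrm{tail}}$: at successor steps apply the reflection step to the $\mathbb{R}$-name of $\dot f(\xi)$ to get $q_{\xi+1}\ge^* q_\xi$ and $\gamma_\xi<\theta$ with $q_{\xi+1}\force\dot f(\check\xi)<\check\kappa_{\gamma_\xi}$, and at limit steps take a $\le^*$-upper bound, which exists as $\delta<\kappa_{\alpha_0}$ and $\le^*$ is $\kappa_{\alpha_0}$-closed. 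As $\theta$ is regular in $V[G_{\alpha_0}]$ and $\delta<\theta$, the ordinal $\gamma^*=\sup_{\xi<\delta}\gamma_\xi$ is below $\theta$, and $q_\delta\force\rng(\dot f)\subseteq\check\kappa_{\gamma^*}$. Then $(p\uhr\alpha_0)\fr q_\delta$ extends $p$ and forces $\dot f$ to be simultaneously cofinal in $\check\theta$ and bounded by $\check\kappa_{\gamma^*}<\check\theta$ --- absurd. Hence $\theta$ is regular in $V[G_\theta]$.

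The main obstacle is the reflection-of-names step, which is exactly the ``fusion-type'' property of the Magidor iteration alluded to in the preliminaries. One proves it by working simultaneously at all coordinates $\ge\alpha_0$, using the Prikry Property of each iterand $\mathbb{Q}_\beta$ together with the fact that the Prikry/Magidor and Levy-collapse data living at coordinate $\beta$ concern only ordinals $\le\kappa_{\beta+1}$; a suitable diagonal direct extension of $q$ then confines the value of $\dot\tau$ below a fixed stage, and this is carried out in the handbook treatment of Magidor iterations (\cite{Gitik-HB}). Equivalently, the same diagonalization shows that $\dot f$ is decided already by $G\uhr\gamma^*$ for some $\gamma^*<\theta$, i.e.\ $f\in V[G_{\gamma^*}]$, whence the contradiction follows from $|\mathbb{P}_{\gamma^*}|<\theta$ and the inaccessibility of $\theta$ in $V$.
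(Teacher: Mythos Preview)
Your approach is entirely different from the paper's: the paper disposes of this claim in one line by invoking the Mitchell Covering Theorem. The point, recorded at the very start of the proof of Theorem~\ref{THM1}, is that the ground model was deliberately chosen to be $V=L[\U]$; since $\theta$ is not measurable there, Mitchell covering guarantees that no cardinal-preserving extension of $L[\U]$ can change the cofinality of $\theta$. The previous claim supplies the cardinal preservation below $\theta$, and that is all that is needed.

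Your forcing-internal argument, by contrast, rests entirely on the ``reflection of names'' step, and that step is a genuine gap. You assert that for every $\mathbb{R}$-name $\dot\tau$ of an ordinal below $\theta$ and every $q$ there is a \emph{direct} extension $q^*\geq^* q$ and a bound $\gamma<\theta$ with $q^*\Vdash\dot\tau<\check\kappa_\gamma$, but you do not prove it, and it is not a standard fact about Magidor iterations at a merely inaccessible $\theta$. The Prikry Property lets you decide, for each fixed $\gamma$, whether $\dot\tau<\kappa_\gamma$; it does not let you find a single direct extension that works for some $\gamma$. A naive attempt to iterate such decisions runs into the obstacle that $\leq^*$ on the tail is only $\kappa_{\alpha_0}$-closed, far short of the $\theta$ many steps one would need. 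Your appeal to a ``diagonal direct extension'' and to \cite{Gitik-HB} does not help: the fusion machinery there (and the one alluded to in the preliminaries) concerns the non-stationary support iteration of \cite{bunger}, not the outer Magidor iteration, and the handbook does not contain a general theorem that a Magidor-style iteration preserves the regularity of an arbitrary inaccessible length. Likewise, a $\Delta$-system argument for $\theta$-c.c.\ is not immediate here, since the iterands $\qo_\alpha$ are themselves composite (they contain the non-stationary support iteration, a collapse, and a club-shooting component), and two conditions whose finite supports are disjoint need not be compatible.

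In short: the paper sidesteps exactly the difficulty you are trying to push through, by exploiting the choice $V=L[\U]$ and Mitchell covering. If you want a purely forcing-theoretic proof you would have to actually establish either a suitable chain condition for $\mathbb{P}_\theta$ or a genuine fusion lemma tailored to this iteration; neither is supplied, and neither is routine.
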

\begin{proof}
This follows from the Mitchell Covering Theorem and the smallness assumption of $\theta$, as was mentioned at the beginning of the proof. 
\end{proof}

\begin{claim}
 $\po$ is cone homogeneous.
\end{claim}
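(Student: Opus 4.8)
The plan is to invoke Lemma~\ref{FACT:homogiter}. By construction, $\po = \langle \po_\alpha, \qo_\alpha \mid \alpha < \theta\rangle$ is a Magidor iteration of Prikry-type posets, and each $\qo_\alpha$ — a finite iteration of Prikry-type posets ($\po^{\U^\alpha}$, the Levy collapse with its trivial direct extension order, and $\bar{\co}_{\F_{\kappa_{\alpha+1}}}$) — has a $\kappa_\alpha$-closed direct extension order, as noted when $\po$ was defined. So it suffices to verify the two hypotheses of that lemma: (i) $\qo_\alpha$ together with $\leq_{\qo_\alpha}$ and $\leq^*_{\qo_\alpha}$ is ordinal definable in $V$, and (ii) $0_{\po_\alpha}$ forces that any two conditions of $\qo_\alpha$ have direct extensions between whose cones there is an isomorphism respecting $\leq^*_{\qo_\alpha}$.

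Hypothesis (i) is immediate from $V = L[\U]$: the coherent sequence $\U$ is ordinal definable there (it is the canonical sequence witnessing the minimality of $\theta$, and in any case $L[\U]$ has an ordinal-definable well-order), hence the recursively defined sequence $\langle \kappa_\alpha \mid \alpha<\theta\rangle$ and each restriction $\U^\alpha = \U\uhr_{(\kappa_\alpha,\kappa_{\alpha+1}]}$ are ordinal definable; and $\qo_\alpha$ together with its orderings, being obtained from the ordinal-definable parameter $\U^\alpha$ (for $\alpha = 0$, from the normal measure on $\kappa_1$) by a fixed formula, is ordinal definable as well. Since these definitions invoke only a ground-model parameter together with the generic filter in a canonical way, cone automorphisms of $\po_\alpha$ leave the name for $\qo_\alpha$ and its orderings unchanged, as is assumed in the set-up of Subsection~\ref{Sec:homog}.

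For hypothesis (ii), fix first $\alpha > 0$. It is forced by $0_{\po_\alpha}$ that $\qo_\alpha$ is $\qo[\U^\alpha]$ as computed in $V[G_\alpha]$. As observed in the construction, $\U^\alpha$ extends uniquely to a coherent sequence of normal measures in $V[G_\alpha]$, still with $o^{\U^\alpha}(\kappa_{\alpha+1}) = \kappa_\alpha + 1$ and with the first measure on a cardinal strictly above $\kappa_\alpha$; and by the first Claim above $\kappa_\alpha$ remains regular in $V[G_\alpha]$. Thus the hypotheses of Lemma~\ref{Lem:Q[U].weakly.homogeneous} hold with $V[G_\alpha]$ in place of the ground model, so that lemma yields, for any $w_0, w_1 \in \qo_\alpha$, direct extensions $w_0^*, w_1^*$ and a cone isomorphism $\qo_\alpha/w_0^* \to \qo_\alpha/w_1^*$ respecting $\leq^*_{\qo_\alpha}$ — the passage from the $\co^*$-version stated there to the $\bar{\co}$-version present in $\qo[\U^\alpha]$ being carried out inside the proof of that lemma, using that the identity projects $\co^*_{\F_\kappa}$ onto $\bar{\co}_{\F_\kappa}$ and that the two share their direct extension order. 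This is precisely (ii) at stage $\alpha$. For $\alpha = 0$, $\qo_0 = \col(\omega,<\kappa_1) \ast \co^*_{\F_{\kappa_1}}$, and (ii) follows from the homogeneity of the Levy collapse together with the cone homogeneity of $\co_{\F_{\kappa_1}} = \co_U$ (Lemma~\ref{Lem0:CHomog}, to which $\co^*_{\F_{\kappa_1}}$ is forcing-equivalent), noting that the cone isomorphisms constructed in the proofs of Lemmas~\ref{Lem0:CHomog}, \ref{Lem:CHomog} and~\ref{Lem:Q[U].weakly.homogeneous} respect the relevant (here essentially trivial) direct extension orders. With (i) and (ii) established, Lemma~\ref{FACT:homogiter} gives that $\po$ is cone homogeneous.

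The step demanding the most care is (ii): one must make sure that the hypotheses of Lemma~\ref{Lem:Q[U].weakly.homogeneous} — which concern a ground model carrying a coherent sequence with $o(\kappa) = \lambda + 1$ whose first measure has critical point above $\lambda$ — genuinely survive the passage from $V$ to the intermediate extension $V[G_\alpha]$ (that is, that $\kappa_\alpha$ stays regular and that $\U^\alpha$ retains its relevant structure there), and that viewing the three-step poset $\qo[\U^\alpha]$ as a single Prikry-type forcing with $\kappa_\alpha$-closed direct extension is compatible with the coordinate-by-coordinate construction underlying Lemma~\ref{FACT:homogiter}. The latter point is routine, since a finite iteration of Prikry-type posets whose direct extension orders are $\kappa_\alpha$-closed is again of this form.
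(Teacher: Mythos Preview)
Your proof is correct and follows essentially the same approach as the paper: both reduce the claim to verifying hypotheses (i) and (ii) of Lemma~\ref{FACT:homogiter}, invoking $V = L[\U]$ for (i) and Lemma~\ref{Lem:Q[U].weakly.homogeneous} for (ii). Your version is more detailed --- you treat the base case $\alpha = 0$ separately and spell out why the hypotheses of Lemma~\ref{Lem:Q[U].weakly.homogeneous} persist in the intermediate extension $V[G_\alpha]$ --- whereas the paper's argument is a terse two-line appeal to the same two lemmas.
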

\begin{proof}
It suffices to verify conditions (i),(ii) of Lemma \ref{FACT:homogiter} hold for every $\alpha < \kappa$. 
(i) holds for $\qo_\alpha = \qo[\U\uhr_{(\kappa_\alpha,\kappa_{\alpha+1}]}]$ since $\qo_\alpha,\leq_{\qo_\alpha},\leq^*_{\qo_\alpha}$ are clearly definable in $V = L[\U]$ from $\U$, $\kappa_\alpha$, and $\kappa_{\alpha+1}$.
The fact $(\qo_\alpha,\leq_{\qo_\alpha},\leq^*_{\qo_\alpha})$ is an immediate consequence of  Lemma \ref{Lem:Q[U].weakly.homogeneous}.
\end{proof}

Let $G_\theta \subseteq \po_\theta$ be a generic filter over  $V$. We conclude that $\HOD^{V[G_\theta]} \subseteq V$.\footnote{As a matter of fact $\HOD^{V[G_\theta]} = V$ since $V = L[\U]$ is ordinal definable in $V[G_\theta]$. We will not used this fact here.}  Moreover, for each $\alpha < \kappa$, the $\qo[\U^\alpha]$ generic filter induced by $G_\theta$ guarantees that $\kappa_{\alpha+1} = (\kappa_\alpha^+)^{V[G_\theta]}$ and that $\kappa_\alpha$ is strongly measurable in $\HOD^{V[G_\theta]}$. 
It follows that all successors of regular cardinals below $\theta$ in $V[G_\theta]$ are strongly measurable in $\HOD$. Since $\theta$ remains strongly inaccessible in $V[G_\theta]$ and all the relevant witnessing objects clearly belong to $V_\theta^{V[G_\theta]}$, we conclude that 
in $V_\theta^{V[G_\theta]}$, all successors of regular cardinals are strongly measurable. 
\end{proof}
\section{Embedding $\co_{\F_\kappa}$ in suitable Prikry-type forcings}\label{Sec-PrikryEmbedding}
The method of the previous section can be iterated finitely many times in order to get finitely many successive $\omega$-strongly measurable cardinals. In order to get a global result (or even just infinitely many $\omega$-strongly measurables) we need to have a preservation of distributivity under iterations. 

This is, in general, a difficult task. One way to obtain this is by shifting our goal from preserving distributivity into preserving the Prikry Property. There are several ways to iterate Prikry type forcings and preserve the Prikry Property as well as the closure properties of the direct extension. Thus, embedding the distributive forcings into a Prikry type forcing can be used in order to get a suitable distributivity of the iteration. Usually, in order to achieve this, some strong compactness assumption is made that enables one to embed any sufficiently distributive forcing into a Prikry type forcing. See \cite{Gitik-Compact, BenHamouHayutGitik}, for some examples for the consistency strength of such constructions.   

Our goal is to embed $\co_{\F_\kappa}$ into a Prikry type forcing without increasing our large cardinal hypothesis from $o(\kappa)= \lambda+1$. For this, our approach follows the finer technique, introduced by Gitik in \cite{Gitik-ClubOnRegs}.

This section is devoted to prove the following technical lemma:
\begin{proposition}\label{proposition:embedding-C-into-prikry-type}
Let us assume that $\mathcal{U}$ is a coherent measure sequence witnessing $o(\kappa) = \lambda+1$. Let $\mathbb{P}^\mathcal{U}$ be the non-stationary support iteration of Subsection \ref{ssec-non-stationary-iteration}. Then in $\mathbb{P}^{\mathcal{U}} \ast \col(\lambda,<\kappa)$ there is a Prikry-type forcing notion $\bar{\co}_{\F_\kappa}$, whose direct extension order is $\lambda$-closed and it has a dense subset isomorphic to $\mathbb{C}_{\F_\kappa}$. 

Moreover, both orders $\leq$ and $\leq^*$ witness the forcing $\mathbb{P}^{\mathcal{U}} \ast \col(\lambda,<\kappa) \ast \bar{\co}_{\F_\kappa}$ to be cone homogeneous.
\end{proposition}
The proof of the first part of the proposition is given in Corollary \ref{cor:bar-C-prikry-type} and the proof of the moreover part appears in Lemma \ref{Lem:Q[U].weakly.homogeneous}.
Let us sketch the main ideas behind to proof of the proposition. In order to construct a Prikry type forcing that projects onto $\mathbb{C}_{\F_\kappa}$ we first work in the generic extension in which $\kappa$ is singularized to be of cofinality $\lambda$. In this model, the Magidor sequence is already a closed unbounded set that diagonalizes the filter $\F_\kappa$, so we can use it as a guide to the generic of $\co_{\F_\kappa}$. This means that there is a projection from the generic extension by the singularizing forcing iterated by a $\lambda$-closed forcing onto $\co_{\F_\kappa}$. $\bar{\co}_{\F_\kappa}$ is obtained by "forgetting" the Magidor sequence and keeping the diagonalizing club. A technical issue that arise when trying to pull up this strategy is that the singularizing forcing must be defined \emph{after} the cardinals between $\lambda$ and $\kappa$ were collapsed, and a major part of this section is devoted to developing this forcing. 

The rest of this section is organized as follows: in subsection \ref{subsection:q-star} we review the basic construction and properties of the tree Prikry-type forcings $\qo^\tau_\kappa$, $\tau \leq o^{\U}(\kappa)$ which is defined in the generic extension by $\po$. Then, we introduce a filter-based variant $\qo_{\kappa,\tau}^*$ to be forced over a generic extension $V[G \ast H]$ by  $\po \ast \col(\lambda,<\kappa)$ extension  $V[G \ast H]$ of $V$, where $\kappa = \lambda^+$ is no longer measurable. 

In subsection \ref{subsection:c-star}, we use the posets $\qo_{\kappa,\tau}^*$, $\tau \leq \lambda$ in order to introduce a forcing equivalent 
$\bar{\co}_{\F_\kappa}$ of $\co_{\F_\kappa}$ with a dense Prikry-type sub-forcing $\co^*_{\F_\kappa}$ whose direct extension order is $\lambda$-closed. 

This completes the proof of Section \ref{Sec-Many}, as the posets $\bar{\co}_{\F_\kappa}$ can be iterated on different cardinals to construct models with many $\omega$-strongly measurable cardinals. 

\subsection{The forcing $\qo^*_{\kappa,\tau}$}\label{subsection:q-star}
We turn back to consider our forcing scenario with $\F_{\kappa}$ over $V[G\ast H]$, where $G \subseteq \po$ is generic over $V$, and $H \subseteq \col(\lambda,<\kappa)$ is generic over
$V[G]$. Recall that $o^{\U}(\kappa) = \lambda+1$ and that each measure $U_{\kappa,\tau}$, $\tau \leq \lambda$ in $V$, extends in $V[G]$ to $U_{\kappa,\tau}(t)$, where $t$ is $\tau$-coherent. 
For each such $\tau \leq \lambda$ and $t$, let $j_{\kappa,\tau,t}\colon V[G] \to M_{\kappa,\tau,t}$ be the ultrapower embedding of $V[G]$ by $U_{\kappa,\tau}(t)$. 

Moving to the further generic extension $V[G \ast H]$ of $V[G]$,  $\kappa$ is no longer measurable. Let $F_{\kappa,\tau}(t)$ denote the filter generated by 
$U_{\kappa,\tau}(t)$ on $\power(\kappa)^{V[G]}$ and $F_{\kappa,\tau}(t)^+$ denote the poset on $\power(\kappa)^{V[G]}$ of  $F_{\kappa,\tau}(t)$ positive sets where a set $A$ is stronger than 
$B$ if $A \setminus B$ belongs to the dual ideal of $F_{\kappa,\tau}(t)$. 

By further forcing with the collapse quotient \[\mathbb{R}_{\tau} = \col(\lambda,<j_{\kappa,\tau,t}(\kappa))/H \cong \col(\lambda, [\kappa, j_{\kappa,\tau,t}(\kappa)),\] over $V[G \ast H]$, producing a generic filter $H_{\tau}^* \subseteq \col(\lambda,<j_{\kappa,\tau,t}(\kappa))$, with $H_{\tau}^* \uhr \col(\lambda,<\kappa) = H$,  the elementary embedding $j_{\kappa,\tau,t}$ extends into 
\[j^*_{\kappa,\tau,t} \colon V[G \ast H] \to M_{\kappa,\tau,t}[H_\tau^*].\] 

In turn, the embedding $j^*_{\kappa,\tau,t}$ generates a $V[G \ast H]$ ultrafilter $U_{\kappa,\tau}(t)^* \subseteq F_{\kappa,\tau}(t)^+$, which is an $F_{\kappa,\tau}(t)^+$-generic ultrafilter over $V[G \ast H]$, by standard arguments connecting forcing with positive sets and generic ultrapowers.\footnote{Indeed, one can verify that the trivial condition in $\col(\lambda, [\kappa, j_{\kappa,\tau,t}(\kappa))$ forces $\kappa \in j(\dot{X})$ if and only if there is a subset of $\dot{X}$ in $U_{\kappa,\tau}(t)$.} 
Since the poset $\mathbb{R}_{\tau} = \col(\lambda,<j_{\kappa,\tau,t}(\kappa))/H$ is $\lambda$-closed in $V[G \ast H]$, we have that $F_{\kappa,\tau}(t)^+$ has a $\lambda$-closed dense sub-forcing $D_{\kappa,\tau,t}$. Other examples of applications of Prikry-type forcings generated by ideals can be found in  \cite{MSI} and \cite{MSLB}.

It would be useful for our purposes to work with a concrete description of the sets in $D_{\kappa,\tau,t}$.
We proceed to introduce the relevant notions.
\begin{definition}
Let $G \subseteq \po$ be a generic filter over $V$. For each cardinal $\nu < \kappa$ with $o^{\U}(\nu) > 0$, let $b_\nu$ be the $G$-induced generic cofinal sequence in $\nu$.
 \begin{enumerate}
  \item Recall that every finite coherent sequence $t = \l \nu_0,\dots, \nu_{k-1}\r \in [\kappa]^{<\omega}$ in $V[G]$, has an assigned closed unbounded set $b_t = \cup_{i<k} (b_{\nu_i} \cup \{\nu_i\})$. 
  For a coherent sequence $t$ and a finite set of ordinals $s \in [\min(t)]^{<\omega}$, we define 
  $\pi^s(t) = \min(b_t \setminus (\max(s)+1))$. 
  When $t = \l \nu\r$ has a single element, we will often abuse this definition and write $\pi^s(\nu)$ for $\pi^s(\l \nu \r)$. \vskip\bigskipamount
 For every $\eta$, the function
 \[\pi^s_{\eta}(\nu) = \min( \{ \mu \in b_\nu \setminus (\max(s)+1) \mid o^{\U}(\mu) = \eta\})\] 
 defines a Rudin-Keisler projection from 
 $U_{\kappa,\tau}(s)$ to $U_{\kappa,\eta}(s)$, for all $ \tau > \eta$. In particular $\pi^s = \pi^s_0 \colon \kappa \to \kappa$ is a Rudin-Keisler projection of  $U_{\kappa,\tau}(s)$ to its normal projected measure 
 $U_{\kappa,0}(s)$, for every $\tau \geq 0$. \vskip\bigskipamount
 

  \item Let $T \subseteq [\kappa]^{<\omega}$ a tree, $t \in [\kappa]^{<\omega}$, and  $Q\colon T \to \col(\lambda,<\kappa)$ be a function.
  We say that $Q$ is \textbf{$(T,t)$-suitable} if for every 
  $s \in T$ we have
  \begin{itemize}
   \item $Q(s) \in \col(\lambda,<\kappa)$, and  
   \item for every $s' \in T$ that extends $s$,  $Q(s')\uhr \lambda \times \pi_0^{t \fr s}(s') = Q(s)$.
  \end{itemize}
  
  For every $s \in T$ we define $Q_s$ to be the induced function on  $T_s = \{ r \in [\kappa]^{<\omega} \mid s \fr r \in T\}$, given by 
  \[Q_s(r) = Q(s \fr r).\]
  
  \item Suppose that $H \subseteq \col(\lambda,<\kappa)$ is generic over $V[G]$,
  $T,Q \in V[G]$ as above, and let $A = \succ_T(\emptyset)$. We define in $V[G][H]$ the set $Q$-generic restriction of $A$ with respect to $H$, to be the set 
  \[A^H_Q = \{ \nu \in A \mid Q(\nu) \in H\}.\] 
 \end{enumerate}
\end{definition}

Let $G \subseteq \po$ be generic over $V$ and $H \subseteq \col(\lambda,<\kappa)$ be a generic over $V[G]$. 
In $V[G]$, for all $\tau \leq \lambda$ and $t \in [\kappa]^{<\omega}$, 
the function $\pi^t = \pi^t_0$ represents $\kappa$ in the ultrapower by $U_{\kappa,\tau}(t)$. 
It is therefore immediate from our definition of $D_{\kappa,\tau,t} \subseteq F_{\kappa,\tau}(t)^+$ that sets in $D_{\kappa,\tau,t}$ are of the form 
as $A^H_{r} = \{ \nu \in A \mid r(\nu) \in H\}$ where 
 $A \in U_{\kappa,\tau}(t)$, and $r\colon A \to \col(\lambda,<\kappa)$ 
satisfying $r(\nu) \in \col(\lambda,[\pi^t(\nu),\kappa))$ for all $\nu \in A$.\footnote{i.e., $\dom(r(\nu)) \subseteq \lambda \times (\kappa \setminus \pi^t(\nu))$.}


  
We use these facts to introduce a variant of Gitik's forcing $\mathbb{Q}_{\kappa,\tau}$ in $V[G \ast H]$. The following poset $\qo^*_{\kappa,\tau}$ collapses cardinals up to $\kappa^+$ to $\lambda$ and adds a cofinal Magidor sequence $b_{\kappa}$ of length $\omega^{\tau}$ to $\kappa$, which diagonalizes the filter $\bigcap_{\tau' < \tau} U_{\kappa,\tau'}$ (i.e., $b_\kappa$ is almost contained in each filter set). 

\begin{definition}\label{def:Q*poset}
In $V[G \ast H]$ the forcing $\qo^*_{\kappa,\tau}$ consists of all $(t, T, Q) \in V[G]$ such that:
\begin{enumerate}
\item $t$ is a $\tau$-coherent finite sequence of ordinals below $\kappa$,
\item $T$ is a tree of $\tau$-coherent finite sequences with stem $t$,
\item $Q$ is a $(T,t)$-suitable function, 
\item $Q(\emptyset) \in H$, and
\item\label{condition:agreement}  For every $s, s' \in T$, if $b_{t \fr s} = b_{t \fr s'}$ then $Q(s) = Q(s')$. 
\end{enumerate}
\end{definition}
As in $\qo_{\kappa,\tau}$, we identify two condition $(t,T,Q),(t',T,Q) \in \qo_{\kappa,\tau}^*$, whenever $b_t= b_{t'}$. 


The \textbf{direct extension} ordering of $\qo^*_{\kappa,\tau}$ naturally extends the direct extension ordering of $\qo_{\kappa,\tau}$. Namely, for two conditions $(t,T,Q)$, $(t^*,T^*,Q^*)$ of $\qo^*_{\kappa,\tau}$, we have $(t,T,Q) \leq^* (t^*,T^*,Q^*)$ if $(t,T)\leq^*_{\qo_{\kappa,\tau}} (t^*,T^*)$ and $Q^*(s) \geq Q(s)$ for every $s \in T^*$. \vskip\bigskipamount

We observe that the direct extension order $\leq^*$ is $\lambda$-closed in $V[G\ast H]$. For this, note that it is immediate from the definition above that 
the  partial order $\tilde\leq\in V[G]$, obtained from $\leq^*$ by removing the requirement $Q(\emptyset)\in H$, belongs to $V[G]$ and is clearly $\lambda$-closed in both $V[G]$ and $V[G\ast H]$ (note that the two generic extensions agree on sequences of length $<\lambda$).
Then, as $\leq^*$ is equivalent to the restriction of $\tilde{\leq}$ to a $\lambda$-closed set (which is essentially $H$), it remains $\lambda$-closed in $V[G\ast H]$.\vskip\bigskipamount

The \textbf{end-extension} ordering of $\qo^*_{\kappa,\tau}$ is based the restriction of the end-extension of $\qo_{\kappa,\tau}$ to the $Q$-generic restriction of $T$ with respect to $H$. Namely, for a condition $p = (t, T, Q)$, \textbf{the only values $\nu \in \suc_T(\emptyset)$ which are allowed to used when taking a one-point extension, are $\nu \in \suc_T(\emptyset)^H_Q$.}\footnote{I.e., $\nu \in A^H_Q$ for $A=\suc_T(\emptyset)$.} In this case, the resulting one-point extension is defined to be $p \fr \l \nu\r = (t \cup \{\nu\}, T_{\l \nu \r}, Q_{\l \nu \r})$. In general, for a sequence $r = \l \nu_0,\dots, \nu_{k-1} \r \in T$, the end extension of $p$ by $r$, denoted $p \fr r$, is the one obtained by taking a sequence of one-point extensions by $\nu_0,\dots,\nu_{k-1}$, in turn. 
\vskip\bigskipamount

 We note that although $\qo_{\kappa,\tau}^*$ depends on the collapse generic $H$, and is fully defined only in $V[G\ast H]$, we still have that $\qo_{\kappa,\tau}^* \subseteq V[G]$. Moreover, dropping the requirement $Q(\emptyset) \in H$ in the definition of conditions $p =(t,T,Q) \in \qo_{\kappa,\tau}^*$ allows us to examine conditions $(t,T,Q)$ and evaluate possible direct extensions and one-point extensions in $V[G]$.
For example, working in $V[G]$, we can consider possible one-point extensions $p \fr \l \nu \r$ of a condition $p= (t,T,Q)$ by an arbitrary $\nu \in \suc_T(\emptyset)$. Although eventually, in $V[G\ast H]$, $p$ will be a valid condition only if $Q(\emptyset) \in H$, and  $p \fr \l \nu \r$ will form valid extensions of $p$ only for a $D_{\kappa,\tau,t}$-positive set of ordinals $\nu \in \suc_T(\emptyset)$, it is still possible to decide certain properties of such extensions on a measure one set of $U_{\kappa,\tau}(t)$ in $V[G]$. 
This approach of arguing from $V[G]$ about the poset $\qo_{\kappa,\tau}^*$ in $V[G\ast H]$ plays a significant role in our proof below, showing that $\qo_{\kappa,\tau}^*$ satisfies the Prikry Property.

\begin{remark}
The forcing $\col(\kappa < \lambda) * \qo^*_{\kappa, \tau}$ is isomorphic to the collection of all $(t, T, Q)$ that satisfy all requirements of Definition \ref{def:Q*poset} expect the fourth one, $Q(\emptyset) \in H$. Nevertheless, the decomposition into the collapse part and the singularization part would be more appropriate for our construction, as eventually $\qo^*_{\kappa,\tau}$ is used as merely an auxiliary forcing.
\end{remark}

The coherency requirements in definition \ref{def:Q*poset} allow us to obtain a natural amalgamation property, similar to the one satisfied by the $V[G]$ poset $\qo_{\kappa,\tau}$.

\begin{lemma}\label{Lem.amalgamation.in.Q*}
 Work in $V[G]$. Let $p = (t,T,Q)$, forced to be a condition in $\qo^*_{\kappa,\tau}$ by $q = Q(\emptyset)$, and $A = \suc_T(\emptyset)$. 
 For each $\eta < \tau$,  
 denote \[A(\eta) = A \cap \{ \nu \in A \mid o^{\U}(\nu) = \eta\} \in U_{\kappa,\eta}(t) .\]

 Suppose that there are $\bar{\eta} < \tau$, a set $A'(\bar{\eta}) \subseteq A(\bar{\eta})$ with $A'(\bar{\eta}) \in U_{\kappa,\bar{\eta}}(t)$, and a sequence  of conditions 
 $\l (t\cup\{\nu\} , T^\nu,Q^\nu) \mid \nu \in A'(\bar{\eta}) \r$,  such that 
 $(t\cup\{\nu\} , T^\nu,Q^\nu)$ is a direct extension of $p \fr \l \nu \r$ for all $\nu \in A'(\bar{\eta})$.  
 Then there exists a direct extension $p^* \geq^* p$, $p^* = (t,T^*,Q^*)$,
 such that the set $\{ (t\fr \l \nu \r,T^\nu,Q^\nu) \mid \nu \in A'(\bar{\eta}) \}$ is forced by $Q^*(\emptyset)$ to be predense above $p^*$. 
\end{lemma}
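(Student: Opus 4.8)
\textbf{Proof plan for Lemma \ref{Lem.amalgamation.in.Q*}.}

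The plan is to mimic the standard amalgamation argument for $\qo_{\kappa,\tau}$ (the Gitik/Magidor tree forcing), carrying along the extra coordinate $Q$ and being careful that it stays $(T^*,t)$-suitable and satisfies the coherence requirement \eqref{condition:agreement} of Definition \ref{def:Q*poset}. First I would work entirely in $V[G]$, where $p=(t,T,Q)$, the measures $U_{\kappa,\eta}(t)$ for $\eta\le\tau$, and the whole sequence $\l (t\cup\{\nu\},T^\nu,Q^\nu)\mid \nu\in A'(\bar\eta)\r$ all live. Recall that each $U_{\kappa,\bar\eta}(t)$ is $\kappa$-complete and that (via $\pi^t_{\bar\eta}$) it is the Rudin--Keisler projection of $U_{\kappa,\tau}(t)$; this lets me intersect and diagonalize the $A^\nu:=\suc_{T^\nu}(\emptyset)$ over $\nu\in A'(\bar\eta)$. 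Concretely, I want to build a single tree $T^*$ with stem $t$ whose level-one successor set $A^*:=\suc_{T^*}(\emptyset)$ is a $U_{\kappa,\tau}(t)$-measure-one subset of $A$, arranged so that for $U_{\kappa,\bar\eta}(t)$-almost every $\nu$ of $o^{\U}$-value $\bar\eta$ the cone $(T^*)_{\l\nu\r}$ refines $T^\nu$ (after passing to a further measure-one subtree), and for $\nu$ of other $o^{\U}$-values the cone is just a direct-extension-legal shrinking of $T_{\l\nu\r}$. The usual way to make ``$(T^*)_{\l\nu\r}$ refines $T^\nu$'' coherent across $\nu$ is a diagonal-intersection/$\Delta$-system argument using the coherence of the measure sequence $\l U_{\kappa,\tau'}(t\fr s\uhr\tau')\mid \tau'<\tau\r$; this is exactly the machinery already invoked in the construction of $\qo_{\kappa,\tau}$ in \ref{ssec-non-stationary-iteration}, so I would cite \cite{bunger} (and \cite{Gitik-HB}) for it rather than redo it.

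Next I would define $Q^*$. On the stem, set $Q^*(\emptyset)=q=Q(\emptyset)$ (strengthened, if needed, to force the final predensity statement once it is verified). For $s\in T^*$ with $s\ne\emptyset$: if $s$ begins with some $\nu\in A'(\bar\eta)$, write $s=\l\nu\r\fr r$ with $r\in (T^*)_{\l\nu\r}\subseteq T^\nu$ and put $Q^*(s)=Q^\nu(r)$; otherwise put $Q^*(s)=Q(s)$. I need to check three things: (a) $Q^*$ is $(T^*,t)$-suitable, i.e.\ for $s'\supseteq s$ in $T^*$ we have $Q^*(s')\uhr(\lambda\times\pi^{t\fr s}_0(s'))=Q^*(s)$ --- this holds on each cone because $Q^\nu$ (resp.\ $Q$) is suitable on its own tree, and the two pieces never interact since the first coordinate of $s$ determines which rule applies and $\pi^{t\fr s}_0$ only looks below; (b) $Q^*(\emptyset)=q\in H$, which is given; (c) the coherence clause \eqref{condition:agreement}: if $b_{t\fr s}=b_{t\fr s'}$ then $Q^*(s)=Q^*(s')$ --- here I note that $b_{t\fr s}$ determines $o^{\U}(\nu)$ for $\nu=\min(s)$ and the generic clubs below it, hence determines whether the $A'(\bar\eta)$-rule or the $Q$-rule is used, and within a single $T^\nu$ the clause is inherited from the fact that $(t\cup\{\nu\},T^\nu,Q^\nu)$ is itself a condition. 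The only genuinely delicate point in (c) is two sequences with the same $b$ but different literal $\min$; this is handled by the standard identification of conditions $(t,T,Q)\sim(t',T,Q)$ when $b_t=b_{t'}$ already built into the definition, so I would phrase the construction modulo that identification from the start.

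Finally I would verify the conclusion: $p^*=(t,T^*,Q^*)\ge^* p$ is immediate from the construction ($T^*$ is a measure-one subtree of $T$ on each cone and $Q^*(s)\ge Q(s)$ pointwise since on the $A'(\bar\eta)$-cones $Q^\nu(r)\ge Q(\l\nu\r\fr r)$ because $(t\cup\{\nu\},T^\nu,Q^\nu)\ge^* p\fr\l\nu\r$). For predensity of $\{(t\fr\l\nu\r,T^\nu,Q^\nu)\mid\nu\in A'(\bar\eta)\}$ above $p^*$: take $H$-generic with $Q^*(\emptyset)\in H$ and let $p'\ge p^*$ in $\qo^*_{\kappa,\tau}$ be arbitrary; strengthening $p'$ if necessary, $p'$ either still has stem $t$ --- in which case I strengthen once more by a one-point extension using some $\nu\in\suc_{T^*}(\emptyset)^H_{Q^*}$, and by a density/measure-one argument $\suc_{T^*}(\emptyset)^H_{Q^*}\cap A'(\bar\eta)$ is nonempty (indeed $D_{\kappa,\bar\eta,t}$-positive) since $A'(\bar\eta)\in U_{\kappa,\bar\eta}(t)$ --- or else its stem already extends $t\fr\l\nu\r$ for some $\nu$; in the latter case $\nu$ must lie in the $Q^*$-generic restriction, hence in $A'(\bar\eta)$ (that is where $T^*$ was thinned to sit inside the $T^\nu$'s), and then $p'$ is below $(t\fr\l\nu\r,T^\nu,Q^\nu)$ by construction of $Q^*$ and $T^*$ on that cone. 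Either way $p'$ is compatible with a member of the family, giving predensity.

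\textbf{Main obstacle.} I expect the crux to be step (c)/the coherence of $Q^*$ together with making the single tree $T^*$ simultaneously refine all the $T^\nu$ ``coherently'' --- that is, ensuring that the thinned cones $(T^*)_{\l\nu\r}$ land inside $T^\nu$ for $U_{\kappa,\bar\eta}(t)$-almost all $\nu$ while the resulting $Q^*$ still satisfies the $b$-determined agreement clause. This is precisely the place where one must exploit the coherence of the underlying measure sequence (as in the construction of $\qo_{\kappa,\tau}$), rather than just $\kappa$-completeness, so the argument is not a routine intersection; everything else (suitability bookkeeping, the predensity verification) is then fairly mechanical.
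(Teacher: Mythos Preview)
Your definition of $Q^*$ has a genuine gap at the suitability check. You set $Q^*(\emptyset)=Q(\emptyset)$ and, for $\nu\in A'(\bar\eta)$, $Q^*(\l\nu\r)=Q^\nu(\emptyset)$. Suitability then requires $Q^\nu(\emptyset)\uhr(\lambda\times\pi^t_0(\nu))=Q(\emptyset)$, but all you know is $Q^\nu(\emptyset)\geq Q(\l\nu\r)$ and $Q(\l\nu\r)\uhr(\lambda\times\pi^t_0(\nu))=Q(\emptyset)$; the restriction $Q^\nu(\emptyset)\uhr(\lambda\times\pi^t_0(\nu))$ can be strictly stronger than $Q(\emptyset)$, and different $\nu$'s can give different such restrictions, so no single strengthening of $Q^*(\emptyset)$ fixes this. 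The paper resolves this by a pressing-down argument: for each $s\in T$ with $o(s)<\bar\eta$, the map $\nu\mapsto Q^\nu(\emptyset)\uhr(\lambda\times\pi^{t\fr s}(\nu))$ is regressive with respect to the normal projection $\pi^{t\fr s}$, so on a $U_{\kappa,\bar\eta}(t\fr s)$-large set $A'_s(\bar\eta)$ it takes a constant value $Q'(s)$. One then sets $Q^*(s)=Q'(s)$ (not $Q(s)$) for $o(s)<\bar\eta$, and this is what makes the gluing suitable.

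Relatedly, your rule ``if $\min(s)\in A'(\bar\eta)$ use $Q^{\min(s)}$, else use $Q$'' is not invariant under the $b$-identification, so your handling of clause \eqref{condition:agreement} does not go through: a sequence $\l\mu,\nu\r$ with $o(\mu)<\bar\eta$, $\mu\in b_\nu$, $\nu\in A'(\bar\eta)$ has the same $b$ as $\l\nu\r$ but falls under opposite branches of your rule. The paper instead bases the case split on $o(s)$ and, when $o(s)\geq\bar\eta$, on $\nu_s:=\pi^t_{\bar\eta}(s)$ (the first $b$-element of order $\bar\eta$), both of which are $b$-invariants; it then arranges (via the tree refinement in Remark \ref{RMK:Qconstruction}(3)) that every $s$ with $o(s)\geq\bar\eta$ is $b$-equivalent to some $s'$ beginning with $\nu_s$ and satisfying $T^*_s\subseteq T^*_{s'}\subseteq T^{\nu_s}$. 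This same device is what closes your predensity argument for stems whose first element has order $\neq\bar\eta$, a case your write-up does not cover.
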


\begin{remark}\label{RMK:Qconstruction}

In the proof of the lemma we make use of several construction arguments involving trees $T$ associated to conditions $(t, T)$ in the poset $\qo_{\kappa,\tau}$
from \cite{gitik-nonstionary-ideal}. We list these arguments and refer the reader to \cite{gitik-nonstionary-ideal} for proofs. \vskip\bigskipamount

For a finite sequence $s$, we write $o(s) = \max(\{o(\nu) \mid \nu \in s\} )$.\vskip\medskipamount
\begin{enumerate}
 \item Suppose that $(t,T)$ is a condition of $\qo_{\kappa,\tau}$ and $A'(\eta) \subseteq \succ_T(\emptyset)$ belongs to 
 $U_{\kappa,\eta}(t)$ for some $\eta < \tau$. Then there exists a sub-tree $T'$ of $T$, so that $(t,T') \in \qo_{\kappa,\tau}$ is a direct extension of
 $(t,T)$, and 
 \[ \{ \nu \in \succ_{T'}(\emptyset) \mid o(\nu) = \eta\} \subseteq A'(\eta) .\]
 
 Similarly, for every $s \in T$ and $A'_s(\eta)  \subseteq \succ_T(s)$ which 
 belongs to 
 $U_{\kappa,\eta}(t\fr s)$ there is a direct extension $(t,T')$ of $(t,T)$, 
 which only requires shrinking the tree $T$ above $s$ (i.e., shrinking  $T_s$) and in particular $s\in T'$, 
 so that $\{ \nu \in \succ_{T'}(s) \mid o(\nu) = \eta\} \subseteq A'_s(\eta)$.  \vskip\bigskipamount
 
 Furthermore, this construction can be naturally combined over different values $s \in T$. Namely, given a family $\{ A'_s(\eta) \mid s \in T \}$ of sets as above we can apply the same procedure, level by level, to the tree $T$, and obtain a sub-tree $T' \subseteq T$ with the property that 
 $(t,T') \in \qo_{\kappa,\tau}$ and for every $s \in T'$, 
 \[ \{ \nu \in \succ_{T'}(s) \mid o(\nu) = \eta\} \subseteq A'_s(\eta). \]
 \item For a condition $(t,T)$, $s \in T$, and $\eta < \tau$, there exists a direct extension $(t,T') \geq^* (t,T)$, which only requires shrinking the tree $T$ above $s$ such that 
 for all $s' \in T$ which end extends $s$, if there exists $\nu \in b_{s'}\setminus b_s$ such that $o(\nu) = \eta$,\footnote{this is equivalent to the existence of  $\mu \in s'\setminus s$ such that $o(\mu) \geq \eta$.} then $\nu' = \pi^{t \fr s}_{\eta}(s')$ (the minimal such $\nu$) belongs to $\succ_{T'}(s)$. 
 Repeating this construction, level by level, produces a direct extension $(t,T')$ of $(t,T)$ satisfying that for every $s \in T'$ and $s' \in T'$ which extends $s$,  $\nu' = \pi_\eta^{t \fr s}(s') \in \succ_{T'}(s) \cap \{ \nu < \kappa \mid o(\nu) = \eta\}$. \vskip\bigskipamount
  
 We note that if $s \in T'$ satisfies that $o(\mu) < \eta$ for all $\mu \in s$, then for every $\mu \in \succ_{T'}(s)$ with $o(\mu) \geq \eta$, we have  $\pi^{t\fr s}_\eta(\mu) = \pi^t_\eta(s \fr \l\mu\r)$, which by our assumption of $T'$ (applied to $s' = s \fr \l \mu\r$), implies that $\pi^t_\eta(s \fr \l\mu\r) \in \succ_{T'}(\emptyset)$.  It follows that \[\succ_{T'}(s) \cap \{\nu \mid o(\nu) = \eta\} \subseteq \succ_{T'}(\emptyset) \cap \{\nu \mid o(\nu) = \eta\}.\] 
 Since the former set belongs to $U_{\kappa,\eta}(t \fr s)$ we conclude that 
 \[\succ_{T'}(\emptyset) \cap \{\nu \mid o(\nu) = \eta\} \in U_{\kappa,\eta}(t \fr s)\]
  as well.
 
 The same consideration applies to any $s \in T'$ and $s'\in T'$ which extends $s$, and for which $o(\nu) < \eta$ for every $\nu \in s'\setminus s$, 
 and implies that \[\succ_{T'}(s) \cap \{\nu \mid o(\nu) = \eta\} \in U_{\kappa,\eta}(t \fr s').\] \vskip\bigskipamount
 
 \item Let $(t,T')$ be a condition as in the previous clause. 
 There exists a direct extension $(t,T^*)$ of $(t,T')$ such that for every $s' \in T^*$ for which $\nu' = \pi^{t}_{\eta}(s') \in b_{s'}\setminus b_t$ is defined, not only that \[\nu' \in \succ_{T^*}(\emptyset) \cap \{\nu \mid o(\nu) = \eta\},\] but further, there is  some $s'' \in T^*$ which extends $\l \nu'\r$ such that $b_{t \fr s''} = b_{t \fr  s'}$ and  $T^*_{s'} \subseteq T^*_{s''}$.  We note that it implies that the set \[\{ (t \cup \{ \nu \}, T^*_{\la \nu \ra}) \mid \nu \in \succ_{T^*}(\emptyset) , o(\nu) = \eta\}\] is predense above $(t,T^*)$.\vskip\bigskipamount
 
 \noindent
\end{enumerate}
 We turn to the proof of Lemma \ref{Lem.amalgamation.in.Q*}.
 
\end{remark}

\begin{proof}(Lemma \ref{Lem.amalgamation.in.Q*})\\
For $s \in T$ define $o(s) = \max(\{o(\nu) \mid \nu \in s\})$, 
and for a tree $T \subseteq [\kappa]^{<\omega}$, and $\eta$, $T(<\eta) = \{ s \in T \mid o(s) < \eta\}$.
Let $p = (t,T,Q)$, $A'(\bar{\eta})$, and $\l (t\cup\{\nu\} , T^\nu,Q^\nu) \mid \nu \in A'(\bar{\eta}) \r$, as in the statement of the Lemma.
By part (1) of Remark \ref{RMK:Qconstruction} above, we may assume (by reducing to a suitable sub-tree) that $\succ_T(\emptyset) \cap \{ \nu \mid o(\nu) = \bar{\eta}\} 
\subseteq A'(\bar{\eta})$. 
Furthermore, by part (2) of the remark, we may further assume that $A'(\bar{\eta}) \in U_{\kappa,\bar{\eta}}(t \fr s)$ for every $s \in T$ with $o(s) < \bar{\eta}$.\vskip\bigskipamount

\noindent
Recall that for each sequence $s \in T$, $o(s) < \bar{\eta}$, the function $\pi^{t \fr s}(\nu)$ is a normal projection 
of $U_{\kappa,\bar{\eta}}(t \fr s)$ to $U_{\kappa,0}(t \fr s)$. Since  $Q^\nu(\emptyset)\uhr \lambda \times \pi^{t \fr s}(\nu)$ is bounded in $\pi^{t \fr s}(\nu)$,\footnote{i.e., $Q^\nu(\emptyset)\uhr \lambda \times \pi^{t \fr s}(\nu)\in V_{\pi^{t \fr s}(\nu)}$ and $\pi^{t \fr s}(\nu)$ is an inaccessible cardinal} we can press down on its value, and find a subset $A'_{s}(\bar{\eta}) \in U_{\kappa,\bar{\eta}}(t \fr s)$ of $A'(\bar{\eta})$, and a collapse condition $Q'(s)$ such that $Q^\nu(\emptyset)\uhr \pi^{t \fr s}(\nu) = Q'(s)$ for all $\nu \in A'_{s}(\bar{\eta})$.\vskip\bigskipamount

\noindent
By applying the construction arguments of Remark \ref{RMK:Qconstruction}, we may find direct extension $(t,T')$ of $(t,T)$ having both properties from parts (1), (2) of the remark,
 where (1) is applied with respect to the sets $A'_s(\bar{\eta})$, $s \in T'$, $o(s) < \bar{\eta}$, given by the pressing down 
 process above, by which $Q'(s)$ is defined.
 We note that, as mentioned at the end of part (2) of the remark, for every $s' \in T'(<\bar{\eta}) = \{ s \in T' \mid o(s) < \bar{\eta}\}$ which end extends $s$,  
\[\succ_{T'}(s) \cap \{ \nu \mid o(\nu) = \bar{\eta} \} \in U_{\kappa,\bar{\eta}}(t \fr s').\]

Moreover, since 
\[\succ_{T'}(s) \cap \{ \nu \mid o(\nu) = \bar{\eta}\} \subseteq A'_{s}(\bar{\eta}),\] 
we conclude that $A'_{s}(\bar{\eta}) \in U_{\kappa,\bar{\eta}}(s')$. It follows that  
$A'_s(\bar{\eta}) \cap A'_{s'}(\bar{\eta}) \neq\emptyset$, which in turn, implies that 
\[Q'(s) = Q'(s')\uhr \lambda \times \pi^{t \fr s}(s')\] 
(as witnessed by $Q^\nu(\emptyset)$ for any $\nu \in A'_s(\bar{\eta}) \cap A'_{s'}(\bar{\eta})$).\vskip\bigskipamount

Finally, we form a sub-tree $T''$ of $T'$ by intersecting $T'_{\l \nu \r}$ with $T^\nu$, for each $\nu \in \succ_{T'}(\emptyset) \cap \{\nu \mid o(\nu) = \bar{\eta}\} \subseteq A'(\bar{\eta})$. By appealing to part (3) of the previous remark, we can find a direct extension $(t,T^*)$ of $(t,T'')$ which further satisfies that 
 for every $s \in T^*$ for which $\nu_{s} := \pi^{t}_{\eta}(s) \in b_{s}\setminus b_t$ is defined,  $\nu_{s} \in \succ_{T^*}(\emptyset) \cap \{\nu \mid o(\nu) = \eta\}$ and there exists some $s'  \in T^*$ which end extends $\l \nu_s\r$,  such that $b_{t \fr s'} = b_{t \fr s}$ and  $T^*_{s} \subseteq T^*_{s'}$.
 Let $\bar{s} = s'\setminus \nu_s+1$.
 
$Q^{\nu_s}(\bar{s})$ is defined, since $T^*_{\l \nu_{s}\r} \subseteq T^{\nu_{s}} = \dom(Q^{\nu_{s}})$. Moreover, 
 since $(t\cup\{\nu_s\},T^{\nu_s},Q^{\nu_s})$ is assumed to be a condition in $\qo^*_{\kappa,\tau}$, the value
 $Q^{\nu_s}(\bar{s})$ does not depend on the choice of a sequence $s'$, and its associated sub-sequence $\bar{s} \in T^{\nu_s}$ satisfying $b_{t \fr\l \nu_s\r \fr \bar{s}} = b_{t \fr s'} = b_{t \fr s}$. \vskip\bigskipamount

We turn to define the function $Q^*$ on $T^*$. We follow the convention from the last paragraph, where for $s \in T^*$ with $o(s) \geq \bar{\eta}$, we denote $\nu_s = \pi^t_{\bar{\eta}}(s)$. We set
 \[
  Q^*(s) = 
  \begin{cases}
   Q^{\nu_s}(\bar{s}) &\mbox{ if }  o(s) \geq \bar{\eta}, \bar{s} \in T^{\nu_s} \text{ satisfies } b_{t \fr \l \nu_s \r \fr \bar{s}} = b_{t \fr s}\\
   Q'(s) &\mbox{ if } o(s) < \bar{\eta}
  \end{cases}
 \]
 
We claim that $Q^*(\emptyset)$ forces that $(t,T^*,Q^*)$ is a condition that extends $(t,T,Q)$.
We show first that $Q^*(\emptyset)$ forces $(t,T^*,Q^*)$ is a condition of $\qo^*_{\kappa,\tau}$, which requires verifying the first three conditions 
in the definition of the poset. Conditions (i), (ii) are clearly satisfied as $(t,T^*) \in \qo_{\kappa,\tau}$. 
To verify condition (iii), we need to check that for every $s,s' \in T^*$, if $s'$ extends $s$ then $Q^*(s) = Q^*(s')\uhr \lambda \times \pi^{t \fr s}(s')$.
The verification breaks down to {three cases}. \vskip\bigskipamount

\noindent
\textbf{Case I:} If $o(s),o(s') < \bar{\eta}$, then $s,s' \in T^*(<\bar{\eta})$ and, as described above, the result is an immediate consequence of the fact $s' \in T'(<\bar{\eta})$ end extends $s$. \vskip\bigskipamount

\noindent
\textbf{Case II:}
If $o(s) < \bar{\eta}$ and $o(s') \geq \bar{\eta}$ then $\nu' = \pi^{t\fr s}_{\bar\eta}(s') \in A'_s(\bar{\eta})$. As 
 $\pi^{t \fr s}(s') = \pi^{t \fr s}(\nu')$ and $(t \cup \{\nu'\}, T^\nu,Q^\nu) \in \qo^*_{\kappa,\tau}$, 
it follows that 
\[
\begin{matrix}
Q^*(s')\uhr \lambda \times \pi^{t \fr s}(s') = & \\
Q^{\nu'}(\bar{s'}) \uhr \lambda \times \pi^{t \fr s}(s') = &  \\
Q^{\nu'}(\emptyset) \uhr \lambda \times \pi^{t \fr s}(\nu') = & Q'(s) = Q^*(s) 
\end{matrix}\]

\noindent
\textbf{Case III:} If $o(s) \geq \bar{\eta}$ then there exists some $\bar{s} \in T^*_{\nu_{s}} \subseteq T^{\nu_s}$
such that $b_{t \fr \l \nu_s \r \fr \bar{s}} = b_{t \fr s}$ and $T^*_s \subseteq T^*_{\l \nu_s \r \fr \bar{s}}$. In particular, $Q^*(s) = Q^*(\bar{s})$ and $s' \in T^*_{\l \nu_s \r \fr \bar{s}} \subseteq T^{\nu_s}_{\bar{s}}$ . Since  $Q^{\nu_s}$ is $(t \cup \{\nu_s\}, T^{\nu_s})$-coherent, we conclude that 

\[ 
\begin{matrix}
Q^*(s')\uhr \lambda \times \pi^{t \fr s}(s') = & \\
Q^{\nu_s}(s') \uhr \lambda \times \pi^{t \fr s}(s') = & 
Q^{\nu_{s}}(\bar{s}) = Q^*(s)
\end{matrix}
\]

This concludes the proof that $p^* = (t,T^*,Q^*)$ satisfies the  property (iii) of the definition of $\qo^*_{\kappa,\tau}$, and thus, that $Q^*(\emptyset) \in \col(\lambda,<\kappa)$ forces it is a condition of $\qo^*_{\kappa,\tau}$. 
It is immediate from its definition that $p^*$  is a direct extension of $p$.
Finally, our choice of the tree $T^*$  above, obtain from $T'$ using fact (3) from Remark \ref{RMK:Qconstruction} above, implies at once that 
 $Q^*(\emptyset)$ forces 
 $\{ (t\fr \l \nu \r,T^\nu,Q^\nu) \mid \nu \in A'(\bar{\eta}) \}$ to be predense above $p^*$. 
\end{proof}

\begin{lemma}
$\qo^*_{\kappa,\tau}$ satisfies the Prikry Property.
\end{lemma}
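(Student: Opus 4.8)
The plan is to prove the Prikry Property for $\qo^*_{\kappa,\tau}$ by induction on $\tau \leq \lambda$, following the standard strategy for tree Prikry-type forcings as in Gitik's handbook chapter \cite{Gitik-HB}, but carrying out the key amalgamation step via Lemma \ref{Lem.amalgamation.in.Q*}, and doing the combinatorial work in $V[G]$ rather than in $V[G\ast H]$. So, fix a condition $p = (t,T,Q)$ and a statement $\sigma$ in the forcing language of $\qo^*_{\kappa,\tau}$. We want a direct extension $p^* \geq^* p$ deciding $\sigma$. As emphasized in the paragraph preceding Lemma \ref{Lem.amalgamation.in.Q*}, although $\qo^*_{\kappa,\tau}$ is only fully defined in $V[G\ast H]$, its conditions lie in $V[G]$ and one can reason in $V[G]$ about one-point extensions $p \fr \la\nu\ra$ for arbitrary $\nu \in \suc_T(\emptyset)$ (ignoring the constraint $Q(\emptyset)\in H$), deferring the genericity requirement to the end.

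First I would handle the base case $\tau = 0$, where $\qo^*_{\kappa,0}$ adds nothing past the collapse and the Prikry Property is trivial (or, depending on conventions, $\qo^*_{\kappa,1}$, which is essentially a Prikry forcing with a collapse attached and where the classical argument applies directly). For the inductive step, the heart of the matter is the following: working in $V[G]$, for each $\eta < \tau$ and each $\nu \in \suc_T(\emptyset)$ with $o^{\U}(\nu) = \eta$, the one-point extension $p \fr \la \nu \ra = (t\cup\{\nu\}, T_{\la\nu\ra}, Q_{\la\nu\ra})$ is a condition in a poset of the form $\qo^*_{\kappa,\eta}$-over-$(t\cup\{\nu\})$ (more precisely, the natural analogue with stem $t\cup\{\nu\}$ and the measures $U_{\kappa,\eta}(t\fr\la\nu\ra)$, which are $\eta$-coherent below $\tau$). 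By the induction hypothesis, this smaller poset has the Prikry Property, so there is a direct extension $(t\cup\{\nu\}, T^\nu, Q^\nu) \geq^* p\fr\la\nu\ra$ deciding $\sigma$. Since one can shrink $\suc_T(\emptyset)$ so that each decision value (``forces $\sigma$'' vs.\ ``forces $\neg\sigma$'') is constant on a set in $U_{\kappa,\eta}(t)$, we get, for some fixed truth value $\epsilon_\eta$, a $U_{\kappa,\eta}(t)$-large set $A'(\eta)$ and a coherent family $\la (t\cup\{\nu\}, T^\nu, Q^\nu) \mid \nu \in A'(\eta)\ra$ of $\eta$-uniform decisions of $\sigma$. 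Now Lemma \ref{Lem.amalgamation.in.Q*} applies with $\bar\eta = \eta$: it produces a direct extension $p^\eta = (t, T^\eta, Q^\eta) \geq^* p$ below which the family $\{(t\fr\la\nu\ra, T^\nu, Q^\nu) \mid \nu\in A'(\eta)\}$ is predense. Consequently $Q^\eta(\emptyset)$ forces that $p^\eta \fr r$ decides $\sigma$ with value $\epsilon_\eta$ for every legal one-point extension $r$ with $o^{\U}(r) = \eta$.

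Next I would iterate this over the (at most $\lambda$-many, hence ${<}\kappa$-many) values $\eta < \tau$, using the $\lambda$-closure of the direct extension order $\leq^*$ established in the text (the poset $\tilde\leq$ in $V[G]$ is $\lambda$-closed, and $\leq^*$ is its restriction to the $\lambda$-closed ``$H$''-part). We obtain a single direct extension $p^{**} = (t, T^{**}, Q^{**}) \geq^* p$ that simultaneously witnesses: for every $\eta < \tau$ and every legal $r\in T^{**}$ with $o^{\U}(r) = \eta$, $p^{**}\fr r$ decides $\sigma$ with value $\epsilon_\eta$. Then I would apply the same argument one more level up to the one-point extensions themselves to propagate decisions along the whole tree (i.e., a ``level-by-level'' fusion, using part (1) of Remark \ref{RMK:Qconstruction} to keep shrinking successor sets while staying in the appropriate measures), arriving at a condition $p^*$ such that every end extension $p^* \fr r$, for $r$ in the $Q^*$-generic restriction of the tree, decides $\sigma$, and the decision depends only on the top order $o^{\U}(r)$ among the finitely many ordinals appearing in $r$. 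A standard density/genericity argument in $V[G\ast H]$ then finishes: since $\{p^*\fr r \mid r \text{ legal}\}$ is predense below $p^*$ and each such condition decides $\sigma$, and since the decision is invariant enough, either $p^*$ itself already decides $\sigma$ or a trivial further direct extension does — this is where one uses that the set of ``legal'' one-point extensions is $D_{\kappa,\tau,t}$-positive and that the coherence condition (\ref{condition:agreement}) forces the $Q$-values, hence the decisions, to match across sequences with the same $b$-trace.

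\textbf{The main obstacle} I anticipate is organizing the induction so that the one-point extensions $p\fr\la\nu\ra$ genuinely fall under the induction hypothesis: the poset ``$\qo^*$ with stem $t\cup\{\nu\}$ and measures $U_{\kappa,\eta}(t\fr\la\nu\ra)$'' is not literally $\qo^*_{\kappa,\eta}$ — it is its analogue relativized to the extended coherent sequence — so one must either set up the induction statement uniformly over all $\tau$-coherent stems $t$ from the start, or verify explicitly that the relativized poset is isomorphic to one covered by the hypothesis. A secondary subtlety is bookkeeping the collapse conditions $Q$ through all the shrinkings: every time we pass to a subtree or amalgamate, we must ensure the $(T,t)$-suitability and the agreement condition (\ref{condition:agreement}) are preserved, but this is exactly what Lemma \ref{Lem.amalgamation.in.Q*} and the constructions in Remark \ref{RMK:Qconstruction} are designed to guarantee, so I would lean on them rather than redo the combinatorics. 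I expect the ``$\lambda$-many $\eta$'s'' fusion and the final predensity argument to be routine given the $\lambda$-closure of $\leq^*$ and $\kappa$-completeness of the measures.
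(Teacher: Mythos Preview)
Your inductive approach on $\tau$ has a genuine gap at its core step. You claim that the one-point extension $p\fr\la\nu\ra$ by a point $\nu$ with $o^{\U}(\nu) = \eta < \tau$ is a condition in (an analogue of) $\qo^*_{\kappa,\eta}$, so that the induction hypothesis applies. This is false for the forcing at hand: in $\qo^*_{\kappa,\tau}$ the tree $T_{\la\nu\ra}$ above $\nu$ still has successor sets belonging to $\bigcap_{\tau'<\tau} U_{\kappa,\tau'}(\cdot)$; points of \emph{every} order $<\tau$ continue to appear above $\nu$, not only orders $<\eta$. Thus $p\fr\la\nu\ra$ is simply a condition of $\qo^*_{\kappa,\tau}$ with a longer stem --- no reduction in the complexity parameter $\tau$ occurs, and there is no poset ``of the form $\qo^*_{\kappa,\eta}$-over-$(t\cup\{\nu\})$'' to invoke. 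You flag this as your ``main obstacle,'' but the proposed workaround (relativizing the induction over stems, or finding an isomorphism) cannot help, because in this Gitik-style tree forcing the stem does not constrain the orders of future points.

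The paper's proof proceeds without any induction on $\tau$. It argues by contradiction: assuming no direct extension of $p$ decides $\sigma$, one works in $V[G]$ and, for each $\nu\in\suc_T(\emptyset)$, chooses a collapse extension $q(\nu)$ deciding whether $p\fr\la\nu\ra$ \emph{admits} a deciding direct extension. If for some $\bar\eta<\tau$ the set of $\nu$'s for which it does (with a fixed truth value) lies in $U_{\kappa,\bar\eta}(\emptyset)$, Lemma \ref{Lem.amalgamation.in.Q*} gives an immediate contradiction. Otherwise the ``no deciding direct extension'' set lies in $\bigcap_{\eta<\tau}U_{\kappa,\eta}(\emptyset)$, and one repeats the same analysis at level $2$ of the tree, then level $3$, etc., using $\lambda$-closure of $\leq^*$ to pass through the $\omega$ many stages. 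The limit condition then has the absurd property that no end extension by any $s$ in its tree has a deciding direct extension. So the recursion is on the \emph{finite depth of the tree}, taking an $\omega$-limit at the end --- not on $\tau$.
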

\begin{proof}
 Suppose otherwise. Working in $V[G \ast H]$, let $(t,T,Q)$, $\sigma$ be condition and statement of $\qo^*_{\kappa,\tau}$, respectively, such that no direct extension of $(t,T,Q)$ decides $\sigma$. 
 Back in $V[G]$, let $q \in H$ be a condition which forces this statement about $(t,T,Q)$ and $\sigma$.
 Since $q$ forces $(t,T,Q)$ to be a condition of $\qo^*_{\kappa,\tau}$ we have that $q \geq Q(\emptyset)$. Therefore, by moving to a
 direct extension of $(t,T,Q)$, we may assume that $q = Q(\emptyset)$.
 For notational simplicity, we make the assumption that $t = \emptyset$. The proof for an arbitrary sequence $t$ is similar.
 
 Let $A = \suc_T(\emptyset)$. We may assume that $q\in V_{\mu_0}$, where $\mu_0 = \min(\{ \pi_0^\emptyset(\nu) \mid \nu \in A\})$. 
 For each $\nu \in A$, we choose a condition $q(\nu) \in \col(\lambda,<\kappa)$, extending $q \cup Q(\nu)$, which decides the $\col(\lambda,<\kappa)$ statement of whether there exists a direct extension 
 $p^{\nu} = (\l \nu \r,T^\nu,Q^\nu)$ 
 of $p \fr \l \nu \r$ which decides $\sigma$, and if so, whether $p^\nu$ forces $\sigma$ or $\neg\sigma$.
 Let $A^0$ be the sets of $\nu \in A$ for which $q(\nu)$ forces $p^\nu$ exists, and  ``$p^\nu \Vdash \sigma$''.
 Similarly, let $A^1 \subseteq A$ consists of $\nu$ such that $q(\nu)$ forces $p^\nu$ exists, and  ``$p^\nu \Vdash \neg\sigma$'',
 and $A^2 = A \setminus (A^0 \uplus A^1)$. The proof splits now into three main cases: \vskip\bigskipamount
 
 \noindent
 \textbf{Case 0:} There exists some $\bar{\eta} < \tau$ such that $A^0 \in U_{\kappa,\bar{\eta}}(\emptyset)$.\vskip\bigskipamount
 Let $A'(\bar{\eta}) = A^0 \cap \{ \nu \mid o(\nu) = \bar{\eta}\}$. By applying Lemma \ref{Lem.amalgamation.in.Q*} with respect to the family of conditions 
 $\{ p^{\nu} =( \l \nu \r,T^\nu,Q^\nu) \mid \nu \in A'(\bar{\eta}) \}$, we can find a 
 $p^* = (\emptyset,T^*,Q^*)$ which is forced by $Q^*(\emptyset)$ to be a direct extension of $(\emptyset,T,Q)$, and to have
  $\{ p^{\nu} \mid \nu \in A'(\bar{\eta}) \}$ be a predense in $\qo^*_{\kappa,\tau}/(\emptyset,T^*,Q^*)$.
 It follows that  $Q^*(\emptyset) \geq Q(\emptyset) = q$ forces $p^* = (\emptyset,T^*,Q^*)$ is a direct extension of $p$ which decides $\sigma$. Contradicting our assumption. \vskip\bigskipamount

 \noindent
 \textbf{Case 1:} There exists some $\bar{\eta} < \tau$ such that $A^1 \in U_{\kappa,\bar{\eta}}(\emptyset)$.\vskip\bigskipamount
 The argument for this case is similar to the previous one, and leads to an extension $q^* \geq q$ in $\col(\lambda,<\kappa)$, and a direct extension $\bar{p} \geq^* p$, such that $q^*$ forces
 $\bar{p} \Vdash_{\qo^*_{\kappa,\tau}}\neg\sigma$. Contradiction.\vskip\bigskipamount
 
 \noindent
\textbf{Case 2:} $A^2 \in \bigcap_{\eta < \tau} U_{\kappa,\eta}(\emptyset)$. \vskip\bigskipamount
\noindent
Let $A^2(0) = A^2 \cap \{ \nu \mid o(\nu) = 0\}$, and apply Lemma \ref{Lem.amalgamation.in.Q*} with respect to 
$A^2(0)$ and $\{ p \fr \l \nu \r \mid \nu \in A^2(0)\}$,  
to obtain a direct extension
$p_0^* = (\emptyset, T_0^*,Q_0^*)$ of $p$, with $\{ p\fr \l{\nu}\r \mid \nu \in A^2(0) \}$ being predense in $\qo^*_{\kappa,\tau}/p_0^*$.

Denoting $q_0^* = Q_0^*(\emptyset)$, we define $\bar{A}^2$ to be the set of all  $\nu \in A^2$ for which $q^*_0$ forces there is no direct extension $p^\nu \geq^* p' \fr \nu$ which decides $\sigma$. 
Note that $\bar{A}^2$ must belong to $\bigcap_{\eta < \tau}U_{\kappa,\eta}(\emptyset)$, since otherwise, there would be some $\eta < \tau$,  and a set $A'(\eta) \subseteq \bar{A}^2$ consisting of $\nu$ for which some $q^*(\nu) \geq q^*_0$ forces there exists a direct extension of $p^*_0 \fr \nu$ which decides $\sigma$. This, in turn would allow us to repeat the construction of one of the previous cases 0 and 1, to show that there is  $q^* \geq^* q^*_0$ which forces some direct extension $p^*$ of $p^*_0$ to force either $\sigma$ or $\neg\sigma$, contradicting the choice of $q^*_0$.

Let $q^1:=q^*_0$ and $p^1 = ( \emptyset, T^1, Q^1)$ be the direct extension of $p^*_0$ obtained by shrinking $\suc_{T^{p_0^*}}(\emptyset)$ to points in $\bar{A}^2$. It follows from the construction $q^1$ forces that for all  $\nu \in \suc_{T^1}(\emptyset)$, $p^1 \fr \l \nu \r$ does not have a direct extension in $\qo^*_{\kappa,\tau}$ which decides $\sigma$.

Next, we move up to the second level of the tree. To each $\mu \in \suc_{T^1}(\emptyset)$, we can repeat the above analysis with respect to $q(\mu) = q^1 \cup Q^1(\mu)$ and $p^1 \fr \l\mu\r = ( \l \mu \r, T^1_{\l \mu \r}, Q^1_{\l \mu \r})$. Accordingly, we  split $B = \suc_{T^1}(\emptyset)$ into three sets, $B^0, B^1, B^2$, based on whether the analysis for
$q(\mu)$ and $p^1 \fr \l\mu\r$ has produced an extension $q^*(\mu) \geq q(\mu)$ which forces some direct extension $p^{1,\mu} \geq^* p^1 \fr \l\mu\r$ to decide $\sigma$ ($B^0$ and $B^1$ for forcing $\sigma$ and $\neg\sigma$, respectively), or not. 
The argument above shows that if $B^0$ or $B^1$ belong to $U_{\kappa,\eta}(\emptyset)$ for some $\eta < \tau$ then there exists some $q^* \geq q^1$ which forces that $p^1$ has a direct extension which decides $\sigma$, contradicting our assumptions. 

It follows that $B^2 \in \bigcap_{\eta < \tau}U_{\kappa,\eta}(\emptyset)$, and by repeating the argument from the beginning of Case 2 for each $p^1\fr \l\mu\r$, $\mu \in B^2$, we can find for each $\mu \in B^2$,  conditions $q^*(\mu) \geq q^1(\mu)$, $q^*(\mu) \in \col(\lambda,\pi^\emptyset_0(\mu))$, and  $p^*_\mu = ( \l \mu \r, T^*_\mu, Q^1_\mu) \geq^* p^1 \fr \mu$, such that $q^*(\mu)$ forces there is no direct extension of $p^*_\mu \fr \nu$ which decides $\sigma$, for any $\nu \in \suc_{T^*_\mu}(\emptyset)$.  
We may assume $q^*(\mu) = Q^1_\mu(\emptyset)$ and apply Lemma \ref{Lem.amalgamation.in.Q*} with respect to 
$B^2(0)$ and $\{ p^*_{\mu}  \mid \mu \in B^2(0)\}$, to conclude, similarly to the above, that there are extensions $q^2 \geq q^1$ and  $p^2 = (\emptyset,T^2,Q^2) \geq^* p^1$, such that $q^2$ forces that for any pair $\l \nu_0,\nu_1\r \in T^2$, $p^2_{\l \nu_0,\nu_1 \r}$ does not have a direct extension which decides $\sigma$.

The construction is now repeated level by level,  for all $n < \omega$. This produces sequences of extensions $q = q^0 \leq q^1 \leq \cdots \leq q^n \cdots$ in $\col(\lambda,<\kappa)$ and $p = p^0 \leq^* p^1 \leq^* \cdots \leq^* p^n \cdots$ in $\qo^*_{\kappa,\tau}$, such that for each $n < \omega$, writing $p^n = (\emptyset, T^n,Q^n)$, we have that 
$q^n$ forces that for all sequences $s \in T^n$ of length $|s| \leq n$, $p^n \fr s$ does not have a direct extension which decides $\sigma$. 
Finally, let $q^\omega \in \col(\lambda,<\kappa)$ be an union of all $q^n$, $n < \omega$, and $p^* \in \qo^*_{\kappa,\tau}$ be a direct extension of $p^n$ for all $n < \omega$. Writing $p^* = (\emptyset, T^*,Q^*)$, it follows from the construction that $p^\omega$ forces that for no $s \in T^*$ such that $p^* \fr s$ has a direct extension which decides $\sigma$. This is of course absurd.
\end{proof}

We conclude that $( \qo^*_{\kappa,\tau},\leq,\leq^*)$ is a Prikry type forcing whose direct extension order $\leq^*$ is $\lambda$-closed. In particular, it does not add bounded subsets to $\lambda$. 
Moreover, like $\qo^\tau_\kappa$,  $(\qo^*_{\kappa,\tau},\leq)$ introduces a generic club $b^\tau_\kappa \subseteq \kappa$ of order-type $\otp(b^\tau_\kappa) = \omega^\tau$. 
Finally,  since   $U_{\kappa,\tau'} \subseteq F_{\kappa,\tau'}(t)$ is clearly contained in $D_{\kappa,\tau',t}$ for every coherent sequence $t$, 
it follows from a standard density argument that if $b_{\kappa}^\tau$ is a $\qo^*_{\kappa,\tau}$-generic sequence over $V[G\ast H]$, then for every $V$-set $A \in \F_\kappa$ there exists some $\beta < \kappa$ such that $b^\tau_\kappa \setminus \beta \subseteq A$.

\subsection{The forcing $\bar{\co}_{\F_{\kappa}}$}\label{subsection:c-star}
Our goal now is to introduce a poset $\bar{\co}_{\F_\kappa}$ which is equivalent to $\co_{\F_\kappa}$, and further has a dense sub-forcing $\co^*_{\F_\kappa}$, which is of Prikry-type, and its direct extension order is $\lambda$-closed. 
We first introduce the poset $\co^*_{\F_\kappa}$, obtained from $\qo^*_{\kappa,\lambda}$ and $\co_{\F_\kappa}$. 

\noindent
Recall that $G \ast H$ is $V$ generic for $\po \ast \col(\lambda,<\kappa)$. 
Working in $V[G \ast H]$ we consider the two-step iterations
$\qo^*_{\kappa,\lambda} \ast \co_{\F_\kappa}$ consisting of conditions $(q, \name{x})$ so that $q \force_{\qo^*_{\kappa,\lambda}} \name{x} \in \check{\co_{\F_\kappa}}$.
Note that when forcing with $\co_{\F_\kappa}$ over a $V[G \ast H]$-generic extension by $\qo^*_{\kappa,\lambda}$, we require that the bounded closed sets $c \subseteq\kappa$ in the conditions
$x = \l c,A\r \in \co_{\F_\kappa}$ are actually ground model sets, from $V[G \ast H]$. 
In particular, for every condition $(q,\name{x})$ there exists an extension $q' \geq q$ and a pair $x \in \co_{\F_\kappa}$ so that $q' \Vdash \name{x} = \check{x}$. \vskip\bigskipamount

Let $b^\lambda_\kappa \subseteq \kappa$ be a $\qo^*_{\kappa,\lambda}$ generic club in $\kappa$. We know that $\otp(b^\lambda_\kappa) = \lambda$ and that $b^\lambda_\kappa$ is almost contained in every set $A \in \F_{\kappa}$. 
Working in a $\qo_{\kappa,\lambda}^*$ generic extension $V[G \ast H \ast b^\lambda_\kappa]$ of $V[G \ast H]$ we see that for every condition $x = \l c, A\r$ in $\co_{\F_\kappa}$ there exists some $\beta \in A \setminus (\max c + 1)$ such that $x' = \l c', A\r$, with $c' = c \cup \{\beta\}$, extends $x$ and satisfies that $b^\lambda_\kappa \setminus (\max c'+1) \subseteq A$.

\begin{definition}[$\co^\lambda_{\F_\kappa}$] ${}$\vskip\bigskipamount
Working in a $\qo_{\kappa,\lambda}^*$ generic extension $V[G \ast H \ast b^\lambda_\kappa]$ of $V[G \ast H]$,  
let $\co^\lambda_{\F_\kappa}$ denote the subset of $\co_{\F_\kappa}$, consisting of conditions $x' = \l c', A'\r$ so that 
$\max(c') \in b^\lambda_\kappa$, and $b^\lambda_\kappa \setminus (\max(c')+1) \subseteq A'$. 
\end{definition}

It follows from the above that $\co^\lambda_{\F_\kappa}$ is a dense subset of $\co_{\F_\kappa}$. 
Since $b^\lambda_\kappa \subseteq\kappa$ is closed of order-type $\lambda = \cf(\kappa)^{V[G \ast H \ast b^\lambda_\kappa]}$, and no sequences of ordinals of length $<\lambda$ are introduced by $b^\lambda_\kappa$, it follows that the restriction of the $\co_{\F_\kappa}$ order to $\co^\lambda_{\F_\kappa}$ is $\lambda$-closed. \vskip\bigskipamount

With this observation, we move back to $V[G \ast H]$ to define the poset $\co^*_{\F_\kappa}$. 
\begin{definition}[$\co^*_{\F_\kappa}$]${}$\vskip\smallskipamount
 Let $\co^*_{\F_\kappa}$ be the two step iteration $\co^*_{\F_\kappa} = \qo^*_{\kappa,\lambda} \ast \co^\lambda_{\F_\kappa}$.
We define the direct extension ordering $\leq^*$ of $\co^*_{\F_\kappa}$ to be the extension of the usual direct extension order of $\qo^\lambda_\kappa$ with the standard order on the
second $\co^\lambda_{\F_\kappa}$ component.
\end{definition}
\begin{corollary}\label{cor:bar-C-prikry-type}
$\co^*_{\F_\kappa}$ is a dense sub-forcing of $\qo^*_{\kappa,\lambda} \ast \co_{\F_\kappa}$ which satisfies the Prikry Property, and its direct extension order is $\lambda$-closed. 
\end{corollary}

Note that for every dense subset $D$ of $\co_{\F_\kappa}$ and a condition $( q, \name{x}) \in \co^*_{\F_\kappa}$ there exists a direct extension $(q^*,\name{x}^*) \geq^* (q,\name{x})$ such that $q^* \Vdash \name{x}^* \in \check{D}$. \vskip\bigskipamount
Similarly, it is clear that the set of conditions $(q',\check{x'}) \in \co^*_{\F_\kappa} $, for which the second component is a canonical name $\check{x'}$ of a condition $x' \in \co_{\F_\kappa}$, is dense in $\co^*_{\F_\kappa}$. 
The map  $(q',\check{x'})  \mapsto x'$ defined on this dense set naturally induces a forcing projection $\pi$ from $\co^*_{\F_\kappa}$ to the boolean completion of $\co_{\F_\kappa}$. This projection sends a condition of the form $\langle q, \name{x}\rangle$ to the join of the collection of all $y \in \co_{\F_\kappa}$ such that there is some extension of $q$, $q'$ that forces $\name{x} =\check{y}$.

\vskip\bigskipamount

Next, we follow Gitik's machinery from \cite{Gitik-HB}, to form  a Prikry-type forcing notion $\bar{\co}_{\F_\kappa}$ which is equivalent to $\co_{\F_\kappa}$, from $\co^*_{\F_\kappa}$.

\begin{definition}[$\bar{\co}_{\F_\kappa}$]${}$\vskip\smallskipamount
We define a Prikry-type forcing notion  $(\bar{\co}_{\F_\kappa},\leq',\leq^*)$ as follows. 
\begin{itemize}
 \item 
$\bar{\co}_{\F_\kappa}= \co^*_{\F_\kappa}$,  
\item the partial ordering $\leq'$ is defined by 
$p' \geq' p$ if $\pi(p') \geq \pi(p)$, and 
\item $\leq^*$ is taken to be the same direct extension order of $\co^*_{\F_\kappa}$
\end{itemize}
\end{definition}

It is immediate from the definition that $(\bar{\co}_{\F_\kappa}, \leq')$ is equivalent as a forcing notion to $(\co_{\F_\kappa},\leq)$ and that the direct extension order $\leq^*$ of $\bar{\co}_{\F_\kappa}$ is $\lambda$-closed. \vskip\bigskipamount
To show that $(\bar{\co}_{\F_\kappa},\leq',\leq^*)$ satisfies the Prikry Property, it suffices to verify that 
for every statement $\sigma$ in the forcing language of $\co_{\F_\kappa}$ and every condition $p \in \bar{\co}_{\F_\kappa}$, there is a direct extension $p^* \geq^* p$ such that $\pi(p^*)$ decides $\sigma$. 
Indeed, defining $D_0 = \{ p' \in \co^*_{\F_\kappa} \mid \pi(p') \Vdash \sigma\}$ and $D_1 = \{ p' \in \co^*_{\F_\kappa} \mid \pi(p') \Vdash \neg\sigma\}$, it is clear that $D_0 \cup D_1$ is dense in $\co^*_{\F_\kappa}$ and that a generic filter $G^*$ of $\co^*_{\F_\kappa}$ will have a nontrivial intersection with exactly one of the two sets. Let $\sigma^*\colon \name{G}^* \cap D_0 \neq\emptyset$. Then $\sigma^*$ is a statement for the forcing language of $\co^*_{\F_\kappa}$. Moreover,  it is clear from our construction that for a condition $p^* \in \co^*_{\F_\kappa}$ which decides $\sigma^*$ we have that  $p^* \Vdash \sigma^*$ implies that $\pi(p^*) \Vdash \sigma$, and 
$p^* \Vdash \neg\sigma^*$ implies that $\pi(p^*)\Vdash \sigma$. Since $\co^*_{\F_\kappa}$ satisfies the Prikry Property every condition $p$ has a direct extension $p^* \geq^* p$ which decides $\sigma^*$. \vskip\bigskipamount

 We note that similarly to $\po \ast \col(\lambda,<\kappa) \ast  \co_{\F_\kappa}$, the forcing $\po \ast \col(\lambda,<\kappa)  \ast \co^*_{\F_\kappa}$ is cone homogeneous. In most applications of homogeneity, moving to an equivalent forcing does not change the main properties of its iterations.
 In order 
 to apply the results from section \ref{Sec:homog} and argue that the iteration 
 $\po \ast \col(\lambda,<\kappa) \ast \bar{\co}_{\F_\kappa}$
 is cone homogeneous, we need to verify that this posets meets the assumptions of Lemma \ref{FACT:homogiter}. 
 
 
\begin{lemma}\label{Lem:Q[U].weakly.homogeneous}
Denote $\po \ast \col(\lambda,<\kappa)  \ast \co^*_{\F_\kappa}$ by $\wo$, and its regular and direct extension orders by $\leq_{\wo}$ and $\leq^*_{\wo}$ respectively.\\
For every $w_0, w_1 \in \wo$ there are direct extensions $w_0^*,w_1^*$ of $w_0,w_1$ respectively, 
and a cone isomorphism $\varphi \colon \wo/w_0^* \to \wo/w_1^*$ which respects the direct extension order $\leq^*_{\wo}$.

In particular, the forcing $\po \ast \col(\lambda,<\kappa)\ast \bar{\co}_{\F_\kappa}$ is cone homogeneous.
\end{lemma}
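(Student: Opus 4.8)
The plan is to verify the hypotheses of Lemma~\ref{FACT:homogiter} for the iteration $\wo = \po \ast \col(\lambda,<\kappa) \ast \co^*_{\F_\kappa}$, viewed as a three-step Magidor-style iteration of Prikry-type posets: here $\po$ itself is (the truncation of) the Magidor iteration from subsection~\ref{ssec-non-stationary-iteration}, $\col(\lambda,<\kappa)$ is trivially Prikry-type (with $\leq^* = \leq$), and $\co^*_{\F_\kappa} = \qo^*_{\kappa,\lambda} \ast \co^\lambda_{\F_\kappa}$ is Prikry-type with $\lambda$-closed direct extension order, as established at the end of the previous subsection. Condition~(i) of Lemma~\ref{FACT:homogiter} --- ordinal definability of each factor and its two orders --- holds because each factor is defined from the coherent sequence $\U$ (and the earlier generics, which are themselves generic for OD posets), so one may take the relevant parameters to be ordinal. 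The substance is condition~(ii): given a stage and two conditions $p,q$ in the factor at that stage, forced by the trivial condition below, one must produce direct extensions $p^* \geq^* p$, $q^* \geq^* q$ and a $\leq^*$-respecting cone isomorphism $\psi \colon \qo/p^* \to \qo/q^*$.

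For the $\po$ factor this is exactly the content of the Magidor iteration being built from OD data --- one argues as in Lemma~\ref{FACT:homogiter}'s own proof, or cites \cite{bunger} / \cite{gitik-nonstionary-ideal}. For the $\col(\lambda,<\kappa)$ factor, the Levy collapse is weakly homogeneous in the strong sense that any two conditions have a common refinement after which the cones are isomorphic via an automorphism fixing the order; since $\leq^* = \leq$ here the $\leq^*$-respecting clause is automatic. The real work is the $\co^*_{\F_\kappa}$ factor. So first I would reduce to proving: working over $V[G\ast H]$, for any two conditions $(q_0,\dot x_0),(q_1,\dot x_1) \in \co^*_{\F_\kappa} = \qo^*_{\kappa,\lambda}\ast\co^\lambda_{\F_\kappa}$ there are direct extensions and a $\leq^*$-cone-isomorphism between the cones. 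The strategy mirrors Lemma~\ref{Lem0:CHomog}/\ref{Lem:CHomog}: one ``aligns'' the stems and the collapse conditions attached to them, then translates closed-bounded-set components by a finite shift. Concretely, given $(t_0,T_0,Q_0)$ and $(t_1,T_1,Q_1)$ in $\qo^*_{\kappa,\lambda}$ (after absorbing the $\co^\lambda_{\F_\kappa}$ components into the picture), use the amalgamation Lemma~\ref{Lem.amalgamation.in.Q*} and the tree-surgery facts of Remark~\ref{RMK:Qconstruction} to pass to direct extensions whose stems agree with a common $\tau$-coherent sequence $\bar t$ and whose trees and $Q$-functions agree above $\bar t$; the coherence conditions (iii),(v) of Definition~\ref{def:Q*poset} are what make such an alignment possible without destroying conditionhood, and the $Q(\emptyset)\in H$ requirement is preserved because we only take direct extensions that strengthen $Q(\emptyset)$ within $H$ (using that $H$ is generic, so such strengthenings are dense). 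Once the two cones have been brought to a common ``head'', the identity map on the common tail, composed with the bookkeeping bijection between the two finite heads, is the desired isomorphism; it manifestly respects $\leq^*$ because $\leq^*$ only strengthens trees/$Q$-functions/collapse conditions, none of which is touched by the head-swap. For the $\co^\lambda_{\F_\kappa}$ coordinate one reuses verbatim the shift map $\sigma(\langle c,A\rangle) = \langle c_1 \cup (c\setminus \nu), A\rangle$ from the proof of Lemma~\ref{Lem0:CHomog}, choosing $\nu \in b^\lambda_\kappa$ above both closed sets, which is legitimate since $b^\lambda_\kappa$ is almost contained in every $\F_\kappa$-set.

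Having checked (i) and (ii), Lemma~\ref{FACT:homogiter} yields that $\wo$ is cone homogeneous, and moreover the isomorphism it produces respects $\leq^*_{\wo}$, which is the first assertion of the lemma. For the final sentence, one observes that $\bar\co_{\F_\kappa}$ is literally the same underlying set as $\co^*_{\F_\kappa}$ with the coarser order $\leq'$ (pulled back along $\pi$) but the \emph{same} $\leq^*$; hence $\po\ast\col(\lambda,<\kappa)\ast\bar\co_{\F_\kappa}$ and $\wo$ have the same direct extension order and the cone isomorphism $\varphi$ built above --- which respects $\leq^*$ --- automatically respects $\leq'$ as well, since $\leq'$ is definable from $\leq^*$ together with the OD projection $\pi$ and the (OD) regular order, all of which are preserved by $\varphi$. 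Thus $\po\ast\col(\lambda,<\kappa)\ast\bar\co_{\F_\kappa}$ again meets the hypotheses of Lemma~\ref{FACT:homogiter} and is cone homogeneous. The main obstacle I anticipate is the stem-and-collapse alignment step for $\qo^*_{\kappa,\lambda}$: one must be careful that the surgeries of Remark~\ref{RMK:Qconstruction} and Lemma~\ref{Lem.amalgamation.in.Q*}, applied to both conditions, can be carried out \emph{compatibly} so that the resulting trees and suitable functions genuinely coincide above the common stem --- and that the ambient generic $H$ is not an obstruction, i.e. that the required direct extensions strengthening $Q(\emptyset)$ can be found inside $H$. This is where the bulk of the verification lies; the $\co^\lambda_{\F_\kappa}$ part and the bookkeeping are routine given the earlier lemmas.
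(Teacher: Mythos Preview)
Your plan to verify the hypotheses of Lemma~\ref{FACT:homogiter} factor by factor has a genuine gap at the $\co^*_{\F_\kappa}$ (really the $\qo^*_{\kappa,\lambda}$) stage, and this is not merely a matter of careful verification.

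First, a small but fatal point: direct extensions in $\qo^*_{\kappa,\lambda}$ do \emph{not} change the stem $t$. So your step ``pass to direct extensions whose stems agree with a common $\tau$-coherent sequence $\bar t$'' is impossible when $b_{t_0}\neq b_{t_1}$. Lemma~\ref{Lem.amalgamation.in.Q*} is of no help here; it amalgamates one-point extensions back into a single direct extension and says nothing about changing stems.

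Second, and more importantly, even after you align the trees (say to a common $T$), the two suitable functions $Q_0,Q_1$ will in general be incompatible at nodes $s\neq\emptyset$: only $Q_0(\emptyset),Q_1(\emptyset)$ are guaranteed to lie in $H$, while $Q_0(\l\nu\r),Q_1(\l\nu\r)$ are arbitrary collapse conditions. The one-point-extension relation of $\qo^*_{\kappa,\lambda}$ is $H$-dependent ($\nu$ is legal iff $Q(\l\nu\r)\in H$), so an isomorphism living purely inside $\qo^*_{\kappa,\lambda}$ over a \emph{fixed} $H$ would have to match the sets $\{\nu: Q_0(\l\nu\r)\in H\}$ and $\{\nu: Q_1(\l\nu\r)\in H\}$, and there is no reason these agree or are related by any definable bijection. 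In other words, $\qo^*_{\kappa,\lambda}$ is not, on its own, cone homogeneous in $V[G\ast H]$ in the $\leq^*$-respecting sense that condition~(ii) of Lemma~\ref{FACT:homogiter} demands.

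The paper's proof avoids this by \emph{not} decoupling the collapse from $\qo^*_{\kappa,\lambda}$. It first invokes the cone isomorphism $\psi$ for $\po\ast\qo_{\kappa,\lambda}$ from \cite[Theorem~4.6]{bunger} (handling the stem discrepancy at that level), and then, crucially, builds a whole \emph{family} $\{\tau_s\}_{s\in T^*}$ of Levy-collapse automorphisms, one for each node of the common tree, chosen coherently so that $\tau_s$ carries (an extension of) $Q_0^\psi(s)$ to (an extension of) $Q_1(s)$. The resulting map $\bar\varphi$ on $\po\ast\col(\lambda,<\kappa)\ast\qo^*_{\kappa,\lambda}$ applies the collapse automorphism $\tau_s$ that depends on the current stem $s$; this is precisely what repairs the mismatch of legal one-point extensions. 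Only after this intertwined construction does one append the easy $\co^\lambda_{\F_\kappa}$ step via Lemma~\ref{Lem:CHomog}. So the architecture is essentially different from a three-factor application of Lemma~\ref{FACT:homogiter}, and your proposal would need to be reworked along these lines.
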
 

We observe that assuming the coherent sequence $\U$  (by which $\wo = \po \ast \col(\lambda,<\kappa)  \ast \co^*_{\F_\kappa}$ is defined) is ordinal definable in $V$, 
then the statement of the lemma guarantees that $\wo$ satisfies the requirements of the iterated poset $\qo_\alpha$ from Lemma \ref{FACT:homogiter}. 
\begin{proof}
Let us start with the last assertion. Since the identity is a projection from $\co^*_{\F_\kappa}$ to $\bar{\co}_{\F_\kappa}$, an isomorphism of a cone of elements in $\co^*_{\F_\kappa}$ naturally induces an isomorphism of the corresponding cone in $\bar\co_{\F_\kappa}$.

Let $w_0 = \langle p_0, c_0, \langle q_0, \name{x}_0\rangle\rangle$ and 
$w_1 = \langle p_1, c_1, \langle q_1, \name{x}_1\rangle\rangle$, where the conditions $q_0 = \langle t_0, T_0, Q_0\rangle$ and $q_1 = \langle t_1, T_1, Q_1\rangle$ belong to $\qo^*_{\kappa,\lambda}$. 

By \cite[Theorem 4.6]{bunger} applied to the iteration $\po \ast \qo_{\kappa,\lambda}$, there are direct extensions $\l p_0^*, (t_0,T) \r$ of 
$\l p_0, (t_0,T_0) \r$, and $\l p_1^*, (t_1,T) \r$ of $\l p_1, (t_1,T_1) \r$, with a common top tree $T$, and an 
isomorphism $\psi$ between the cone below
$\l p_0^*, (t_0,T) \r$ and the cone below
$\l p_1^*, (t_1,T) \r$.
We record here that the map $\psi$ constructed in the proof of \cite[Theorem 4.6]{bunger} satisfies two additional 
 properties: First, it does not make any changes to the $\F_{\kappa}$-trees $S$ appearing in conditions $p \fr \l s,S \r \in \po \ast \qo_{\kappa,\lambda}$.
Second, it respects the direct extension order of  $\po \ast \qo_{\kappa,\lambda}$.  \vskip\bigskipamount

Next, we move to examine the Levy collapse condition and the suitable functions in the conditions from $\wo$. 
Since the collapsing forcing $\col(\lambda, <\kappa)$ is evaluated in the generic extension by $\po$, $\psi$ naturally acts also on the $\po$-names $c_0$ and $Q_0$ that appear in $w_0$. As usual, we denote the resulting names by $c_0^\psi$ and $Q_0^\psi$. 
Let $\tau_\emptyset$ be a $\po$-name of an automorphism of the Levy collapse poset which maps an extension $c_0'$ of $c_0^{\psi}$ to an extension $c_1^*$ of $c_1$, 
and define $c_0^* = (c_0')^{\psi^{-1}}$. 
Note that since $c_i$ forces $Q_i(\emptyset) \in \name{H}$, we may extend $Q_0(\emptyset),Q_1(\emptyset)$ to $Q_0^*(\emptyset),Q_1^*(\emptyset)$ so that  $c_i = Q_i^*(\emptyset)$ for $i = 0,1$. 
Next, for each $s \in T$ and $i = 0,1$, define $\dom_1(Q_i(s)) = \{ \alpha < \kappa \mid Q_i(s)\uhr \lambda \times \{\alpha\} \neq \emptyset\}$
and $\rho^i_s = \sup(\dom_1(Q_i(s)))$. Set $\rho_s = \max(\rho_s^0,\rho_s^1)$. 
By moving to a direct extension tree $T^*$ of $T$, we may assume that 
for every $s \in T^*$ and $\nu \in \succ_{T^*}(s)$, $\pi^{t \fr s}_0(\nu) > \rho_s$ where the projection is computed in both generic extensions. 
This leaves enough space between the conditions $Q_i(s), Q_i(s \fr \l \nu \r)$, $i = 0,1$, to define autormorphisms taking an extension $Q_0^*(s)$ of $Q_0(s)$ to an extension $Q_1^*(s)$ of $Q_1(s)$, without conflicting with $Q_i(s \fr \l \nu \r)$, $i = 0,1$. We  can therefore define by induction on the lexicographic order $<_{lex}$ on $T^*$ (where two sequences are compared from their top elements down)
 automorphisms $\tau_s$, $s \in T^*$, of $\col(\lambda,<\kappa)$, and collapse extensions $Q_i^*(s) \geq Q_i(s)$, $i = 0,1$, 
with the following properties: For all $s \in T^*$, 

\begin{itemize}
\item $\tau_s$ is supported in $\col(\lambda,<\rho_s)$,\footnote{I.e., for every $p \in \col(\lambda,<\kappa)$, if $p = p_0 \cup p_1$ where $p_0 = p \uhr \lambda \times \rho_s$, then $\tau_s(p) = \tau_s(p_0) \cup p_1$.}
\item $\tau_s((Q_0^*)^\psi(s)) =Q^*_1(s)$,
\item If $s' \in T^*_s$ then $\tau_{s'}\uhr \col(\lambda,<\pi_0^{t \fr s}(s)) = \tau_s$,
\item If $\bar{s} \in T^*$ and $b_{t_0 \fr s} = b_{ t_0 \fr \bar{s}}$ then $\tau_{s} = \tau_{\bar{s}}$. 
\end{itemize}


Let $\bar{\wo} = \po \ast \col(\lambda,<\kappa) \ast \qo_{\kappa,\lambda}^*$  be the initial forcing iteration of 
\[\wo = \po \ast \col(\lambda,<\kappa) \ast \co^*_{\F_\kappa} = \po \ast \col(\lambda,<\kappa) \ast (\qo_{\kappa,\lambda}^* \ast \co^\lambda_\kappa).\]
Let $w_0\uhr \bar{\wo} = \l p_0, c_0, \l t_0,T_0,Q_0\r \r$ and $w_1\uhr \bar{\wo} = \l p_1,c_1,\l t_1,T_1,Q_1\r \r$ be the restrictions of $w_0,w_1$ to $\bar{\wo}$
and consider their direct extensions $\bar{w_o},\bar{w_1}$ in $\bar{\wo}$, defined by 
$\bar{w_i} = \l p^*_0, c^*_0, \l t_0,T^*,Q^*_0\r \r$, $i = 0,1$. \vskip\bigskipamount

Our choice of cone isomorphism $\psi$ for $\po \ast \qo_{\kappa,\lambda}$ together with 
the collection of Levy-Collapse automorphisms $\vec{\tau} = \{\tau_s\}_{s \in T^*}$, naturally induces a function $\bar{\varphi}$ on the cone $\bar{\wo}/ \bar{w_0}$, defined as follows.
For a condition
$\bar{w} = \l p, c,\l s , S, Q\r\r $, we set  $\bar{\varphi}(\bar{w}) = \l p',c',\l s', S,Q'\r\r$ to be: 
\[
\l p',\l s',S\r\r = \psi(\l p',\l s,S\r\r ) , \quad c' = \tau_{s}(c^\psi), \quad  Q'(s') = \tau_{s \fr s'}(Q(s'))
\]

We claim that $\l p',c',\l s',S, Q'\r\r$ is a condition in $\bar{\wo}$.
First, it is immediate from our choice of $p,c$ that  $\l p',c'\r \in \po * \col(\lambda,<\kappa)$. It therefore remains to verify that 
$\l s',S,Q'\r$ is forced by $\l p',c'\r$ to be a condition in $\qo^*_{\lambda,<\kappa}$.
The fact that $\l s',S,Q'\r$ satisfies requirements $(1)$ and $(2)$ of definition \ref{def:Q*poset} is immediate. 
To verify  the coherency requirement $(3)$ of definition \ref{def:Q*poset}, we note that $Q'$ is forced by $p'$ to be $(s, S)$-suitable. 
Indeed, it follows from our choice of $\tau_{s'}$ that its support is bounded below the projection $\pi^{s \fr s'}_0(\nu)$, for any $\nu \in \succ_{T^*}(s')$.
Property $(4)$ follows from the fact that the statement ``$c \Vdash_{\col(\lambda,<\kappa)} Q(\emptyset) \in \name{H}$'' is forced by $p \in \po$, 
which implies that $\tau_\emptyset(c^\psi) \Vdash \tau_\emptyset(Q(\emptyset)^\psi) \in \tau_\emptyset(\name{H}^\psi)$ is forced by $p'$. This, combined with definition of 
$c'$ and $Q'$, and the fact that the name $\name{H}$ is a fixed point of both $\tau_\emptyset$ and $\psi$, guarantees that requirement $(4)$ is satisfied.  
Next, $(s',S,Q')$ satisfies requirement $(5)$ of definition \ref{def:Q*poset} by a similar argument to the previous one, using the fact $(s, S, Q)$ satisfies property $(5)$ together with the last property listed above for $\{\tau_s\}_{s\in T}$.
Having verified that  $\l p',c',\l s',S, Q'\r\r$ is a condition in $\bar{\wo}$ it is straightforward to check that it extends $\bar{w_1}$ and 
thus $\bar{\varphi}\colon \bar{\wo}/\bar{w_0} \to \bar{\wo}/\bar{w_1}$ is a well defined function. In order to show that it is cone isomorphism we need to show that it is order-preserving. 

Let us remark that the automorphism $\psi$ modifies the values of $b_s$ for $s \in T^*$ by changing the value of their initial segments. Since those initial segments do not affect the definition of $\tau_{b_s}$, we will ignore it and write always $b_s$ instead $b_s^\psi$.  

Let $\bar{w}_1 = \l p_1, c_1, \l s_1, S_1, Q_1\r \r,\, \bar{w}_2 =\l p_2, c_2, \l s_2, S_2, Q_2\r \r$ be pair of conditions in the cone above $\bar{w}_0$. We need to show that $\bar{\varphi}(\bar{w}_1) \leq \bar{\varphi}(\bar{w}_2)$ if and only if $\bar{w}_1 \leq \bar{w}_2$. 

For direct extensions, this is clear, as the tree $S_1$ does not move under $\bar\varphi$. Let us assume that $\bar{w}_2$ is a one-point extension of $\bar{w}_1$, by the point $\l\nu\r$. By moving to a dense subset, $c_2 \geq c_1, Q_1(\l \nu \r)$ and $b_{s_2} = b_{s_1 \fr \l\nu\r}$. Let us apply $\bar{\varphi}$ on $\bar{w}_1, \bar{w}_2$. The trees $S_1$ and $S_2$ do not move, so we must verify that $\l\nu\r$ is still a legitimate choice for an one-point extension of $\bar\varphi(\bar{w}_1)$. Indeed, $\tau_{b_{s_2}}(c_2)$ is (by the definition of $\tau_{b_{s_2}}$) stronger than $\tau_{\l \nu\r}(Q_1(\l\nu\r)$. Thus, we conclude that $\bar\varphi(\bar w_2)$ is an one-point extension of $\bar\varphi(\bar w_2)$ by $\l\nu\r$. The other direction is the same.\vskip\bigskipamount

 Finally, to obtain a desirable cone isomorphism $\varphi$ for $\wo = \bar{\wo} \ast \co^\lambda_\kappa$, it remains to extend $\bar{\varphi}$ to the final additional components $\name{x_0},\name{x_1}$ of $\co^\lambda_{\F_\kappa}$. 
 The proof Lemma \ref{Lem:CHomog} shows that there are $\bar{\wo}$-names  $\name{y}_0',\name{y}_1$ of extensions of $\name{x}_0^{\bar{\varphi}},\name{x}_1$ respectively, 
 and a name of a cone isomorphism $\sigma\colon \co^\lambda_\kappa/ \name{y}_0' \to \co^\lambda_\kappa/\name{y}_1$. 
 Accordingly, we set $\name{y}_0 = (\name{y}_0')^{\bar{\varphi}^{-1}}$ and 
define direct extensions $w_0^* \geq^* w_0,w_1^*\geq^* w_1$ and a map $\varphi\colon \wo/w_0^* \to \wo/w_1^*$ by 
 $w_i^* = \bar{w}_i \fr \name{y}_i$ and 
 \[ \varphi( \l \bar{w},\name{y} \r) = \l \bar{\varphi}(\bar{w}),\sigma(\name{y}^{\bar{\varphi}})\r.\]
 The fact $w_0^*,w_1^*,\varphi$ satisfy the result stated in the lemma is an immediate consequence of the fact $\bar{w}_0,\bar{w}_1,\bar{\varphi}$ satisfy similar properties for $\bar{\wo}$ and our choice of $\varphi$,$\name{y}_0,\name{y}_1$. 
\end{proof}
\newpage

\section{Strong Measurability at Successors of Singulars}\label{Sec:suc.of.sing}
Suppose that $V = \HOD$, $\kappa$ is a supercompact cardinal and $\lambda > \kappa$ is a measurable cardinal with a normal measure $\mathcal{U}$.  
We would like to construct a cone homogeneous poset in $V$ which will collapse $\lambda$ to be the successor of $\kappa$, change the cofinality of $\kappa$ to $\omega$,  and add a closed unbounded subset 
of $\lambda$ whose restriction to the set of $\{ \alpha < \lambda \mid \alpha \text{ is regular in } V\}$ is almost contained in every set $A \in \mathcal{U}$.\vskip\bigskipamount

It is natural to attempt obtaining this result by starting with an indestructible supercompact cardinal $\kappa$, and forcing with a Levy collapse of $\lambda$ to $\kappa^+$ followed by a Prikry forcing at $\kappa$ and a club forcing at $\lambda$. The difficulty with this approach is in its second step, where the choice of the measure on $\kappa$ depends on the generic filter for the Levy collapse and might lead to a Prikry generic sequence which will introduce to $\HOD$ information about the collapse of $\lambda$ to $\kappa^+$, and in particular prevent from $\HOD$ to witness that $\lambda$ is a measurable cardinal. 

Instead, our approach will be based on recent use of the supercompact extender based forcing, introduced by Merimovich (\cite{Merimovich3}). Given a supercompact cardinal $\kappa$, we derive a $(\kappa,\lambda)$-supercompact extender $E$ from a supercompact embedding $j\colon V \to M$ for which ${}^{\lambda}M \subseteq M$. Let $\mathbb{P}_{E}$ be the supercompact extender based forcing associated to the extender $E$ of \cite{Merimovich3}. The conditions of $\po_E$ are pairs of the form $\langle f, T\rangle$ where $f$ is roughly a condition in the Cohen forcing and $T$ is a tree, with large splittings. We denote by $\po_E^*$ the Cohen part. 

The forcing $\po_E$ preserves $\lambda$ and singularizes all the regular cardinals in the interval $[\kappa, \lambda)$. We will follow the definitions and notations of \cite{Merimovich3}. 
In \cite{GitikMerimovich}, Gitik and Merimovich show that this forcing is weakly homogeneous (and therefore cone homogeneous). 

Let $\mathcal U$ be a normal measure on $\lambda$ in the ground model. We would like to force a club to diagonalize $\mathcal{U}$ relative to the set of $V$-regular cardinals below $\lambda$. 
Note that the ordinals of uncountable cofinality below $\lambda$ in the extender based forcing extension are of measure zero in $\mathcal{U}$. Therefore, our club shooting poset has to allow $V$-singular ordinals as well. 
Moreover, since the set of previous inaccessible cardinals below $\lambda$ does not reflect at its complement, it is impossible for the generic club to avoid ground model singular cardinals of countable cofinality. 
Thus, we restrict our club forcing poset to diagonalize $\mathcal{U}$ only relative to the set of 
the regular cardinals in $V$. 
To make this precise, we denote by $\Sing$ the set of all ground model singular cardinals below $\lambda$, and define
$\bar{\U}= \{ \Sing \cup A \mid A \in \mathcal{U}\}$. We  force with the poset $\co_{\bar{\U}}$, consisting of pairs $(c,B)$ where $c \subseteq \lambda$ is a closed bounded set and $B \in \bar{\U}$. The extension order is as in the previous section.\vskip\bigskipamount

We start by recalling a fundamental and useful fact, which lies in the heart of the proof of the Prikry Property of $\mathbb{P}_E$.
\begin{lemma}\label{lem:extender.properness}
Let $M$ be an elementary sub-model of $H_\chi$ for some large $\chi$ such that $M \cap \lambda = \delta \in \lambda$ is inaccessible cardinal and $M^{<\delta} \subseteq M$. Let $p\in M \cap \mathbb{P}_E$. 

Then, there is a condition $f^* \in \po_E^{*}$ which is $M$-generic (namely, it belongs to every dense open subset of $\po_E^*$ in $M$) and $\dom f^* = M \cap \lambda$. Moreover, if $p^* = \langle f^*, T\rangle$ is a condition in $\po_E$, then there is $T^* \subseteq T$, $E(f^*)$-large such that $T^* \subseteq M$ and $D \in M$ is a dense open subset of $\po_E$ then there is a natural number $n$ such that for every $\langle \nu_0, \dots, \nu_{n-1}\rangle$ in the $n$-th level of $T*$, $p^*_{\langle \nu_0, \dots, \nu_{n-1}\rangle} \in D$. 
\end{lemma}
\begin{proof}
The first claim follows from the closure of $\po_E^*$. Let us focus in the second part. 

Let $f^*$ be as in the lemma. Let $D\in M$ be dense open. For each $\langle \nu_0, \dots, \nu_{n-1}\rangle \in M$ and for each $g\in \po_E^* \cap M$, we can ask whether there is a condition $q \in D$ of the form $\langle h, S\rangle \in D$ such that $h \geq^* g_{\langle \nu_0, \dots, \nu_{n-1}\rangle}$. The set of conditions that decide this statement is dense open and definable in $M$ and thus $f^*$ decides whether there is such extension or not (for each possible $\langle \nu_0, \dots, \nu_{n-1}\rangle$). Let $D_{\vec{\nu}}$ be this set and let us split it into two parts $D_{\vec{\nu}}^0 \cup D_{\vec\nu}^1$ according to the decision, where conditions in $D_{\vec\nu}^0$ are direct extensions that enter $D$ after the non-direct extension.

Let $p^* = \langle f^*, T\rangle$. 
Since a typical point $\nu$ in a measure one tree $T$, associate with the measures $E(f^*)$ is a finite sequence of elements contained in $M$ each has size $|\nu| < \kappa$, we may assume that $T \subseteq M$. There is an extension $q \geq p^*$  in $D$. By the definition of the order of $\po_E$, $q$ is obtained by taking first some Prikry extension and then a direct extension, and therefore the Prikry extension is done using some $\vec{\nu} \in M$.  Thus, for this specific Prikry extension, $f^* \in D_{\vec{\nu}}^0$. We conclude that already $p^*_{\vec{\nu}} \in D$. 

We can now shrink $T$ in order to stabilize the length of the extensions that enter $D$. 
\end{proof}

\begin{lemma}
$\bar{\mathcal{U}}$ extends to a $\lambda$-complete filter in the generic extension by $\po_E$.
\end{lemma}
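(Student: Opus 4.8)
The plan is to follow the proof of Lemma \ref{Lem1}, replacing the chain-condition argument used there by the properness-type statement of Lemma \ref{lem:extender.properness}. It suffices to prove the following capturing claim: for every sequence $\langle A_\nu \mid \nu<\beta\rangle\in V[G_E]$ with $\beta<\lambda$ and each $A_\nu\in\mathcal U$ (a ground-model set), there is $A\in\mathcal U$ with $A\subseteq\bigcap_{\nu<\beta}A_\nu$. Granting this, if $\langle X_\nu\mid\nu<\beta\rangle\in V[G_E]$ is a sequence of sets in the filter generated by $\bar{\mathcal U}$, then in $V[G_E]$ we may pick for each $\nu$ a ground-model set $A_\nu\in\mathcal U$ with $\Sing\cup A_\nu\subseteq X_\nu$; since $\Sing\cup\bigcap_\nu A_\nu\supseteq\Sing\cup A$ for $A$ as in the claim, and $\Sing\cup A\in\bar{\mathcal U}$, the set $\bigcap_\nu X_\nu$ lies in the filter. (Properness of the filter, i.e.\ $\emptyset$ is not a member, and closure under finite intersection, are immediate from $\Sing\neq\emptyset$ and the closure of $\mathcal U$ under finite intersections.)

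To prove the claim, fix a $\po_E$-name $\dot{\vec A}$ and a condition $p$ forcing that $\dot{\vec A}=\langle\dot A_\nu\mid\nu<\beta\rangle$ is a sequence of elements of $\check{\mathcal U}$; by a routine density argument it is enough to produce $q^*\geq p$ and $A\in\mathcal U$ with $q^*\Vdash\check A\subseteq\bigcap_{\nu<\beta}\dot A_\nu$. Using that $\lambda$ is Mahlo in $V$, fix an inaccessible $\delta<\lambda$ with $\beta<\delta$ and an elementary submodel $M\prec H_\chi$ for suitably large regular $\chi$, with $M\cap\lambda=\delta$, $|M|=\delta$, ${}^{<\delta}M\subseteq M$, and $\{\po_E,E,\mathcal U,\dot{\vec A},p,\beta\}\subseteq M$; then $\delta\subseteq M$ and hence $\dot A_\nu\in M$ for each $\nu<\beta$. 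By Lemma \ref{lem:extender.properness} there is an $M$-generic condition $p^*=\langle f^*,A\rangle\geq p$ with $\dom f^*=\delta$, together with an $E(f^*)$-large $A^*\subseteq A$ with $A^*\subseteq M$, such that for every dense open $D\in M$ there is $n<\omega$ with $p^*_{\langle\nu_0,\dots,\nu_{n-1}\rangle}\in D$ for all $\langle\nu_0,\dots,\nu_{n-1}\rangle\in (A^*)^n$.

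For each $\nu<\beta$ let $D_\nu\in M$ be the dense open set of conditions deciding $\dot A_\nu$ to equal a fixed ground-model member of $\mathcal U$, and let $n_\nu<\omega$ be the integer provided by Lemma \ref{lem:extender.properness} for $D_\nu$; thus every $p^*_{\vec\nu}$ with $\vec\nu\in (A^*)^{n_\nu}$ forces $\dot A_\nu=\check A_{\nu,\vec\nu}$ for a uniquely determined $A_{\nu,\vec\nu}\in\mathcal U$. Since $A^*\subseteq M$ and $|M|=\delta<\lambda$, the collection $\mathcal A=\{A_{\nu,\vec\nu}\mid\nu<\beta,\ \vec\nu\in (A^*)^{n_\nu}\}$ is a subset of $\mathcal U$ of size $<\lambda$, and it lies in $V$: indeed $p^*$ is a condition in the ground-model forcing $\po_E^*$ and $A^*$ was constructed inside $V$, the names $\dot A_\nu$ are in $V$, and the map $\vec\nu\mapsto A_{\nu,\vec\nu}$ is defined from the $V$-definable forcing relation of $\po_E$. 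By $\lambda$-completeness of $\mathcal U$ in $V$, $A:=\bigcap\mathcal A\in\mathcal U$. Finally set $q^*=\langle f^*,A^*\rangle\geq^* p^*$. If $G_E\ni q^*$ is generic, then for each $\nu<\beta$ the generic addition through $A^*$ yields (by the basic behaviour of $\po_E$) a sequence $\vec\nu\in (A^*)^{n_\nu}$ with $q^*_{\vec\nu}\in G_E$, hence $p^*_{\vec\nu}\in G_E$, so $\dot A_\nu$ is evaluated as $A_{\nu,\vec\nu}\supseteq A$. Therefore $q^*\Vdash\check A\subseteq\bigcap_{\nu<\beta}\dot A_\nu$, as required.

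The main obstacle is precisely this capturing step: one must guarantee that, as $\vec\nu$ ranges over initial segments of possible generic additions, the ground-model value forced on $\dot A_\nu$ along $p^*_{\vec\nu}$ stays inside a family of $\mathcal U$-sets of size $<\lambda$ which itself belongs to $V$. This is where the conclusion of Lemma \ref{lem:extender.properness} ($A^*\subseteq M$, $|M|<\lambda$) is used together with the facts that the direct-extension condition $p^*$ and the tree $A^*$ are ground-model objects and that the $\po_E$-forcing relation is definable in $V$, so that $\lambda$-completeness of $\mathcal U$ in $V$ can be invoked. A verbatim repetition of the argument with $D_\nu$ replaced by sets coding regressive functions would also give normality of the filter, though this is not needed here.
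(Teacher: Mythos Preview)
Your proof is correct and follows the same core idea as the paper's: use the strong Prikry property to reduce each name $\dot A_\nu$ to fewer than $\lambda$ possible ground-model values, then intersect. The organization differs, however. The paper first observes that since $\kappa$ becomes singular, any failure of $\lambda$-completeness already occurs at some $\rho<\kappa$; it then builds a $\leq^*$-increasing sequence $\langle p_i\mid i<\rho\rangle$ (using the $\kappa$-closure of the direct extension order), where each $p_i$ together with an integer $n_i$ witnesses the strong Prikry property for $\dot A_i$, and lets $B_i$ be the intersection of the $<\lambda$ many values decided by the $n_i$-step extensions of $p_i$. Your route instead handles an arbitrary $\beta<\lambda$ in one shot by packaging all the names into a single elementary submodel $M$ of size $<\lambda$ and invoking Lemma~\ref{lem:extender.properness} once to obtain a single condition $p^*$ with a tree $A^*\subseteq M$; the bound $|\mathcal A|<\lambda$ then comes for free from $|M|<\lambda$. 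Your approach is a bit heavier (it leans on the full properness lemma and the choice of an inaccessible $\delta$), but it avoids the reduction to $\rho<\kappa$ and the inductive construction of the $p_i$'s, and makes the counting step completely explicit.
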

\begin{proof}
Assume that this is not the case. Since $\kappa$ is singular, the closure of $\bar{\mathcal{U}}$ must drop to some cardinal $\rho < \kappa$. Let $\langle \name{A}_i \mid i < \rho\rangle$ be a sequence of names of elements in $\bar{\mathcal{U}}$ which are forced to have non-measure one intersection.

Using the strong Prikry Property, we can find a sequence of direct extensions $p_i$, and natural numbers $n_i$ such that any $n_i$-length Prikry extension of $p_i$ decides the value of $\name{A}_i$. Since there are fewer than $\lambda$ many such extensions, we can find a set $B_i \in \bar{\mathcal{U}}$ such that $p_i \Vdash B_i \subseteq \name{A}_i$. In particular, $p_{\rho} \Vdash \bigcap B_i \subseteq \bigcap \name{A}_i$, but $\bigcap B_i \in \bar{\mathcal{U}}$. 
\end{proof}

\begin{lemma}
$\co_{\bar{\mathcal{U}}}$ is $\lambda$-distributive in the generic extension by $\po_E$. 
\end{lemma}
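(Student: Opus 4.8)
The plan is to mimic the distributivity arguments of Propositions \ref{prop01} and \ref{prop1}, using Lemma \ref{lem:extender.properness} as the analog of Lemmas \ref{Lem01} and \ref{Lem2}. Work in $V[\po_E]$. Since $\lambda$ is regular there (it is preserved by $\po_E$) and $\lambda = \kappa^+$ with $\cf(\kappa) = \omega$, it suffices to show that the intersection of a family $\{D_i \mid i < \mu\}$ of dense open subsets of $\co_{\bar{\mathcal{U}}}$ is dense, for any $\mu < \lambda$. So fix such a family, fix a condition $(c, B) \in \co_{\bar{\mathcal{U}}}$, and fix a large regular $\chi$ with $\co_{\bar{\mathcal{U}}}, \{D_i \mid i < \mu\}, (c,B) \in H_\chi$. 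The key point is to find an elementary submodel $M \prec H_\chi$ with $M \cap \lambda = \delta \in \lambda$, $M^{<\delta} \subseteq M$, $\mu + 1 \subseteq M$, containing all the relevant parameters, such that $\delta \in \bar{\mathcal{U}}$-diagonalizes $M$: that is, $\delta \in A$ for every $A \in \bar{\mathcal{U}} \cap M$. I would obtain such $M$ by pulling back through the extender-based generic: in $V$, the names for these objects are captured on a measure-one set via Lemma \ref{lem:extender.properness} (the $M$-genericity of $f^*$ and the stabilization of the length of extensions entering dense sets), and then one applies the supercompact embedding $j \colon V \to N$ witnessing the extender $E$ to the elementary submodel, exactly as in the proofs of Lemmas \ref{Lem01} and \ref{Lem2}; the image structure $j``M_0$ (for a suitable $V$-submodel $M_0$ of size $<\lambda$, closed under ${<}\kappa$-sequences) will have $j``M_0 \cap j(\lambda) = \lambda$ and every $A \in j``M_0 \cap j(\bar{\mathcal{U}})$ is of the form $j(\bar A)$ with $\bar A \in \bar{\mathcal{U}}$, hence $\lambda \in A$. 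Reflecting back gives the desired $M$ in $V[\po_E]$.

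Granting such an $M$, the construction proceeds as in Proposition \ref{prop1}. Since $\delta = \sup(M \cap \lambda)$ has cofinality... here one must be careful: $\delta$ may be $V$-singular, but that is allowed since $\co_{\bar{\mathcal{U}}}$ permits $V$-singular ordinals into the generic club (this is exactly why we replaced $\mathcal{U}$ by $\bar{\mathcal{U}}$). Fix a cofinal sequence $\langle \delta_j \mid j < \cf(\delta) \rangle$ in $\delta$ — but more carefully, we should enumerate $M \cap \lambda$ cofinally in $\delta$ in order-type $\mathrm{cf}(\delta) \le \mu$ if $\mathrm{cf}(\delta)$ is not too large; since $M^{<\delta}\subseteq M$ forces $\mathrm{cf}(\delta)\ge\delta$ unless $\delta$ is regular, in fact $\delta$ will be a (possibly $V$-singular) cardinal of cofinality $\ge$ some prescribed value, and the safest route is to take $M$ with $M^{\le\mu}\subseteq M$ so that $\mathrm{cf}(\delta) > \mu$, and then build an increasing chain of extensions $\langle x_j \mid j < \mu \rangle \subseteq M$ of $(c,B)$ with $x_{j+1} \in D_j$ and $\max(c^{x_{j+1}})$ strictly increasing and cofinal below $\delta$. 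At limit stages $j^* \le \mu$ one forms $x_{j^*} = \langle \{\delta'\} \cup \bigcup_{j < j^*} c^{x_j}, \bigcap_{j < j^*} A^{x_j} \rangle$ where $\delta' = \sup_{j<j^*}\max(c^{x_j})$; this is a legitimate condition because $\delta' \in \bar{\mathcal{U}}$-diagonalizes (so $\delta' \in A^{x_j}$ for all $j$) — wait, here I need $\delta' = \sup_{j<j^*}\max(c^{x_j})$ rather than $\delta$ itself at intermediate stages, and I need $\delta' \in A^{x_j}$; since $A^{x_j}\in\bar{\mathcal U}\cap M$ and $\delta' \in M$ (as $j^* \le \mu$ and $M^{\le\mu}\subseteq M$, the sequence is in $M$, so its sup is in $M$, but membership $\delta'\in A^{x_j}$ needs $\delta'$ to be the diagonalizing point) — so actually I should track a cofinal subsequence of the diagonalizing ordinal, exactly as the "$\alpha_{i_j}$" bookkeeping in Proposition \ref{prop1}: at stage $j$ ensure $\max(c^{x_{j+1}})$ lands at a point $\delta_{i_j} \in M\cap\lambda$ with $\delta \in A^{x_{j+1}}$ automatic, and interleave so that the limit sups are again such points. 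Finally the stage-$\mu$ condition $x_\mu$ (using $\delta$ itself, which diagonalizes all of $\bar{\mathcal{U}}\cap M$) extends $(c,B)$ and lies in $\bigcap_{i<\mu} D_i$.

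The main obstacle is the construction of the diagonalizing submodel $M$ in $V[\po_E]$: this is where Lemma \ref{lem:extender.properness} does real work, and one has to be careful that the extender-based forcing $\po_E$, unlike the simple Levy collapse of Section \ref{Sec-omega1}, does not have a nice chain condition, so the reduction of the name $\name f$ for the Skolem function to something captured in $V$ must go through the strong Prikry property (fixing the length $n$ of extensions entering each dense set) rather than through a chain-condition antichain argument. A secondary subtlety is cofinality bookkeeping at limit stages: $\delta$ may be $V$-singular of small $V$-cofinality, so one cannot simply say "$\cf(\delta) \ge \lambda$"; instead one leans on $M^{\le\mu} \subseteq M$ (arranged by taking $M$ sufficiently closed, which Lemma \ref{lem:extender.properness} permits since the direct-extension order of $\po_E$ is $\lambda$-closed, indeed $\ge\kappa$-closed, and $\mu<\kappa$ would suffice though in general $\mu<\lambda$ needs the full strength) to guarantee the partial chain $\langle x_j \mid j < j^* \rangle$ and its associated sups remain inside $M$ at every limit $j^* < \mu$. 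Once these two points are handled, the rest is the same fusion-style amalgamation as in Propositions \ref{prop01} and \ref{prop1}.
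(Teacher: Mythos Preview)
Your plan has the right overall shape but contains a genuine gap at the intermediate limit stages, which you yourself flag (``wait, here I need $\delta'\in A^{x_j}$\ldots'') without resolving. The fix you propose --- taking $M^{\le\mu}\subseteq M$ so that the partial chain and its sup $\delta'$ remain in $M$ --- does not help: membership $\delta'\in M$ says nothing about $\delta'\in A^{x_j}$. What you actually need is that $\delta'\in\Sing$ (so that $\delta'$ lies in every $\bar{\mathcal U}$-set automatically), and a single diagonalizing model gives you no control over this. The bookkeeping you gesture at from Proposition~\ref{prop1} worked there because one had a \emph{closed} cofinal sequence $b_\alpha$ almost contained in every filter set; here there is no such sequence inside a single $M$.

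The paper's proof differs from yours in two essential respects. First, it works in $V$ with $\po_E$-names, invoking Lemma~\ref{lem:extender.properness} at each stage, rather than trying to build the model in $V[\po_E]$. Second --- and this is the idea you are missing --- it replaces your single model by a continuous chain $\langle M_i\mid i<\rho\rangle$ of length $\rho<\kappa$ (this suffices because $\kappa$ is singular in the extension), with $\delta_i=M_i\cap\lambda$ and $\delta_{i+1}\in\bigcap(\mathcal U\cap M_{i+1})$ at successors. One builds an increasing sequence of $\po_E^*$-generic $f^*_i\subseteq f^*_{i+1}$ along the chain, and arranges $\max(c^{q_{i+1}})=\delta_{i+1}$. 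At a limit stage $i$ one then has $\delta_i=\sup_{j<i}\delta_j$ with $\cf^V(\delta_i)\le|i|<\rho<\kappa<\delta_i$, so $\delta_i$ is $V$-singular and hence lies in $\Sing\subseteq A$ for every $A\in\bar{\mathcal U}$. This is precisely why the poset was defined with $\bar{\mathcal U}=\{\Sing\cup A:A\in\mathcal U\}$ rather than $\mathcal U$.

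A smaller correction: the embedding you invoke to find the diagonalizing ordinal is the supercompact-extender embedding $j$ with $\crit(j)=\kappa$, but the statement ``$j``M_0\cap j(\lambda)=\lambda$'' is false for that $j$. The source of diagonalizing points is simply the $\lambda$-completeness of the normal measure $\mathcal U$ on $\lambda$ (as in Lemma~\ref{Lem01}), not the extender $E$.
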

\begin{proof}
Since $\kappa$ is singular in the extension by $\po_E$, it is enough to show that the forcing $\co_{\bar{\mathcal{U}}}$ is $\rho$-distributive for every $\rho < \kappa$.

We first work in $V$. 
Let $\name{\vec{D}} = \langle \name{D}_i \mid i < \rho\rangle$ be a sequence of $\po_E$-names for dense open subsets of $\co_{\bar{\mathcal{U}}}$, $\rho < \kappa$. Let $\langle p,q\rangle$ be a condition in $\po_E * \co_{\bar{\mathcal{U}}}$.  Let us define an increasing sequence of models $\langle M_i \mid i < \rho\rangle$ such that:
\begin{itemize}
\item $\name{\vec{D}}, \po_E, \co_{\bar{\mathcal{U}}} \in M_0$.
\item $M_i \prec H_\chi$ for some large $\chi$, $M_i \cap \lambda = \delta_i \in \lambda$ inaccessible. 
\item $M_{i}^{<\delta_{i}} \subseteq M_{i}$ and $\delta_{i} \in \bigcap \{A \in \mathcal{U} \cap M_{i}\}$.
\item $\langle M_j \mid j < i\rangle \in M_i$.
\end{itemize}
This chain of models can be easily obtained using the same argument as in Lemma \ref{Lem01}. 

Next, let us pick by induction, for each $i < \rho$, an $M_i$-generic condition $f^*_i\in \po_E^*$ such that $f^*_i \in M_{i+1}$, and $f^*_i \subseteq f^*_j$ for $i < j$. We will define a sequence of names $\name{q}_i$ and a sequence of conditions $p_i$ such that:
\begin{itemize}
\item $p_i = \langle f^*_i, T_i\rangle \in M_{i+1}$, $q_i \in M_i$. 
\item $p_{i+1} \Vdash q_{i+1} \in \name{D}_i$. 
\item The sequence of conditions $p_i$ is $\leq^*$-increasing. Let $p_{\rho}$ be their limit. 
\item $p_{\rho}$  forces that the conditions $q_i$ are increasing and they have a limit $q_\rho$.
\end{itemize}

In $M_{i}$, let $D'_i$ be the dense open set in $\po_E$ of all extensions of $p_i$ that force for some condition $q = (c^q, B^q) \geq q_i$ to be in $\name{D}_i$, and decide its maximum and its large set $B^q$ from $\bar{\mathcal{U}}$. By applying Lemma \ref{lem:extender.properness} inside $M_{i}$, we conclude that there is an $E(f^*_i)$-large tree $T_i\subseteq M_i$ and a natural number $n_i$ such that for the condition $p_{i} = \langle f^*_i, T_i\rangle$, for every $\vec{\nu} \in Lev_{n_i}(T_i)$, $(p_i)_{\vec{\nu}}\in D'_i$. In particular, it picks a condition $q_{i+1, \vec{\nu}} \geq q_i$ from $\co_{\bar{\mathcal{U}}}$, which is going to be in $M_{i}$. Since this condition is in $M_i$, it is going to be bounded below $\delta_{i}$ and its large set belongs to $\bar{\mathcal{U}} \cap M_{i}$. 

Note that the collection of all $n$-step extensions of a fixed condition in $\po_E$ is always a maximal anti-chain above this condition and thus, we can define $\name{q}_{i+1}'$ to be equal to $q_{i+1, \vec{\nu}}$ above $(p_i)_{\vec{\nu}}$, and trivial below any condition which is incompatible with $p_i$.  Finally, we define $\name{q}_{i+1}$ to be the extension of $\name{q}_{i+1}'$ by the single ordinal $\delta_{i}$. By the construction, this is indeed an extension, as $\delta_{i} \in B$ for all $B \in \bar{\mathcal{U}} \cap M_{i}$.

At limit steps, we define $\name{q}_i$ to be the limit of previous conditions. This is possible, since the filter $\bar{\mathcal{U}}$ is still $\lambda$-complete and since the maximal element of the closed set in $\name{q}_j$ is forced to be $\delta_j$ and therefore, the maximal element of $\name{q}_i$ is $\delta_i$ which is singular strong limit cardinal in the limit case.
\end{proof}

\begin{lemma}\label{Lem: preservation of stationarity}
Let $B' \in \mathcal{U}$. Then $B'$ is stationary in $\po_E * \co_{\bar{\mathcal{U}}}$. 
\end{lemma}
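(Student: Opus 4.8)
The plan is to split the statement into two preservation steps: first that $B'$ stays stationary in $V[\po_E]$, and then that $\co_{\bar{\mathcal{U}}}$ preserves the stationarity of $B'$ over $V[\po_E]$. Throughout we may assume, intersecting with the $\mathcal{U}$-set $(\kappa,\lambda)$, that every $\delta\in B'$ is inaccessible in $V$ with $\kappa<\delta<\lambda$; since $\po_E$ singularizes all regular cardinals in $[\kappa,\lambda)$, every such $\delta$ has cofinality $\omega$ in $V[\po_E]$.

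\emph{Stage 1: $B'$ is stationary in $V[\po_E]$.} Let $\name{C}$ be a $\po_E$-name for a club in $\lambda$ and $p\in\po_E$. In $V$ fix a large regular $\chi$ and an internally approachable continuous $\in$-chain of elementary submodels of $H_\chi$; since $B'\in\mathcal{U}$ is stationary, some member $M$ of the chain has $M\cap\lambda=\delta\in B'$, $M^{<\delta}\subseteq M$, and $\{\po_E,\name{C},p,\mathcal{U}\}\subseteq M$. By Lemma~\ref{lem:extender.properness} applied to $p$ and $M$ there are an $M$-generic $f^*\in\po_E^*$ with $\dom f^*=\delta$ and a condition $p^*=\langle f^*,A^*\rangle\geq^* p$ with $A^*\subseteq M$, such that for every dense open $D\in M$ there is $n<\omega$ with $p^*_{\langle\nu_0,\dots,\nu_{n-1}\rangle}\in D$ for all $\langle\nu_0,\dots,\nu_{n-1}\rangle\in(A^*)^n$. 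For $\gamma<\delta$ put
\[
D_\gamma=\{\, r\in\po_E \mid \exists\gamma'>\gamma\ \ r\Vdash\gamma'\in\name{C}\,\}.
\]
Each $D_\gamma$ is dense open (as $\name{C}$ is forced unbounded) and lies in $M$. Because the witnessing ordinals chosen in the proof of Lemma~\ref{lem:extender.properness} are picked inside $M$, whenever $p^*_{\vec\nu}\in D_\gamma$ the corresponding $\gamma'$ lies in $M$, hence $\gamma'<\delta$. Thus in any $\po_E$-generic extension with $p^*$ in the filter, $\name{C}$ meets $(\gamma,\delta)$ for every $\gamma<\delta$, so $\name{C}\cap\delta$ is unbounded in $\delta$; as $\name{C}$ is closed and $\delta$ a limit ordinal, $\delta\in\name{C}$. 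Since $\delta\in B'$, this gives $p^*\Vdash\name{C}\cap\check{B'}\neq\emptyset$.

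\emph{Stage 2: $\co_{\bar{\mathcal{U}}}$ preserves stationarity of $B'$ over $V[\po_E]$.} Work in $V[\po_E]$; there $B'$ is stationary (Stage 1) and $\bar{\mathcal{U}}$ generates a $\lambda$-complete filter. Let $\name{C}$ be a $\co_{\bar{\mathcal{U}}}$-name for a club in $\lambda$ and $q_0\in\co_{\bar{\mathcal{U}}}$. By a standard argument using the normality of $\mathcal{U}$ (carried out over $V$, as in the proof of the $\lambda$-distributivity of $\co_{\bar{\mathcal{U}}}$), choose $\delta\in B'$ and an elementary submodel $M\prec H_\chi^{V[\po_E]}$ with $M\cap\lambda=\delta$, $|M|<\lambda$, $\{\co_{\bar{\mathcal{U}}},\name{C},q_0,\mathcal{U},\Sing\}\subseteq M$, and $\delta\in\bigcap(\mathcal{U}\cap M)$; set $A_M=\bigcap(\mathcal{U}\cap M)\in\mathcal{U}$. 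Fix an increasing $\langle\nu_n\mid n<\omega\rangle$ cofinal in $\delta$, and build an increasing chain $\langle q_n\mid n\leq\omega\rangle$ in $\co_{\bar{\mathcal{U}}}$ as follows: given $q_n\in M$, pick inside $M$ a condition $q'\geq q_n$ with $q'\Vdash\gamma_n\in\name{C}$ for some $\gamma_n>\nu_n$ (possible since $\name{C}$ is forced unbounded; then $\gamma_n\in M$, so $\gamma_n<\delta$), and extend $q'$ to $q_{n+1}\in M$ with $\max c^{q_{n+1}}>\nu_n$, which is legal because every member of $\bar{\mathcal{U}}$ contains all $V$-singular cardinals below $\lambda$ and these are cofinal in $\delta$. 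Finally let $q_\omega=\langle\bigcup_n c^{q_n}\cup\{\delta\},\ \Sing\cup A_M\rangle$. This is a condition of $\co_{\bar{\mathcal{U}}}$ extending every $q_n$: $\Sing\cup A_M\in\bar{\mathcal{U}}$ is contained in each $B^{q_n}$ (which has the form $\Sing\cup A_n$ with $A_n\in\mathcal{U}\cap M$), $\delta\in A_M$ by the choice of $M$, and $\sup_n\max c^{q_n}=\sup_n\nu_n=\delta$. Since $q_\omega$ forces $\gamma_n\in\name{C}$ with $\nu_n<\gamma_n<\delta$ for all $n$, it forces $\name{C}\cap\delta$ unbounded in $\delta$, hence $\delta\in\name{C}$; as $\delta\in B'$, $q_\omega\Vdash\name{C}\cap\check{B'}\neq\emptyset$.

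Combining the two stages, $B'$ is stationary in the generic extension by $\po_E*\co_{\bar{\mathcal{U}}}$. The main obstacle is Stage 1: one must extract from the strong Prikry property of $\po_E$, in the precise form of Lemma~\ref{lem:extender.properness}, a single master condition $p^*$ that simultaneously forces $\name{C}$ to accumulate at the chosen point $\delta\in B'$. The subtle point is that the reflected witnesses must stay below $\delta$; this is exactly what is guaranteed by taking $M$ with $M\cap\lambda=\delta$ and $M^{<\delta}\subseteq M$ (so that $f^*$ has domain exactly $\delta$ and all one-point-extension data lies in $M$). Stage 2, by contrast, uses only the $\lambda$-completeness of the extended filter together with the fact that $\co_{\bar{\mathcal{U}}}$ diagonalizes precisely the sets of $\bar{\mathcal{U}}$, and is a routine adaptation of the distributivity argument.
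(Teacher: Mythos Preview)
Your two–stage decomposition is a natural idea and Stage~1 is fine: the single model $M$ with $M^{<\delta}\subseteq M$ and the master condition coming from Lemma~\ref{lem:extender.properness} do force the club to accumulate at $\delta\in B'$, exactly because all witnesses live in $M$.

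The difficulty is in Stage~2. You need, in $V[\po_E]$, an elementary $M\prec H_\chi^{V[\po_E]}$ with $M\cap\lambda=\delta\in B'$ \emph{and} $\delta\in\bigcap(\mathcal U\cap M)$. You justify this by pointing to the distributivity proof, but there the chain is built in $V$ and the condition $\delta_{i+1}\in\bigcap(\mathcal U\cap M_{i+1})$ refers to the $V$-model $M_{i+1}$. If you lift such an $M^V$ to $M=M^V[G]\prec H_\chi^{V[G]}$ via a master condition, you keep $M\cap\lambda=\delta$, but you have no control over $\mathcal U\cap M$: the generic may well place into $M$ some $A\in\mathcal U$ with $\delta\notin A$, since $M\cap V$ can be strictly larger than $M^V$. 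Your later argument uses precisely this property (to get $\delta\in B^{q_n}$ and $A_M\subseteq B^{q_n}$), so the gap is genuine. One could try to repair it by proving that the filter generated by $\mathcal U$ in $V[\po_E]$ is normal and then running a diagonal-intersection argument over a continuous chain in $V[\po_E]$, but that is an extra lemma you have not supplied, and it is not the ``standard argument'' you cite.

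The paper sidesteps this entirely by \emph{not} separating the two steps. It fixes in $V$ a single chain $\langle M_i\mid i<\delta\rangle$ with $\delta\in B'$ and $\delta_{i+1}\in\bigcap(\mathcal U\cap M_{i+1})$, builds the $\po_E$-master condition $p^*$ from the whole chain, and then, working in $V[G]$ with $p^*\in G$ (where $\cf\delta=\omega$), constructs the $\co_{\bar{\mathcal U}}$-conditions $q_n$ as \emph{names} evaluated over antichains lying in the $M_\xi$, with $\max c^{q_n}=\delta_{n+1}$. The point is that the large sets $B^{q_n}$ are then witnessed by members of $\mathcal U\cap M_\xi$ for ground-model $M_\xi$, so $\delta\in B^{q_n}$ is automatic and the upper bound $q_\omega$ exists. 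In effect the paper interleaves your two stages using a single system of $V$-models; your modular presentation forces you to relocate that system inside $V[\po_E]$, which is where the trouble arises.
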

\begin{proof}
Let $\name{C}$ be a name for a club. We show that every  condition $q \in \co_{\bar{\U}}$ has an extension which forces that $\name{C} \cap B' \neq \emptyset$. 
Working in $V$, 
let $M \prec H_\chi$, such that $M \cap \lambda = \delta$, $\po_E, \co_{\bar{\mathcal{U}}}, \name{C}, q,B' \in M$, and $\delta \in B'$ is inaccessible. Moreover, let us assume that $M$ is obtained as a union of a chain of models of length $\delta$, $M_i$, such that $M_i \cap \lambda = \delta_i$ and $M_{i+1}^{<\delta_{i+1}} \subseteq M_{i+1}$ and $\delta_{i+1} \in \bigcap (\mathcal{U} \cap M_{i+1})$. 

For each $i$, let $f^*_i$ be $M_i$-generic for $\po_E^*$, such that $f_i^* \subseteq f_j^*$ for $i < j$. Let $f^* = \bigcup f_i^*$.

Let $G \subseteq \po_E$ be a generic filter that contains a condition $p^* = \langle f^*, A\rangle$, $A \subseteq M$. In $V[G]$, $\cf \delta = \omega$. Let $\langle \delta_n \mid n < \omega\rangle$ be a cofinal sequence in $\delta$. For each $n$, for sufficiently large $\xi < \delta$, $M_\xi$ contains the dense set of conditions in $\po_E$ that decide on some condition $q \in \co_{\bar{\mathcal{U}}}$ that forces some ordinal $\gamma_n \geq \delta_n$ to be in $\name{C}$. 

Following the same arguments as in the previous lemma, we can define a condition $\name{q}_n$ by going over some maximal anti-chain. The maximum of the closed set of $\name{q}_n$ is always $\delta_{n + 1}$.
Finally, the sequence of conditions $q_n^G$ has an upper bound, by attaching $\delta$ on top of the union. Let $q_\omega$ be the upper bound. Clearly, $q_\omega$ forces $\delta \in \name{C}$, as wanted. 
\end{proof}

Finally, the following proposition finishes the proof of Theorem \ref{thm:singular.strong.meas}. 

\begin{proposition}
Let $\kappa$ be $\lambda$-supercompact, where $\lambda$ is measurable. Then, there is a generic extension in which $\cf \kappa = \omega$, $\kappa$ is a cardinal, $\lambda = \kappa^+$ and it is $(\left(S_{reg}^\lambda\right)^V, 1)$-strongly measurable cardinal. 
\end{proposition}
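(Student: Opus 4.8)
The plan is to assemble the lemmas of this section: the desired model will be the generic extension $V[G_E\ast C]$ of $V=\HOD^V$ by $\po_E\ast\co_{\bar{\U}}$, where $C=\bigcup\{c\mid\exists B\ (c,B)\in G(\co_{\bar{\U}})\}$ is the generic diagonalizing club. First I would fix the coarse structure of $V[G_E\ast C]$. The forcing $\po_E$ preserves $\lambda$, makes $\cf\kappa=\omega$, forces $\lambda=\kappa^{+}$, and gives every $V$-regular cardinal in $[\kappa,\lambda)$ cofinality $\omega$; since $\co_{\bar{\U}}$ is $\lambda$-distributive over $V[G_E]$ by the lemma above, it adds no new bounded subset of $\lambda$ and preserves all cofinalities $\le\lambda$, so in $V[G_E\ast C]$ we still have $\cf\kappa=\omega$, $\kappa$ a cardinal, and $\lambda=\kappa^{+}$ regular. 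I would also observe that $S:=\left(S_{reg}^\lambda\right)^V\in\U$: the ultrapower $\mathrm{Ult}(V,\U)$ computes $V_\lambda$ correctly and, being contained in $V$, thinks $\lambda$ is inaccessible (it is a cardinal there, regular there, and $(2^\gamma)^{\mathrm{Ult}(V,\U)}\le(2^\gamma)^V<\lambda$ for $\gamma<\lambda$), so $\U$ concentrates on the inaccessible, hence regular, cardinals of $V$ below $\lambda$. Modulo the nonstationary set $S\cap\kappa$, all ordinals in $S$ acquire cofinality $\omega$, so $S$ may be taken to be a subset of $\lambda\cap\cof(\omega)$, which is the form needed for Theorem \ref{thm:singular.strong.meas}.

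Next I would control $\HOD$. By Gitik--Merimovich \cite{GitikMerimovich} the forcing $\po_E$ is weakly homogeneous, hence cone homogeneous, and it is ordinal definable in $V$ from the extender $E\in V=\HOD^V$. The club forcing $\co_{\bar{\U}}$ is cone homogeneous by the argument of Lemma \ref{Lem0:CHomog}; that argument needs only that $\bar{\U}$ is a filter of unbounded subsets of $\lambda$ closed under finite intersections, which holds since $(\Sing\cup A_0)\cap(\Sing\cup A_1)=\Sing\cup(A_0\cap A_1)$ with $A_0\cap A_1\in\U$. Since in addition the canonical $\po_E$-name for $\co_{\bar{\U}}$ is ordinal definable from $\U$, Lemma \ref{corr: OD implies homogeneous} shows that $\po_E\ast\co_{\bar{\U}}\in V=\HOD^V$ is cone homogeneous, and Fact \ref{fact:Levy} yields $\HOD^{V[G_E\ast C]}\subseteq\HOD^V=V$.

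The crux is the dichotomy: for every $X\subseteq S$ with $X\in V$, $X$ is stationary in $V[G_E\ast C]$ if and only if $X\in\U$. The forward implication is exactly Lemma \ref{Lem: preservation of stationarity} applied with $B'=X$. For the converse, if $X\notin\U$ then $A:=\lambda\setminus X\in\U$, so $\Sing\cup A\in\bar{\U}$ and $C\setminus\beta\subseteq\Sing\cup A$ for some $\beta<\lambda$ by genericity; since $X\subseteq S\subseteq\Reg^V$ is disjoint from both $\Sing$ and $A$, the club $C\setminus\beta$ misses $X$, so $X$ is nonstationary. Granting this, suppose $\langle X_0,X_1\rangle\in\HOD^{V[G_E\ast C]}$ were a partition of $S$ into two disjoint sets, each stationary in $V[G_E\ast C]$. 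Since $\HOD^{V[G_E\ast C]}\subseteq V$, both $X_0,X_1\in V$, hence both lie in $\U$ by the dichotomy; but $X_0\cap X_1=\emptyset$ contradicts $\U$ being a filter. So no such partition exists, and as $S\in\U$ is stationary and $(2^2)^{\HOD}=4<\lambda$, this is precisely the assertion that $\lambda$ is strongly measurable in $\HOD$ with respect to $S$, equivalently $(S,1)$-strongly measurable; this also finishes the proof of Theorem \ref{thm:singular.strong.meas}. I expect no real obstacle remains at this stage---the substantive work is in the preceding lemmas---and the only two points needing care, namely that diagonalizing $\bar{\U}$ by a club does not destroy the stationarity of the $\U$-large target sets and that the extra set $\Sing$ in $\bar{\U}$ is harmless because $S$ avoids $V$-singular cardinals, are handled by Lemma \ref{Lem: preservation of stationarity} and the observation $S\cap\Sing=\emptyset$ respectively.
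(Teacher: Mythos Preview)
Your proposal is correct and follows the same approach as the paper: force with $\po_E\ast\co_{\bar{\U}}$, use cone homogeneity of the two-step iteration to get $\HOD\subseteq V$, and invoke the preceding lemmas on distributivity and stationarity preservation. In fact you spell out considerably more than the paper does---the paper's proof is essentially ``the iteration is cone homogeneous, $\po_E$ preserves cardinals below $\kappa$ and $\geq\lambda$, $\co_{\bar{\U}}$ preserves cardinals, and $S^\lambda_{reg}$ is stationary by Lemma~\ref{Lem: preservation of stationarity}''---whereas you make the key dichotomy (a $V$-subset of $S$ is stationary iff it lies in $\U$) and the resulting impossibility of a $\HOD$-partition explicit. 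One small cosmetic point: your labels ``forward'' and ``converse'' for the two directions of the dichotomy are swapped (Lemma~\ref{Lem: preservation of stationarity} gives $X\in\U\Rightarrow X$ stationary, which is the ``if'' direction of your biconditional), but both directions are correctly argued.
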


We can now finish the proof of theorem \ref{thm:singular.strong.meas}.
\begin{proof}[Proof of Theorem \ref{thm:singular.strong.meas}]
The iteration $\po_E * \co_{\bar{\mathcal{U}}}$ is cone homogeneous as an iteration of two cone homogeneous, ordinal definable forcing notions. Since $\po_E$ preserves cardinals below $\kappa$ and $\geq\lambda$ and $\co_{\bar{\mathcal{U}}}$ preserves cardinals, the result follows. The set $S_{reg}^\lambda$ is stationary by Lemma \ref{Lem: preservation of stationarity}.
\end{proof}

The result that we obtain for the successor of a singular cardinal is weaker than the result for a successor of a regular cardinal. The reason is that in order to get the closed unbounded filter to be sets from the intersection of some ground model normal measures we will have to obtain a situation in which the regular cardinals between the supercompact cardinal $\kappa$ and the measurable cardinal $\lambda$ are going to change cofinalities into values which differ from the cofinality of $\kappa$ in the generic extension. This is also the reason that such a method cannot work for getting $\omega$-strongly measurable successor of a singular cardinal of uncountable cofinality.

We remark that Woodin in \cite{Woodin-SuitableExtendersI}, proved that it is consistent relative to the large cardinal axiom $I_0$ that a successor of a singular cardinal is $\omega$-strongly measurable.
\begin{question}
Is it consistent that there is an $\omega$-strongly measurable cardinal $\lambda^+$, where $\cf \lambda > \omega$ is a limit cardinal?
\end{question}
\begin{question}
Is it consistent that there is a cardinal $\lambda^+$, where $\cf \lambda > \omega$ is a limit cardinal and $\left(S^{\lambda^{+}}_{reg}\right)^{\HOD}$ contains a club in $V$?
\end{question}

\section{Appendix - Homogeneity}\label{appendix:homogeneity}
In this section we review some basic facts related to homogeneity and develop some basic tools in order to preserve homogeneity of iterations of Prikry type forcings.
\subsection{Homogeneity and $\HOD$}\label{Sec:homog}
When dealing with $\HOD$, we would like to modify the universe (via forcing) while not adding objects to $\HOD$. The main method to obtain this is to force with posets which satisfy certain weak homogeneity property. The main results of this work will focus on the notion of cone homogeneous posets. 

\begin{definition}\label{Def:ConeHom}
 We say that a poset $\po$ is \textbf{cone homogeneous} if for every $p,q \in \po$ there are extensions $p^*,q^*$ of $p,q$ respectively, and a forcing isomorphism $\varphi$ from the cone $\po/p^*$ (i.e., of conditions extending $p^*$) to the cone $\po/q^*$.
\end{definition}

This notion can also be found under different names in the literature concerning weak forms of homogeneity. Our terminology follow Dorbinen and Friedman, \cite{DrobinenFriedman}, for the most part. It is easy to see that cone homogeneous posets satisfy most standard properties of homogeneous posets concerning ordinal definability sets. In particular, the following well-known result holds.

\begin{fact}[Levy, \cite{Levy}]\label{fact:Levy}
If $\po$ is cone homogeneous and belongs to $\HOD$, and $G \subseteq \po$ is generic over $V$ then $\HOD^{V[G]} \subseteq \HOD^V$. 
\end{fact}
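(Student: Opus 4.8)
The plan is to reduce the statement to a ``homogeneity of the forcing relation'' lemma and then run an $\in$-induction; this is essentially L\'evy's classical argument for weakly homogeneous forcing, adapted to cones. \emph{Step 1 (the forcing relation is homogeneous).} I would first show that if $\po$ is cone homogeneous, then for every formula $\varphi$ and all $a_1,\dots,a_n \in V$, the weakest condition $\mathbb{1}_{\po}$ decides $\varphi(\check a_1,\dots,\check a_n)$. Suppose not, so there are $p,q \in \po$ with $p \force \varphi(\check a_1,\dots,\check a_n)$ and $q \force \lnot\varphi(\check a_1,\dots,\check a_n)$. By cone homogeneity fix $p^* \geq p$, $q^* \geq q$ and a forcing isomorphism $\pi \colon \po/p^* \to \po/q^*$. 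Such a $\pi$ sends the maximum $p^*$ of $\po/p^*$ to the maximum $q^*$ of $\po/q^*$, lifts to an isomorphism of the Boolean completions and hence to a bijection of names, and fixes canonical names ($\pi(\check a)=\check a$, by induction on the rank of $a$); consequently $r \force_{\po/p^*} \varphi(\check a_1,\dots,\check a_n)$ iff $\pi(r) \force_{\po/q^*} \varphi(\check a_1,\dots,\check a_n)$. Since forcing with $\po$ below a condition coincides with forcing over the corresponding cone, from $p^* \geq p$ we get $p^* \force_{\po/p^*} \varphi(\check a_1,\dots,\check a_n)$, hence $q^* = \pi(p^*) \force_{\po/q^*} \varphi(\check a_1,\dots,\check a_n)$, hence $q^* \force_{\po} \varphi(\check a_1,\dots,\check a_n)$ --- contradicting $q^* \geq q \force_{\po} \lnot\varphi(\check a_1,\dots,\check a_n)$.

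\emph{Step 2 ($\in$-induction).} Now let $x \in \HOD^{V[G]}$; I would prove $x \in \HOD^{V}$ by induction on $\in$, assuming that every element of $x$ already lies in $\HOD^{V}$ (legitimate because $\HOD^{V[G]}$ is transitive, so the hypothesis applies to the members of $x$). Fix a formula $\psi$ and ordinals $\bar\beta$ with $x = \{z : \psi(z,\bar\beta)\}^{V[G]}$. By the induction hypothesis $x \subseteq \HOD^{V} \subseteq V$, so for $y \in V$ we have $y \in x$ iff $V[G] \models \psi(\check y[G],\check{\bar\beta}[G])$, which by Step 1 holds iff $\mathbb{1}_{\po} \force_{\po} \psi(\check y,\check{\bar\beta})$ --- a condition on $y$ that does not mention $G$. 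Hence, choosing $\gamma$ with $x \subseteq V_\gamma$, we get $x = \{y \in V_\gamma : \mathbb{1}_{\po}\force_{\po}\psi(\check y,\check{\bar\beta})\}$, which is a set of $V$ by Separation (the forcing relation is definable in $V$), so $x \in V$; and since $\po \in \HOD^{V}$, so that $\po$, $\mathbb{1}_{\po}$ and $\force_{\po}$ are ordinal definable in $V$, and the remaining parameters $\bar\beta$ are ordinals, $x$ is ordinal definable in $V$. Together with the induction hypothesis this shows every element of the transitive closure of $\{x\}$ is ordinal definable in $V$, i.e.\ $x \in \HOD^{V}$.

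\emph{Main obstacle.} The only delicate point is the bookkeeping in Step 1: verifying that a bare poset isomorphism of two cones lifts to an isomorphism of the associated Boolean-valued structures which commutes with $\force$ and fixes check-names, and that forcing with $\po$ below a condition is the same as forcing over the cone. Everything else is routine, and one could alternatively simply invoke L\'evy's theorem in the form stated in \cite{DrobinenFriedman}.
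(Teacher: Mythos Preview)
Your proof is correct and follows the classical L\'evy argument. Note, however, that the paper does not actually supply a proof of this fact: it is stated as a cited result (attributed to L\'evy) with no argument given, so there is nothing in the paper to compare your approach against. Your write-up is exactly the standard proof one would give if asked to fill in the details.
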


If $\varphi$ is an isomorphism of two cones $\po /p_0$ and $\po/p_1$ and $\sigma$ is a $\po /p_0$ name, then by recursively applying $\varphi$ we obtain a $\po /p_1$-name, which we denote by $\sigma^\varphi$.

Let $\po = \po_\kappa$ where $\l \po_\alpha,\qo_\alpha \mid \alpha < \kappa\r$ is an iteration of cone homogeneous posets $\qo_\alpha$ and moreover let us assume that all cone automorphisms of $\po_\alpha$ do not modify $\qo_\alpha$ as a poset. For simplicity, we may assume that $\qo_\alpha$ and its order are ordinal definable.

Given two conditions $\vec{p} = \l p_\alpha \mid \alpha < \kappa\r, 
 \vec{q} = \l q_\alpha \mid \alpha < \kappa\r$ in $\po$, it is natural to try forming extensions $\vec{p^*} = \l p^*_\alpha \mid \alpha < \kappa \r \geq \vec{p}$, $\vec{q^*} = \l q^*_\alpha \mid \alpha < \kappa \r \geq \vec{q}$, and an isomorphism $\varphi\colon \po/\vec{p^*} \to \po/\vec{q^*}$ as follows:

 By induction on $\beta \leq \kappa$, we attempt defining extensions $\vec{p}^\beta = \l p^*_\alpha \mid \alpha < \beta\r $ of  $\vec{p}\uhr \beta$, and $\vec{q}^\beta = \l q^*_\alpha \mid \alpha < \beta\r$ of $\vec{q}\uhr \beta$, and an isomorphism $\varphi_\beta \colon \po_\beta/\vec{p}^\beta \to \po_\beta/\vec{q}^\beta$. 
 Our inductive assumptions further include $\vec{p}^{\beta_1}\uhr \beta_0 = \vec{p}^{\beta_0}$, 
 $\vec{q}^{\beta_1}\uhr \beta_0 = \vec{q}^{\beta_0}$, and $\varphi_{\beta_1} \uhr \po_{\beta_0}/\vec{p}^{\beta_0} = \varphi_{\beta_0}$,\footnote{i.e., the restriction 
 $\varphi_{\beta_1} \uhr \po_{\beta_0}/\vec{p}^{\beta_0}$ is obtained by identifying conditions $\vec{r}^{\beta_0} \in \po_{\beta_0}/\vec{p}^{\beta_0}$ with their extension 
 $\vec{r}^{\beta_1} = \vec{r}^{\beta_0} \fr (\vec{p}^{\beta_1}\uhr_{[\beta_0,\beta_1)})$.}
 for all $\beta_0 < \beta_1$.
 
For $\beta=0$, where $\po_0 = \{ 0_{\po_0}\}$ is a trivial forcing we take $\varphi_0$ to be the identity. 
At a successor step, assuming $\vec{p}^\beta,\vec{q}^\beta$ and $\varphi_\beta$ have been defined, we have that $\varphi_\beta(\vec{p}^\beta) = \vec{q}^\beta$ forces that
$p_\beta^{\varphi_\beta}$ and $q_\beta$ are conditions of the cone homogeneous poset $\qo_\beta$. There are therefore $\po_\beta$-names 
$p'_\beta$ and $q'_\beta$ of extensions of $p_\beta^{\varphi_\beta}$ and $q_\beta$, respectively, and a name of a cone isomorphism 
$\psi_\beta \colon \qo_\beta/p'_\beta \to \qo_\beta/q'_\beta$. We stress that we use the maximality principle, and do not extend the conditions $\vec{p}^\beta$ and $\vec{q}^\beta$   in order to determine the values of $p'_\beta, q'_\beta$ and $\psi_\beta$.

Let $p^*_\beta = (p'_\beta)^{\varphi_\beta}$ and $q^*_\beta = q'_\beta$. Clearly, $\vec{p}^\beta$,$\vec{q}^\beta$ force that $p^*_\beta,q^*_\beta$ extend $p_\beta, q_\beta$, respectively. We set $\vec{p}^{\beta+1} = \vec{p}^\beta \fr \l p^*_\beta\r$, $\vec{q}^{\beta+1} = \vec{q}^\beta \fr \l q^*_\beta\r$, and define 
$\varphi_{\beta+1}\colon \po_{\beta+1}/\vec{p}^{\beta+1} \to \po_{\beta+1}/\vec{q}^{\beta+1}$ by mapping a condition 
$\vec{r} = \vec{r}\uhr\beta \fr \l r_\beta\r \in \po_{\beta+1}/\vec{p}^{\beta+1}$ to 
\[
\varphi_{\beta+1}( \vec{r}) = \varphi_\beta(\vec{r}\uhr\beta) \fr \l \psi_\beta(r_\beta^{\varphi_\beta})  \r.
\]
It is immediate from our assumption of $\varphi_{\beta}$ and choice of $\psi_\beta$ that $\varphi_{\beta+1}$ is an isomorphism.
Finally, for a limit ordinal $\delta \leq \kappa$, $\vec{p}^{\delta}$ (similarly  $\vec{q}^\delta$) is determined by the requirement 
$\vec{p}^{\delta}\uhr\beta = \vec{p}^\beta$ for all $\beta < \delta$ (similarly for $\vec{q}^{\delta}$), 
and $\varphi_\delta$ by the requirement $\varphi_\delta\uhr \po_\beta/\vec{p}^\beta = \varphi_\beta$ for all $\beta < \delta$. See \cite{DrobinenFriedman} for more detailed proof for the validity of this construction. \vskip\medskipamount

We conclude that for this construction to succeed the following conditions need to hold for all $\beta\leq \kappa$:
(i) $\vec{p}^\beta,\vec{q}^\beta$ are well-defined conditions in $\po_\beta$ which extend $\vec{p}\uhr\beta,\vec{q}\uhr\beta$ respectively, 
and (ii) $\varphi_\beta$ is a well-defined cone isomorphism. \vskip\smallskipamount
If the construction succeeds throughout all stages $\beta \leq \kappa$, then the final conditions $\vec{p^*} = \vec{p}^\kappa$, 
$\vec{q^*} = \vec{q}^\kappa$ and cone isomorphism $\varphi = \varphi_\kappa$, satisfy the required properties. 
It is easy to see that condition (i) and (ii) may only fail at limit stages $\delta \leq \kappa$, where the precise formation of the iteration (e.g., its support) 
may prevent $\vec{p}^\delta$ to be a condition in $\po_\delta$. 
Similarly, the definition of the limit order $\leq_{\po_\delta}$ might prevent the defined map $\varphi_\delta$ to be an isomorphism.

This problem does not occur for finite iteration:
\begin{lemma}[{\cite{DrobinenFriedman}}]\label{corr: OD implies homogeneous}
A finite iteration of ordinal definable cone homogeneous forcings is cone homogeneous. 
\end{lemma}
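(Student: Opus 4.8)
The plan is to run the inductive construction spelled out just before the lemma, observing that for a finite iteration the only stages at which it can fail --- the limit stages --- simply do not arise. Write $\po_n = \po_\kappa$ with $\kappa = n < \omega$, where $\langle \po_\alpha, \qo_\alpha \mid \alpha < n\rangle$ is the iteration and each $\qo_\alpha$ is forced to be cone homogeneous and, by hypothesis, ordinal definable together with its order. Fix arbitrary $\vec p, \vec q \in \po_n$; we must produce extensions $\vec{p^*} \geq \vec p$, $\vec{q^*} \geq \vec q$ and a forcing isomorphism $\varphi \colon \po_n/\vec{p^*} \to \po_n/\vec{q^*}$.

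First I would carry out the induction on $\beta \leq n$, building coherent $\vec p^\beta \geq \vec p \uhr \beta$, $\vec q^\beta \geq \vec q\uhr\beta$ and isomorphisms $\varphi_\beta \colon \po_\beta/\vec p^\beta \to \po_\beta/\vec q^\beta$ exactly as above: at $\beta = 0$ the poset $\po_0$ is trivial and $\varphi_0$ is the identity; at a successor step $\beta \to \beta+1$, ordinal definability of $\qo_\beta$ ensures that $\vec q^\beta = \varphi_\beta(\vec p^\beta)$ forces $\qo_\beta^{\varphi_\beta} = \qo_\beta$, so that $p_\beta^{\varphi_\beta}$ and $q_\beta$ are both names for conditions in the cone homogeneous poset $\qo_\beta$, and invoking the maximality principle one chooses $\po_\beta$-names $p'_\beta, q'_\beta$ for extensions of them together with a name $\psi_\beta$ for a cone isomorphism $\qo_\beta/p'_\beta \to \qo_\beta/q'_\beta$ without further extending $\vec p^\beta, \vec q^\beta$; then $p^*_\beta = (p'_\beta)^{\varphi_\beta}$, $q^*_\beta = q'_\beta$, $\vec p^{\beta+1} = \vec p^\beta \fr \langle p^*_\beta\rangle$, $\vec q^{\beta+1} = \vec q^\beta \fr \langle q^*_\beta\rangle$, and $\varphi_{\beta+1}(\vec r) = \varphi_\beta(\vec r\uhr\beta) \fr \langle \psi_\beta(r_\beta^{\varphi_\beta})\rangle$ is readily checked to be an isomorphism of the two cones extending $\varphi_\beta$.

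The crucial observation --- and the reason the lemma holds --- is that since $n$ is finite the induction never meets a limit ordinal, so conditions (i) and (ii) from the discussion above, which can only fail at limit stages (where the support or the limit order of the iteration might not cooperate), hold at every stage. Hence the construction runs all the way to $\beta = n$, and $\vec{p^*} := \vec p^n$, $\vec{q^*} := \vec q^n$, $\varphi := \varphi_n$ are as required, witnessing cone homogeneity of $\po_n$. I do not expect a genuine obstacle here; the one point to keep in view is precisely the use of ordinal definability of each $\qo_\alpha$ at the successor steps, which guarantees that the partial isomorphism $\varphi_\beta$ leaves the name $\qo_\beta$ untouched, so that $p_\beta^{\varphi_\beta}$ and $q_\beta$ genuinely inhabit the same poset and admit a cone isomorphism between suitable extensions. (An alternative, fully self-contained route is to induct directly on $n$: write $\po_n = \po_{n-1} * \qo_{n-1}$, apply the inductive hypothesis to $\po_{n-1}$ and then the forced cone homogeneity of $\qo_{n-1}$ in the extension, composing the two isomorphisms --- the same argument repackaged, with ordinal definability again used so that the isomorphism of the first factor does not disturb $\qo_{n-1}$.)
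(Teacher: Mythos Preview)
Your proposal is correct and is precisely the argument the paper intends: the paper sets up the general inductive construction, observes that conditions (i) and (ii) can only fail at limit stages, and then states the lemma as the immediate consequence that for a finite iteration no such stages occur. Your write-up simply unpacks this observation (and your parenthetical two-step-iteration repackaging is an equivalent reformulation), so there is nothing to add.
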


Since our proof of theorem \ref{THM1} is based on a construction of a \textbf{Magidor Iteration} $\po = \l \po_\alpha,\qo_\alpha \mid \alpha < \theta\r$ of Prikry-type forcings $(\qo_\alpha,\leq_{\qo_\alpha},\leq^*_{\qo_\alpha})$, we conclude this section with a description of a specific variant of cone homogeneity for the posets $\qo_\alpha$, which guarantees that the Magidor iteration $\po$ is cone-homogeneous as well. 

\begin{definition}[{Prikry type forcing, \cite{Gitik-HB}}]
$\langle \mathbb{P}, \leq, \leq^*\rangle$ is a Prikry type forcing if 
\begin{itemize}
\item $\leq \supseteq \leq^*$ are partial orders on $\mathbb{P}$ and 
\item (the Prikry Property) for every statement $\sigma$ in the forcing language for $\langle \mathbb{P}, \leq\rangle$, and a condition $p$ there is a condition $p^*$, $p \leq^* p^*$ such that $p^* \Vdash \sigma$ or $p^* \Vdash \neg \sigma$.
\end{itemize}
\end{definition}

Conditions in the Magidor iteration $\po = \l \po_\alpha,\qo_\alpha \mid \alpha < \kappa\r$ of Prikry type posets $\l \qo_\alpha, \leq_{\qo_\alpha},\leq^*_{\qo_\alpha}\r$  are sequences $\vec{p} = \l p_\alpha \mid \alpha < \kappa\r$ which beyond the standard requirement of $p\uhr\alpha \Vdash p_\alpha \in \qo_\alpha$, also satisfy that for all but finitely many ordinals $\alpha < \kappa$, $\vec{p} \uhr \alpha \Vdash p_\alpha \geq^*_{\qo_\alpha} 0_{\qo_\alpha}$. We note that in particular, the definition allows using full-support condition, as long as almost all components $p_\alpha$ are direct extensions of the trivial conditions.  Similarly for the definition of the ordering $\leq_{\po}$, we have that $\vec{p'} \geq \vec{p}$  requires 
both that $\vec{p'}\uhr\alpha \Vdash p'_\alpha \geq_{\qo_\alpha} p_\alpha$ for all $\alpha$ and that 
 for all but finitely many ordinals $\alpha < \kappa$, $\vec{p'}\uhr \alpha \Vdash p'_\alpha \geq^* p_\alpha$.
 See \cite{Gitik-HB} for a comprehensive description of the Magidor iteration style and its main properties.

\begin{lemma}\label{FACT:homogiter}
 Suppose that $\po = \l \po_\alpha,\qo_\alpha \mid \alpha < \kappa\r$ is a Magidor iteration of Prikry-type posets $\l \qo_\alpha, \leq_{\qo_\alpha},
 \leq^*_{\qo_\alpha}\r$ so that the following conditions hold for each $\alpha < \kappa$:
 \begin{itemize}
  \item[(i)]$\qo_\alpha$, $\leq_{\qo_\alpha}$ and $\leq^*_{\qo_\alpha}$ are ordinal definable in $V$, and
  \item[(ii)] it is forced by $0_{\po_{\alpha}}$ that for every two conditions $p,q \in \qo_\alpha$ there are $p^* \geq_{\qo_\alpha}^* p$ and 
  $q^* \geq^*_{\qo_\alpha} q$ and a cone isomorphism $\psi_\alpha\colon \qo_\alpha/p^* \to \qo_\alpha/q^*$ which respects the direct extension order $\leq^*_{\qo_\alpha}$.
 \end{itemize}
Then $\po$ is cone homogeneous. 
\end{lemma}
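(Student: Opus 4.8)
The plan is to run the recursive construction set out above in the preliminaries, inserting the adjustments demanded by the Prikry/Magidor framework, and then to verify that the construction does not break at limit stages. Fix conditions $\vec p = \l p_\alpha \mid \alpha < \kappa\r$ and $\vec q = \l q_\alpha \mid \alpha < \kappa\r$ in $\po$. By induction on $\beta \leq \kappa$ I build extensions $\vec p^\beta = \l p^*_\alpha \mid \alpha < \beta\r \geq \vec p\uhr\beta$ and $\vec q^\beta = \l q^*_\alpha \mid \alpha < \beta\r \geq \vec q\uhr\beta$ in $\po_\beta$, together with a cone isomorphism $\varphi_\beta \colon \po_\beta/\vec p^\beta \to \po_\beta/\vec q^\beta$, all coherent in $\beta$ (so $\vec p^{\beta_1}\uhr\beta_0 = \vec p^{\beta_0}$, $\varphi_{\beta_1}\uhr \po_{\beta_0}/\vec p^{\beta_0} = \varphi_{\beta_0}$, and similarly on the $\vec q$-side), and I carry the additional inductive hypothesis that each $\varphi_\beta$ also respects the iterated direct extension order $\leq^*_{\po_\beta}$. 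Hypothesis (i) is precisely what licenses this scheme: since $\qo_\alpha$, $\leq_{\qo_\alpha}$ and $\leq^*_{\qo_\alpha}$ are ordinal definable, each cone isomorphism $\varphi_\alpha$, when applied to names, fixes $\qo_\alpha$, both of its orders, and the condition $0_{\qo_\alpha}$, exactly as the general construction assumes about cone automorphisms of $\po_\alpha$.

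At a successor step $\beta+1$, the condition $\varphi_\beta(\vec p^\beta) = \vec q^\beta$ forces $p_\beta^{\varphi_\beta}$ and $q_\beta$ into $\qo_\beta$. Using hypothesis (ii) together with the maximality principle — and, crucially, \emph{without} extending $\vec p^\beta$ or $\vec q^\beta$ — fix names $p'_\beta \geq^*_{\qo_\beta} p_\beta^{\varphi_\beta}$, $q'_\beta \geq^*_{\qo_\beta} q_\beta$ and a name for a cone isomorphism $\psi_\beta \colon \qo_\beta/p'_\beta \to \qo_\beta/q'_\beta$ that respects $\leq^*_{\qo_\beta}$. Set $p^*_\beta = (p'_\beta)^{\varphi_\beta}$, $q^*_\beta = q'_\beta$, append these coordinates, and define $\varphi_{\beta+1}(\vec r) = \varphi_\beta(\vec r\uhr\beta)\fr\l\psi_\beta(r_\beta^{\varphi_\beta})\r$ exactly as in the general construction. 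That $\varphi_{\beta+1}$ is an isomorphism of the cones for $\leq_{\po_{\beta+1}}$ is immediate; it respects $\leq^*_{\po_{\beta+1}}$ by the inductive hypothesis on $\varphi_\beta$ and the choice of $\psi_\beta$ as a $\leq^*_{\qo_\beta}$-respecting map.

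The heart of the argument is the limit stage $\delta \leq \kappa$, which — as noted after the general construction — is the only place where conditions (i) and (ii) of the scheme can fail, and this is exactly where the Magidor structure is used. Let $\vec p^\delta$, $\vec q^\delta$, $\varphi_\delta$ be determined by coherence. First, $\vec p^\delta$ is a genuine condition of $\po_\delta$: since $\vec p \in \po$, the set $F = \{\alpha < \kappa \mid \neg(\vec p\uhr\alpha \force p_\alpha \geq^*_{\qo_\alpha} 0_{\qo_\alpha})\}$ is finite, and for every $\alpha \notin F$ with $\alpha < \delta$ one has $p'_\alpha \geq^*_{\qo_\alpha} p_\alpha^{\varphi_\alpha} \geq^*_{\qo_\alpha} 0_{\qo_\alpha}$ (the second inequality being the translation of $p_\alpha \geq^*_{\qo_\alpha} 0_{\qo_\alpha}$ under the $\leq^*$- and $0$-preserving $\varphi_\alpha$), so transporting back, $\vec p^\delta\uhr\alpha \force p^*_\alpha \geq^*_{\qo_\alpha} 0_{\qo_\alpha}$; thus the non-direct-extension set of $\vec p^\delta$ is contained in $F$, and $\vec p^\delta \in \po_\delta$ (likewise $\vec q^\delta \in \po_\delta$). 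Second, $\varphi_\delta$ maps $\po_\delta/\vec p^\delta$ into $\po_\delta/\vec q^\delta$: if $\vec r \geq \vec p^\delta$ in $\po_\delta$, then $r_\alpha \geq^*_{\qo_\alpha} p^*_\alpha \geq^*_{\qo_\alpha} 0_{\qo_\alpha}$ for all but finitely many $\alpha < \delta$, and since each $\psi_\alpha$ respects $\leq^*_{\qo_\alpha}$, also $\psi_\alpha(r_\alpha^{\varphi_\alpha}) \geq^*_{\qo_\alpha} q^*_\alpha \geq^*_{\qo_\alpha} 0_{\qo_\alpha}$ off a finite set — so $\varphi_\delta(\vec r)$ is again a legitimate Magidor condition extending $\vec q^\delta$. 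That $\varphi_\delta$ is a bijection preserving $\leq_{\po_\delta}$ (and $\leq^*_{\po_\delta}$) in both directions is the routine coherence computation of \cite{DrobinenFriedman}, carried out coordinate by coordinate. Taking $\delta = \kappa$ gives $\vec{p^*} = \vec p^\kappa \geq \vec p$, $\vec{q^*} = \vec q^\kappa \geq \vec q$ and the cone isomorphism $\varphi = \varphi_\kappa \colon \po/\vec{p^*} \to \po/\vec{q^*}$, so $\po$ is cone homogeneous.

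The main obstacle is precisely this limit bookkeeping: one must guarantee that the extensions produced coordinatewise never enlarge the finite non-direct-extension support of a Magidor condition, even though the component isomorphisms $\psi_\alpha$ are not assumed to fix the trivial condition. This is why hypothesis (ii) insists that $p^*$ and $q^*$ be \emph{direct} extensions of $p$ and $q$ and that $\psi_\alpha$ respect $\leq^*_{\qo_\alpha}$; these two features are exactly what propagate finiteness of the support through the induction and allow the full-support construction to close up at limits, where the finite-iteration argument of Lemma \ref{corr: OD implies homogeneous} — or a finite/countable support homogeneity argument — would not suffice.
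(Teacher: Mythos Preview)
Your argument is correct and follows essentially the same route as the paper's proof: run the generic recursive construction from the preliminaries, use hypothesis (ii) at successor steps to choose $p'_\beta, q'_\beta$ as \emph{direct} extensions together with a $\leq^*$-preserving $\psi_\beta$, and use hypothesis (i) to ensure that applying $\varphi_\beta^{\pm 1}$ to names preserves the (ordinal-definable) relations $\leq_{\qo_\beta}, \leq^*_{\qo_\beta}$ and the trivial condition, so that finiteness of the non-direct-extension support survives to limits. The paper phrases the conclusion slightly differently---it emphasises the coordinatewise preservation of $\leq^*_{\qo_\beta}$ under the maps $r_\beta \mapsto \psi_\beta(r_\beta^{\varphi_\beta})$ and then reads off both that $\vec p^\alpha, \vec q^\alpha$ are legitimate Magidor conditions and that $\varphi_\alpha$ is order-preserving---but the content is the same.
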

\begin{proof}
Let $\vec{p},\vec{q} \in \po$, and $(\vec{p}^\beta,\vec{q}^\beta,\varphi_\beta \mid \beta \leq \kappa)$ be the sequence obtained form the procedure described above. 
 It suffices to verify inductively, that conditions (i) and (ii) are satisfied by the sequence. 
 
 We note that in the successor step construction of $p'_\beta$, $p^*_\beta = (p'_\beta)^{\varphi_\beta^{-1}}$,  $q^*_\beta = q'_\beta$, and $\psi_\beta$,
 we may assume that $\varphi_\beta(\vec{p}^\beta) \Vdash p'_\beta \geq^*_{\qo_\beta} p_\beta^{\varphi_\beta}$, $\vec{q}^\beta\Vdash q'_\beta \geq^* q_\beta$, and that $\psi_\beta$ is $\leq^*_{\qo_\beta}$-preserving. Since $\leq^*_{\qo_\beta}$ is ordinal definable in $V$, $0_{\mathbb{P}_{\beta}} \Vdash \leq^*_{\qo_\beta} = (\leq^*_{\qo_\beta})^{\varphi_\beta^{-1}}$, and therefore by applying the automorphism $\varphi_\beta^{-1}$ we get 
 $\vec{p}^\beta \Vdash p^*_\beta \geq^*_{\qo_\beta} p_\beta$, and $p \mapsto \psi_{\beta}(p^{\varphi_\beta})$ is forced by $\vec{p}^\beta$ to be $\leq^*_{\qo_\beta}$-preserving in the cone below $p^*_\beta$. 
 In particular, assuming $\varphi_\beta$ is order preserving and  $\vec{r}^{\beta+1} \geq \vec{s}^{\beta+1} \in \po_{\beta+1}/\vec{p}^{\beta+1}$, 
 $\vec{r}^{\beta+1} = \vec{r}^\beta \fr \l r_\beta\r$, $\vec{s}^{\beta+1} = \vec{s}^\beta \fr \l s_\beta\r$,
 we have that if $\vec{r}^\beta \Vdash r_\beta \geq^*_{\qo_\beta} s_\beta$ then 
 $\varphi_{\beta}(\vec{r}^\beta) \Vdash \psi_{\beta}(r_\beta^{\varphi_\beta}) \geq^*_{\qo_\beta} \psi_{\beta}(r_\beta^{\varphi_\beta}) = \varphi_{\beta+1}(\vec{s}^{\beta+1})_\beta$.
 The same conclusion holds for $\leq_{\qo_\beta}$ which is also ordinal definable in $V$. 
 \vskip\bigskipamount
 
 We conclude that, first, $p^*_\beta, q_\beta^*$ are forced to be direct extensions of $0_{\qo_\beta}$ whenever $p_\beta,q_\beta$ are, which in turn, implies that 
 $\vec{p}^\alpha,\vec{q}^\alpha$ are conditions of $\po_\alpha$ for all $\alpha \leq \kappa$. Hence, (i) is satisfied. 
 Second, for every $\vec{r}^\alpha,\vec{s}^\alpha \in \po_\alpha/\vec{p}^\alpha$ and $\beta < \alpha$, if 
$\vec{r}^\alpha \uhr \beta \Vdash r_\beta \geq^*_{\qo_\beta} s_\beta$
 then $\varphi_\alpha(\vec{r}^\alpha)\uhr\beta \Vdash \varphi_\alpha(\vec{r}^\alpha)_\beta \geq^*_{\qo_\beta} \varphi_\alpha(\vec{s}^\alpha)_\beta$, and similarly, when replacing replacing $\leq^*_{\qo_\beta}$ with $\leq_{\qo_\beta}$.
 It follows at once from this and the definition of the ordering $\leq_{\po_\alpha}$ of the Magidor iteration that $\varphi_\alpha$ is a cone isomorphism. Hence (ii) holds. 
\end{proof}
\subsection{Homogeneous change of cofinalities}\label{ssec-non-stationary-iteration}

Our approach to construct a model with an $\omega$-strongly measurable cardinal $\kappa$, is to force over a ground model satisfying $V = \HOD$ with a weakly homogeneous poset (i.e., therefore also cone-homogeneous) to form a generic extension $V[G]$ with 
a cardinal $\kappa$, which satisfies the conditions of lemma \ref{lem:sufficient.omega.strongly}.
In light of lemma \ref{lem:necessary.omega.strongly} above, we see that many regular cardinals in $V$ need to change their cofinality in $V[G]$. 
The main challenge in that regard, is to change the cofinality of many cardinals with a weakly homogeneous forcing. 

Fortunately, such forcing has been constructed in \cite{bunger}, where the theory of non-stationary support iteration of Prikry-type forcings is developed, and employed to form a weakly-homogeneous variant of the Gitik iteration (\cite{gitik-nonstionary-ideal}). We note that as opposed to an Easton-style version of the Gitik iteration, which has a good chain condition (i.e., $\kappa$-c.c. when iterating up to a Mahlo cardinal $\kappa$), the non-stationary support variant of \cite{bunger} has a weaker, fusion-type property. \vskip\bigskipamount

We briefly describe the construction of the non-stationary support iteration $\po$ of iteration of Prikry-type forcings $\qo_{\alpha}$ from  \cite{bunger}.
The iteration, which is based on the given coherent sequence of measures $\l U_{\alpha,\tau}\mid \alpha <\kappa, \tau <o^{\U}(\alpha)\r$ is nontrivial at each $\alpha< \kappa$, $o^{\U}(\alpha) > 0$. 
As this $\alpha$, the forcing $\qo_\alpha$ adds a cofinal closed unbounded set $b_{\alpha}$ to $\alpha$ of order-type $\omega^{o(\alpha)}$ (ordinal exponentiation).
More specifically, given a $V$-generic filter $G_\alpha \subseteq\po_\alpha$, which adds clubs $b_\beta$, for $\beta< \alpha$, $o^{\U}(\beta) > 0$, 
one considers finite sequences $t = \l\nu_0,\dots,\nu_{k-1}\r$ with the property that for every $i < k-1$, if $o^{\U}(\nu_i) < o^{\U}(\nu_{i+1})$ then 
$\nu_i \in b_{\nu_{i+1}}$ and $b_{\nu_{i+1}} \cap \nu_i = b_{\nu_i}$. Such sequences are called coherent (with respect to $G_\alpha$). If $\rho$ is an ordinal so that $o(\nu_i) < \rho$ 
for all $i < k$ then we say $t$ is $\rho$-coherent. Otherwise, we denote by $t\uhr \rho$ to be the sub-sequence of $\nu_i \in t$ so that $o(\nu_i) < \rho$. 

Working at $V[G_\alpha]$, one constructs posets $\qo_{\alpha,\tau}$, $\tau \leq o^{\U}(\alpha)$, and simultaneously shows by induction on $\tau \leq o^{\U}(\alpha)$ that 
for each $\tau$-coherent sequence $t$,  $U_{\alpha,\tau} \in \U$ extends to $U_{\alpha,\tau}(t)$.
We define $\qo^0_{\alpha}$ to be the trivial poset, and 
given that the measures $U_{\alpha,\tau'}(t')$, have been defined for every $\tau' < \tau$, and every $\tau'$-coherent sequence $t'$, 
the forcing $\qo^\tau_{\alpha}$ consists of pairs $q = \l t, T\r$ where $t$ is $\tau$-coherent, $T \subseteq[\alpha]^{<\omega}$ is a tree whose stem is $\emptyset$
and for every $s \in T$, $\suc_T(s) := \{ \mu < \kappa \mid s \fr \l \mu \r \in T\} \in \bigcap_{\tau' < \tau} U_{\alpha,\tau'}( t\fr s\uhr\tau')$.
Direct extensions and end extensions of $\qo_{\alpha}^\tau$ are defined as usual. $\qo_{\alpha}^\tau$ is a Prikry type forcing whose direct extension is $\alpha$-closed. 
With $\qo_{\alpha,\tau}$ determined we consider the $V$-ultrapower by $U_{\alpha,\tau}$,
by taking $j_{\alpha,\tau}\colon V \to M_{\alpha,\tau} \cong Ult(V,U_{\alpha,\tau})$, and define for each $\tau$-coherent sequence $t$ a $V[G_\alpha]$ measure
$U_{\alpha,\tau}(t)$ by $X = \name{X}_{G_\alpha} \in U_{\alpha,\tau}(t)$ if there exist $p \in G_\alpha$ and a valid tree $T$ such that 
\[ p \fr \l t,T\r \fr j_{\alpha,\tau}(p)\setminus(\alpha+1) \Vdash_{j_{\alpha,\tau}(\po)} \check{\alpha} \in j_{\alpha,\tau}(\name{X}).\]
\begin{fact}\label{fact:summary-properties-of-PU} ${}$
\vskip\smallskipamount
\begin{enumerate}
\item For each $\alpha$ such that $o^{\mathcal{U}}(\alpha) > 0$, $b_\alpha$ is a cofinal sequence at $\alpha$ of order type $\omega^{o^{\mathcal{U}}(\alpha)}$ (ordinal exponentiation). 
  \item For each $\alpha \leq \kappa$, $(\po_\alpha,\leq,\leq^*)$ is a Prikry-type forcing. 
  \item For every $\gamma < \alpha \leq \kappa$, the quotient $(\po_\alpha/\po_\gamma, \leq,\leq^*)$ is a Prikry-type forcing whose direct extension order $\leq^*$ is $\gamma$-closed. In particular, the quotient $\po_\alpha/\po_\gamma$ does not add new bounded subsets to $\gamma$. 
\item For every $\gamma < \alpha$, the iteration $\mathbb{P}_\alpha / \mathbb{P}_{\gamma+1}$ is weakly homogeneous.
\end{enumerate}
\end{fact}

\section{acknowledgments}
We are grateful to Sandra M\"{u}ller and Grigor Sargsyan for valuable conversations concerning the Inner Model Program and the study of $\HOD$.   
\providecommand{\bysame}{\leavevmode\hbox to3em{\hrulefill}\thinspace}
\providecommand{\MR}{\relax\ifhmode\unskip\space\fi MR }
\providecommand{\MRhref}[2]{%
  \href{http://www.ams.org/mathscinet-getitem?mr=#1}{#2}
}
\providecommand{\href}[2]{#2}

\end{document}